\begin{document}

\title{The Lagrangian Conley Conjecture}%

\author{Marco Mazzucchelli}%

\address{Dipartimento di Matematica, Universit\`a di Pisa, 56127 Pisa, Italy}%
\email{mazzucchelli@mail.dm.unipi.it}%

\subjclass[2000]{37J45, 58E05, 70S05, 53D12}%
\keywords{Lagrangian dynamics, Morse theory, periodic orbits, Conley conjecture.}%

\date{October 12, 2008. \emph{Last update:} November 22, 2010}%
% -----------------------------------------

\maketitle
\begin{abstract}
We prove a Lagrangian analogue of the Conley conjecture: given a 1-periodic Tonelli Lagrangian with global flow on a closed configuration space, the associated Euler-Lagrange system has infinitely many  periodic solutions. More precisely, we show that there exist infinitely many contractible integer periodic solutions with a priori bounded mean action and either infinitely many of them are 1-periodic or they have unbounded period.
\end{abstract}

\begin{quote}
\begin{footnotesize}
\tableofcontents
\end{footnotesize}
\end{quote}

\section{Introduction}
An old problem in classical mechanics is the existence and the number of periodic orbits of a   mechanical system. As it is well known, there are two dual formulations of classical mechanics, namely the Lagrangian one and the Hamiltonian one. If a system is constrained on a manifold, say $M$, then it is described by a (possibly time-dependent) Lagrangian defined on the tangent bundle of $M$ or, dually, by a Hamiltonian defined on the cotangent bundle of $M$.  A classical assumption is that a Lagrangian function $\Lagr:\R\times \Tan M\to\R$ is Tonelli with global flow. This means that $\Lagr$ has fiberwise superlinear growth,  positive definite fiberwise Hessian, and every maximal integral curve of its associated Euler-Lagrange vector field has all of $\R$ as its domain of definition. The Tonelli class is particularly important in Lagrangian dynamics: in fact, whenever a Lagrangian function is fiberwise convex, its Legendre transform defines a diffeomorphism between the tangent and cotangent bundles of the configuration space if and only if the Lagrangian function belongs to the Tonelli class. In other words, the Tonelli Lagrangians constitute the broadest family of fiberwise convex Lagrangian functions for which the Lagrangian-Hamiltonian duality, given by the Legendre transform, occurs. Furthermore,  the Tonelli assumptions imply existence and regularity results for action minimizing orbits joining two given points on the configuration space. For a comprehensive reference on Tonelli Lagrangians, we refer the reader to the forthcoming book by Fathi \cite{b:Fa}.

The problem of the existence of infinitely many periodic orbits, for  Hamiltonian systems on the cotangent bundle of  closed manifolds, is a  non-compact version of the celebrated Conley conjecture \cite{b:Co}, which goes back to the eighties. In its original form, the conjecture states that every Hamiltonian diffeomorphism on the standard symplectic torus $\mathds T^{2n}$ has infinitely many periodic points.
Under a non-degeneracy assumption on the periodic orbits, the conjecture was soon confirmed by Conley and Zehnder \cite{b:CZ2}, and then extended to aspherical closed symplectic manifolds  in 1992 by Salamon and Zehnder \cite{b:SZ}. The full conjecture was  established in 2004 by Hingston \cite{b:Hi}  for the torus case, and in 2006 by Ginzburg \cite{b:Gi} for the aspherical closed case. Other related results are contained in \cite{b:FS, b:GG, b:Gu, b:HZ, b:Sch2, b:Vi2}.

In the Lagrangian formulation, the periodic orbits are extremal points of the Lagrangian action functional, and several sophisticated methods from Morse theory may be applied in order to assert their existence. However, these methods only work for classes of Lagrangians that are significantly smaller than the Tonelli one. Along this line, in 2000  Long \cite{b:Lo} proved the existence of infinitely many periodic orbits for Lagrangian systems associated to fiberwise quadratic Lagrangians on the torus $\T^n$. More precisely, he proved the result for $C^3$ Lagrangian functions $\Lagr:\R/\Z\times\Tan\T^n\to\R$ of the form
\[ \Lagr(t,q,v)=\abra{ A(q)v,v } + V(t,q),\s\s\forall (t,q,v)\in\R/\Z\times\Tan\T^n, \]
where $A:\T^n\to\mathrm{GL}(n)$ takes values in the space of positive definite  symmetric matrices, $\abra{\cdot,\cdot}$ is the standard flat Riemannian metric on $\T^n$ and $V:\R/\Z\times\T^n\to\R$ is a $C^3$  function.  Recently, Lu \cite{b:Lu} extended Long's proof in many directions, in particular to the case of a general closed configuration  space. Other related results concerning autonomous Lagrangian systems are contained in  \cite{b:CT, b:LL, b:LW}.

In 2007, Abbondandolo and Figalli \cite{b:AbFig} showed how to apply some techniques from critical point theory  in the Tonelli setting.  They proved that, on each closed configuration space with finite fundamental group, every  Lagrangian system associated to a 1-periodic Tonelli Lagrangian with global flow admits an infinite sequence of 1-periodic solutions with diverging action. In this paper, inspired by their work and by Long's one, we address the problem of the existence of infinitely many periodic solutions of a 1-periodic Tonelli Lagrangian system on a general closed configuration space. 
   
\subsection{Main result}  Our main result   is the following.

\begin{thm}\label{t:Conley}
Let $M$ be a smooth   closed manifold,  $\Lagr:\R/\Z\times \Tan M\to \R$ a smooth 1-periodic  Tonelli  Lagrangian with global flow and    $\ACT\in\R$ a  constant greater than
\begin{align}\label{e:ACTconstant} 
\max_{q\in M} \gbra{ \int_0^1 \Lagr(t,q,0)\, \diff t}.
\end{align}
Assume that only finitely many contractible  1-periodic  solutions of the Euler-Lagrange system of $\Lagr$ have action less than $\ACT$.  Then, for each prime $p\in\N$, the Euler-Lagrange system of $\Lagr$ admits       infinitely many contractible periodic solutions with period that is a power of $p$ and mean action less than $\ACT$. 
\end{thm}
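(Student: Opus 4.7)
The plan is to adapt the variational strategy of Long and Lu to the full Tonelli setting, building on the finite-dimensional discretization techniques of Abbondandolo-Figalli.

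First I would construct a finite-dimensional reduction of the action functional on contractible $p^k$-periodic loops. For each $k$, partition $[0,p^k]$ into $N_k$ subintervals so fine that minimizing Euler-Lagrange segments between sufficiently close points of $M$ are unique, and that their action depends smoothly on the endpoints. This produces a smooth manifold $\Omega_k$, essentially a copy of $M^{N_k}$, on which a discretized action $\mathcal{A}_{p^k}$ is smooth, whose critical points correspond bijectively to contractible $p^k$-periodic Euler-Lagrange solutions, and whose sublevel sets capture the homotopy type of the sublevel sets of the full action on the free loop space below any prescribed bound. The existence of such a reduction relies on the Tonelli hypothesis together with the global flow assumption.

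Next I would exploit the choice of $K$. The hypothesis $K > \max_q \int_0^1 \mathcal{L}(t,q,0)\,dt$ implies that every constant loop sits strictly below the action threshold $p^k K$, so the inclusion of constants $M \hookrightarrow \{\mathcal{A}_{p^k} < p^k K\}$ injects the full homology of $M$ into the sublevel-set homology at every level $k$. I would then argue by contradiction: assume that for some prime $p$ only finitely many contractible orbits of period a power of $p$ have mean action below $K$. The hypothesis of the theorem furnishes the case $k=0$, so the contradiction hypothesis forces all relevant critical points of $\mathcal{A}_{p^k}$, for every $k$, to be iterates of a finite list of basic orbits. I would compute their local Morse data via Gromoll-Meyer iteration theory transported to the discretized setting, and combine the Morse inequalities with the injectivity of $H_*(M)$ into the sublevel-set homology to produce a contradiction once $k$ is large enough.

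The principal obstacle is precisely this iteration theory for critical groups of Tonelli periodic orbits in the discretized model. Long's work relied on the fiberwise quadratic form of $\mathcal{L}$ to reduce everything to a Hilbert-space problem amenable to Maslov-index computations. In the general Tonelli case one cannot make that reduction, so the challenge is to prove intrinsic iteration formulae for the critical groups on the $k$-dependent finite-dimensional models, and to establish an analogue of the Bangert-Hingston nonvanishing lemma in this setting, ensuring that the local-homology contribution of the finitely many allowed iterates is eventually insufficient to absorb the topology that $M$ injects into the sublevel sets. Carrying out these homological computations, and verifying that they are independent of the choice of discretization under the iteration maps $\Omega_k \to \Omega_{k+1}$, is where I expect the bulk of the technical work to lie.
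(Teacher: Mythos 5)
Your overall plan — discretize, exploit the injection of $H_*(M)$ via constant loops, argue by contradiction using iteration theory of local homology plus a Bangert-type vanishing lemma — is indeed the skeleton of the argument in the paper. But two gaps in the proposal are substantive.

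First, you propose to discretize the Tonelli action directly, attributing a finite-dimensional discretization to Abbondandolo--Figalli. That is not what they prove: their contribution is the \emph{convex quadratic modification}, a device that replaces the Tonelli Lagrangian $\Lagr$ by a convex quadratic-growth Lagrangian $\Lagr_R$ agreeing with $\Lagr$ on $\{|v|_q\le R\}$, so that the action becomes $C^1$ and Palais--Smale on $W^{1,2}$. The broken-Euler--Lagrange discretization in this paper is developed \emph{for convex quadratic-growth Lagrangians}, and, crucially, the Bangert--Klingenberg homological vanishing step (theorem~\ref{t:vanish}) is carried out on the $W^{1,2}$ loop space, not on the finite-dimensional model: the Bangert homotopy pulls segments of the loop along horizontal geodesics, producing curves that are not $k$-broken Euler--Lagrange loops. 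Since the Tonelli action is not even continuous on $W^{1,2}$, you cannot run that homotopy without the modification; and then you must control the $W^{1,\infty}$-size of the homotoped loops \emph{uniformly in the iterate $n$} so that they remain in the region where $\Lagr_R=\Lagr$ (this is the content of remark~\ref{r:BangertBd} and proposition~\ref{p:vanishIndep}, and is what lets the choice of $R$ be made independently of $n$). Your proposal has no mechanism for this.

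Second, your contradiction step is phrased too loosely. ``Combining the Morse inequalities with the injectivity of $H_*(M)$'' only shows (as in the paper's first step) that \emph{some} orbit $\gamma$ with $\aiota(\gamma)=0$ has nontrivial $\MC_N$; it does not by itself get a contradiction, because orbits with mean index zero contribute bounded local homology under all iterates. What closes the argument is the tension between two facts about that specific $\gamma$: (i) along a subsequence of powers of $p$ the index/nullity pair stabilizes, and theorem~\ref{t:Hom*J} (transported through the discretization by proposition~\ref{p:loc_hom_holds_true}) shows the iteration map induces an \emph{isomorphism} of local homology in degree $N$; (ii) since there are no critical values in $(c_1,c_2)$, this local homology injects into $\Hom_N((\act{n}_R)_{c_2},(\act{n}_R)_{c_1})$, but the homological vanishing says that the image must die under sufficiently high iteration. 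This is a statement about one persisting-yet-vanishing class, not about a cardinality of local homologies being ``insufficient to absorb the topology.'' Also, the lemma you want is the Bangert--Klingenberg homological \emph{vanishing} result (\cite[theorem~2]{b:BK}), not a ``Bangert--Hingston nonvanishing lemma''; Hingston's argument for the torus case is a different route.
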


Here it is worthwhile to point out that the  infinitely many periodic orbits that we find are geometrically distinct in the phase-space  $\R/\Z\times\Tan M$ of our system, and the mean action of an orbit is defined as the usual Lagrangian action divided by the period of the orbit.

Theorem~\ref{t:Conley} can be equivalently stated in the Hamiltonian formulation. In fact, it is well known that the Legendre duality sets up a one-to-one correspondence
\[
\gbra{
\begin{tabular}{c}
$\Lagr:\R/\Z\times \Tan M\to \R$\\
Tonelli
\end{tabular}
}
\longleftrightarrow
\gbra{
\begin{tabular}{c}
$\Ham:\R/\Z\times \Tan^* M\to \R$\\
Tonelli
\end{tabular}
},
\]
where two correspondent functions $\Lagr$ and $\Ham$ satisfy  the Fenchel relations
\begin{equation}\label{e:Fenchel}
\begin{split}
\Ham(t,q,p)&=\max \gbra{p(v) - \Lagr(t,q,v)\,|\,v\in\Tan_qM},\s\s\forall (t,q,p)\in\Tan^*M,\\
\Lagr(t,q,v)&=\max \gbra{p(v) - \Ham(t,q,p)\,|\,p\in\Tan_q^*M},\s\s\forall (t,q,v)\in\Tan M.
\end{split}
\end{equation}  
Here, the definition of Tonelli Hamiltonian is the cotangent bundle analogue of the one of Tonelli Lagrangian: the Hamiltonian $\Ham:\R/\Z\times\Tan^*M\to\R$ is Tonelli when it has fiberwise superlinear growth and positive definite fiberwise Hessian. The Legendre duality also sets up  a one-to-one correspondence between the (contractible) integer-periodic solutions of the Euler-Lagrange system of $\Lagr$ and the (contractible) integer-periodic orbits of the Hamilton system of the dual $\Ham$. Therefore theorem~\ref{t:Conley} can be translated into the following.
\begin{thm}[Hamiltonian formulation]\label{t:ConleyTonHam}
Let $M$ be a smooth   closed manifold,  $\Ham:\R/\Z\times \Tan^* M\to \R$ a smooth 1-periodic  Tonelli  Hamiltonian with global flow and    $\ACT\in\R$ a  constant greater than
\begin{align*}
-\min_{q\in M} \gbra{ \int_0^1  \min_{p\in\Tan^*_qM} \gbra{ \Ham(t,q,p) }  \, \diff t}.
\end{align*}
Assume that only finitely many contractible  1-periodic  solutions of the Hamilton system of $\Ham$ have (Hamiltonian) action less than $\ACT$.  Then, for each prime $p\in\N$, the Hamilton system of $\Ham$ admits   infinitely many contractible periodic solutions with period that is a power of $p$ and (Hamiltonian) mean action less than $\ACT$. 
\end{thm}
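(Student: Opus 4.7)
\medskip

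\noindent\textbf{Proof plan.} The strategy is to reduce Theorem~\ref{t:ConleyTonHam} directly to Theorem~\ref{t:Conley} by means of the Legendre duality recalled in the paragraph preceding the statement. First, given a smooth 1-periodic Tonelli Hamiltonian $\Ham$ with global flow, I would define the dual Lagrangian $\Lagr:\R/\Z\times\Tan M\to\R$ via the second Fenchel relation in \eqref{e:Fenchel}. It is a classical fact in convex analysis that the Legendre transform of a fiberwise smooth, fiberwise strictly convex and fiberwise superlinear function is again fiberwise smooth, strictly convex and superlinear; hence $\Lagr$ is itself Tonelli and 1-periodic in time.

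\medskip

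\noindent Next I would verify that the Legendre transform $(q,v)\mapsto(q,\partial_v\Lagr(t,q,v))$ is a time-dependent diffeomorphism between $\Tan M$ and $\Tan^*M$ that conjugates the Euler-Lagrange flow of $\Lagr$ to the Hamilton flow of $\Ham$. In particular the flow of $\Lagr$ is global, and the conjugation restricts to a bijection between contractible $k$-periodic solutions of the two systems for each $k\in\N$, preserving the homotopy class and the period. Moreover, a standard computation using the Fenchel relation shows that along a Hamilton trajectory $(q(t),p(t))$ the integrand $p(t)\dot q(t)-\Ham(t,q(t),p(t))$ coincides with $\Lagr(t,q(t),\dot q(t))$, so the Hamiltonian action of an orbit equals the Lagrangian action of its Legendre image; the mean actions therefore also agree.

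\medskip

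\noindent The last point to check is that the constant $\ACT$ appearing in Theorem~\ref{t:ConleyTonHam} satisfies the hypothesis of Theorem~\ref{t:Conley}. Specializing the second Fenchel relation to $v=0$ yields
\begin{align*}
\Lagr(t,q,0) \;=\; \max_{p\in\Tan^*_qM}\bigl\{\!-\Ham(t,q,p)\bigr\} \;=\; -\min_{p\in\Tan^*_qM}\Ham(t,q,p),
\end{align*}
so that
\begin{align*}
\max_{q\in M}\int_0^1 \Lagr(t,q,0)\,\diff t \;=\; -\min_{q\in M}\int_0^1 \min_{p\in\Tan^*_qM}\Ham(t,q,p)\,\diff t,
\end{align*}
which is exactly the constant appearing in the Hamiltonian statement. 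Combining the three steps, the hypotheses of Theorem~\ref{t:Conley} are satisfied by $\Lagr$ with the same $\ACT$; applying that theorem and pulling the resulting family of periodic Euler-Lagrange solutions back through the Legendre transform yields the infinitely many contractible periodic Hamilton orbits required.

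\medskip

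\noindent The argument involves no genuine obstacle: the work is contained entirely in the foundational properties of the Fenchel--Legendre transform for Tonelli functions. The only point deserving slight care is the identity of the two constants, which however is an immediate consequence of evaluating the Fenchel relation at $v=0$. All real difficulty is deferred to the proof of Theorem~\ref{t:Conley}.
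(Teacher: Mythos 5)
Your proposal is correct and follows exactly the same reduction the paper uses: the paper also obtains Theorem~\ref{t:ConleyTonHam} by translating Theorem~\ref{t:Conley} through the Legendre--Fenchel duality between Tonelli Lagrangians and Tonelli Hamiltonians, noting that this duality matches up the flows, the contractible integer-periodic orbits, and (as you verify by evaluating the Fenchel relation at $v=0$) the threshold constant $\ACT$. The only difference is that the paper treats the correspondence as known and asserts it without the explicit verification you supply.
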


Let $\Phi_{\Ham}^t$ be the Hamiltonian flow of $\Ham$, i.e.\ if $\Gamma:\R\to\Tan^*M$ is a Hamiltonian curve then $\Phi_{\Ham}^t(\Gamma(0)) = \Gamma(t)$ for each $t\in\R$. A Hamiltonian curve $\Gamma$ is $\tau$-periodic, for some integer $\tau$, if and only if its starting point $\Gamma(0)$ is a $\tau$-periodic point of the Hamiltonian diffeomorphism $\Phi_{\Ham}=\Phi_{\Ham}^1$, i.e.
\[
\underbrace
{\Phi_{\Ham}\circ...\circ \Phi_{\Ham}}
_{\tau\mbox{ }\mathrm{times}} 
(\Gamma(0)) =\Gamma(0).
\]
 Therefore, theorem~\ref{t:ConleyTonHam} readily implies the Conley conjecture for Tonelli Hamiltonian systems on the cotangent bundle of a  closed manifold.
\begin{cor}\label{c:ConleyTon}
Let $M$ be a smooth  closed manifold and  $\Ham:\R/\Z\times\Tan^*M\to\R$ a 1-periodic  Tonelli Hamiltonian  with global flow $\Phi_{\Ham}^t$. Then the Hamiltonian diffeomorphism $\Phi_{\Ham}^1$ has infinitely many periodic points.
\hfill$\qed$
\end{cor}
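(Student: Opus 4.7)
The plan is to derive Corollary~\ref{c:ConleyTon} from Theorem~\ref{t:ConleyTonHam} by contradiction. Suppose that $\Phi_{\Ham}^1$ has only finitely many periodic points. Then, in particular, it has only finitely many fixed points; since the 1-periodic solutions $\Gamma$ of the Hamilton system of $\Ham$ correspond bijectively to the fixed points of $\Phi_{\Ham}^1$ via $\Gamma \mapsto \Gamma(0)$, only finitely many such solutions exist, and \emph{a fortiori} only finitely many contractible 1-periodic solutions have action less than any prescribed constant.

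Next I would pick any constant $\ACT\in\R$ strictly greater than
\[
-\min_{q\in M}\gbra{\int_0^1 \min_{p\in\Tan^*_qM}\Ham(t,q,p)\, \diff t},
\]
a quantity that is finite because $\Ham$ is fiberwise superlinear (so the inner minimum is attained for each $(t,q)$ and depends continuously on them) and $M$ is compact. With this choice the hypotheses of Theorem~\ref{t:ConleyTonHam} are fulfilled, so, after fixing any prime $p$, the theorem supplies an infinite family of geometrically distinct contractible periodic solutions of the Hamilton system of $\Ham$ whose periods are powers of $p$ and whose mean actions are less than $\ACT$.

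Each such solution $\Gamma$ of period $\tau=p^{k}$ yields a $\tau$-periodic point $\Gamma(0)$ of $\Phi_{\Ham}^1$, via the correspondence recalled just above Corollary~\ref{c:ConleyTon}, and two geometrically distinct orbits of the Hamiltonian flow must have distinct starting points by uniqueness of solutions to the Hamilton equations. The infinite family of orbits produced by the theorem therefore gives rise to infinitely many distinct periodic points of $\Phi_{\Ham}^1$, contradicting the standing assumption. I expect no serious obstacle here: the corollary is an essentially immediate consequence of the Hamiltonian reformulation of the main theorem, and the only step demanding a brief verification is the finiteness of the lower bound that constrains $\ACT$.
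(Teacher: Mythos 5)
Your argument is correct and matches the paper's own (implicit) reasoning: the paper states the corollary with a $\qed$, treating it as an immediate consequence of Theorem~\ref{t:ConleyTonHam} together with the correspondence, recalled just before the corollary, between integer-periodic Hamiltonian orbits and integer-periodic points of $\Phi_{\Ham}^1$. The only small stylistic difference is that you phrase it as a contradiction where a direct dichotomy would also do (either the finiteness hypothesis of Theorem~\ref{t:ConleyTonHam} holds, giving the new orbits, or it fails, in which case there are already infinitely many $1$-periodic orbits); your verifications of the finiteness of the lower bound on $\ACT$ (equivalently $\max_q\int_0^1\Lagr(t,q,0)\,\diff t$ by Fenchel duality) and of the fact that geometrically distinct orbits have distinct initial conditions are both correct and appropriate.
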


We shall prove theorem~\ref{t:Conley} by a Morse theoretic argument, inspired by a work of Long \cite{b:Lo}. The rough idea is the following: assuming by contradiction that the Euler-Lagrange system of a Tonelli Lagrangian $\Lagr$ admits only finitely many integer periodic solutions as in the statement, then it is possible to find a solution whose local homology persists under iteration, in contradiction with a homological vanishing property  (analog to the one proved by Bangert and Klingenberg \cite{b:BK} for the geodesics action functional).

Under the Tonelli assumptions we need to deal with several problems while carrying out the above scheme of the proof. These problems are mainly due to the fact that  a functional setting in which the Tonelli action functional is both regular (say $C^1$) and satisfies the Palais-Smale condition, the minimum requirements to perform Morse theory, is not known. To deal with these lacks, we apply the machinery of convex quadratic modifications introduced by Abbondandolo and Figalli  \cite{b:AbFig}: the idea consists in modifying the involved Tonelli Lagrangian outside a sufficiently big neighborhood of the zero section of $\Tan M$, making it fiberwise quadratic there. If we fix a period $\tau\in\N$ and an action bound, a suitable a priori estimate on the $\tau$-periodic  orbits with bounded action  allows to prove that these orbits must lie in the region where the Lagrangian is not modified. Some work is needed in order  to apply this argument in our proof, since the mentioned  a priori estimate holds only in a fixed period $\tau$, while we look for  orbits with arbitrarily high period. 

Moreover, we have to deal with some regularity issues in applying the machinery of critical point theory to the action functionals of the modified Lagrangians. In fact, the natural ambient to perform Morse theory with these functionals is the $W^{1,2}$ loop space, over which they are $C^1$ and satisfy the Palais-Smale condition. However, all the arguments that involve the Morse lemma (such as the vanishing of the local homology groups in certain degrees) are valid only for $C^2$ functionals. We will show that these arguments are still valid, developing an analog of the classical broken geodesic approximation of the loop space: we shall prove that the action sublevels deformation retract onto finite dimensional submanifolds of the loop space, over which the action functionals are $C^2$.

\subsection{Organization of the paper} 
In section~\ref{s:Prel} we set up the  notation and we give most of the preliminary definitions and results.
In the subsequent three sections we deal with Lagrangian functions that are convex quadratic-growth (the precise definition is given in section~\ref{s:Lagr_settings}). In section~\ref{s:discretizations} we introduce a discretization technique for the   action functional. In section~\ref{s:LocHomEmb} we prove an abstract Morse-theoretic result, which will be applied to the   action functional. In section~\ref{s:HomVanSec}, we prove a  vanishing result for elements of the relative homology groups of pairs of action sublevels under the iteration map. The three sections \ref{s:discretizations},  \ref{s:LocHomEmb} and \ref{s:HomVanSec} can be read independently from one another, with the only exception of  subsection~\ref{s:ApplMorseLagr}, which  requires section~\ref{s:discretizations}.  In section~\ref{s:conv_quad_modifications} we introduce the machinery of convex quadratic modifications of Tonelli Lagrangians, and we apply it to build suitable local homology groups and to prove a homological vanishing result for the Tonelli action. Finally, in section~\ref{s:proof} we  prove theorem~\ref{t:Conley}.\\

\begin{quote}
\footnotesize
\textbf{Acknowledgments.} I am indebted to  Alberto Abbondandolo for many fruitful conversations. This work was largely written when I was a visitor at Stanford University. I wish to thank Yakov Eliashberg for his kind hospitality.
\end{quote}

\section{Preliminaries}\label{s:Prel}

Throughout the paper, $M$ will be a  smooth $N$-dimensional closed  manifold, the configuration space of a Lagrangian  system, over which we will consider an arbitrary Riemannian metric $\abra{\cdot,\cdot}_\cdot$.

\subsection{The free loop space}\label{s:free_loop_space} 
We recall that a \textbf{free loop space} of $M$ is, loosely speaking, a set of maps from the circle $\T:=\R/\Z$ to $M$. Common examples are the spaces $C(\T;M)$ or $C^\infty(\T;M)$. For our purposes, a suitable free loop space will be $\W(\T;M)$, which is the space of absolutely continuous loops in $M$ with square-integrable weak derivative. It is well known that this space admits an infinite dimensional Hilbert manifold structure, for the reader's convenience we briefly sketch this argument here (see \cite[chapter~1]{b:Kl} for more details). First of all, we recall that that  the inclusions 
\[C^\infty(\T;M)\subseteq\W(\T;M) \subseteq C(\T;M)\] are dense homotopy equivalences. For each $\gamma\in \W(\T;M)$  we denote by $\W(\gamma^*\Tan M)$ the separable Hilbert space of  $W^{1,2}$-sections of the  pull-back vector bundle $\gamma^*\Tan M$. Its inner product, that we denote by $\aabra{\cdot,\cdot}_\gamma$, is given by 
\[\aabra{\xi,\zeta}_\gamma:=\int_0^1 \qbra{\abra{\xi(t),\zeta(t)}_{\gamma(t)} + \abra{\nabla_{t}\xi,\nabla_{t}\zeta}_{\gamma(t)} } \diff t,
\s\s
\forall \xi,\zeta\in\W(\gamma^*\Tan M),\]
where $\nabla_t$ denotes the covariant derivative with respect to the Levi-Civita connection on the Riemannian manifold  $(M,\abra{\cdot,\cdot}_\cdot)$. Now,  let $\epsilon>0$ be a constant smaller than the injectivity radius of $(M,\abra{\cdot,\cdot}_\cdot)$, and  $U_\epsilon:=\gbra{v\in\Tan_q M\,|\, q\in M,\  |v|_q<\epsilon}$. 
We define a bijective map 
\[
   \exp_\gamma:
   \W(\gamma^*U_\epsilon)
   \to 
   \catU_\gamma
   \subseteq\W(\T;M)
\] 
as
\[
(\exp_\gamma\xi)(t):=\exp(\xi(t)),\s\s\forall \xi\in \W(\gamma^*U_\epsilon),\ t\in\T,
\]
where $\W(\gamma^*U_\epsilon)\subseteq\W(\gamma^*\Tan M)$ is the open set of sections   that take values inside $U_\epsilon$. Then, the above mentioned differentiable structure on $\W(\T;M)$ is induced  by the atlas $\gbra{\exp_\gamma^{-1}:\catU_\gamma\to \W(\gamma^*U_\epsilon)\,|\,\gamma\in C^\infty(\T;M)}$. The tangent space of $\W(\T;M)$ at a loop $\gamma$ is given by $\W(\gamma^*\Tan M)$, and the above defined $\aabra{\cdot,\cdot}_\cdot$ is a Hilbert-Riemannian metric on $\W(\T;M)$. By means of this metric, $\W(\T;M)$ turns out to be a \textbf{complete} Hilbert-Riemannian manifold.

\begin{rem}
Notice that, whenever a smooth loop $\gamma$ is contractible, by means of a trivialization of $\gamma^*\Tan M$ we can identify  $\W(\gamma^*U_\epsilon)$ with an open neighborhood of $\bm0$ in the Hilbert space $\W(\T;\R^N)$. 
\end{rem}

For each  $\tau\in\N$, let $\TT\tau:=\R/\tau\Z$. Extending the definition given before, we introduce the \textbf{$\tau$-periodic free loop space} $\W(\TT\tau;M)$,  which is a complete Hilbert-Riemannian manifold with respect to the metric $\aabra{\cdot,\cdot}_\cdot$ given by 
\begin{multline*}
\aabra{\xi,\zeta}_\gamma:=\frac 1\tau \int_0^\tau 
\qbra{ \abra{\xi(t),\zeta(t)}_{\gamma(t)} + 
\abra{\nabla_t\xi,\nabla_t\zeta}_{\gamma(t)} }
\diff t, \\ 
\forall \gamma\in\W(\TT \tau;M),\ \xi,\zeta\in\W(\gamma^*\Tan M).
\end{multline*}
For each  $n\in\N$, we define the \textbf{$n^{\mathrm{th}}$-iteration map} \[\itmap n:\W(\TT\tau;M)\hookrightarrow\W(\TT{n\tau};M)\] by $\itmap n (\gamma):=\gamma\iter n$ for each $\gamma\in\W(\TT\tau;M)$, where $\gamma\iter n$ is given by the composition of $\gamma$ with the $n$-fold covering map of the circle $\TT\tau$. Analogously, for each $\gamma\in \W(\TT\tau;M)$, we define the $n^{\mathrm{th}}$-iteration map \[\Itmap n:\W(\gamma^*\Tan M)\hookrightarrow\W(\gamma\iter n{}^*\Tan M),\] which is a linear isometric embedding. It is straightforward to verify that \[\exp_{\gamma\iter n}\circ\Itmap n=\itmap n\circ\exp_\gamma\] and \[\diff\itmap n(\gamma)=\Itmap n,\] which implies that $\itmap n$ is a smooth isometric embedding.

\subsection{Lagrangian settings} \label{s:Lagr_settings}
The elements of the tangent bundle $\Tan M$ will be denoted by $(q,v)$, where $q\in M $ and $v\in \Tan_qM$. 
Let $\Lagr:\T\times\Tan M\to\R$ be  a smooth 1-periodic Lagrangian. We will be interested in integer periodic solutions $\gamma:\R\to M$ of the \textbf{Euler-Lagrange system} of $\Lagr$, which   can be written in local coordinates as
\begin{align} \label{e:EuleroLagrange}
    &\der{}{t} \pder{\Lagr}{v_j} (t,\gamma(t),\dot\gamma(t)) - \pder{\Lagr}{q_j} (t,\gamma(t),\dot\gamma(t)) = 0,
    &  j=1,...,N.
\end{align}
We denote by $\displaystyle\Phi_{\Lagr}^t:\Tan M\toup^\simeq\Tan M$ the associated \textbf{Euler-Lagrange flow}, i.e.\  
\[\Phi_{\Lagr}^t(\gamma(0),\dot\gamma(0))=(\gamma(t), \dot\gamma(t)),\] where $\gamma:[0,t]\to M$ is a solution of~\eqref{e:EuleroLagrange}.

In this paper we will consider two classes of   1-periodic  Lagrangian functions. A smooth Lagrangian  $\Lagr:\T\times\Tan M\to\R$ is called \textbf{Tonelli} when:
\begin{itemize}
  \item[\textbf{(T1)}] the fiberwise Hessian of $\Lagr$ is positive definite, i.e.\
\[ \sum_{i,j=1}^N \frac{\partial^2\Lagr}{\partial v_i\,\partial v_j}(t,q,v)\,w_iw_j > 0,\]
for all $(t,q,v)\in\T\times \Tan M$ and  $w=\sum_{i=1}^N w_i \pder{}{q_i}\in\Tan_qM$ with $w\neq0$;
  \item[\textbf{(T2)}] $\Lagr$ is fiberwise superlinear, i.e.
  \[\lim_{|v|_q\to\infty}  \frac{\Lagr(t,q,v)}{|v|_q}=\infty, \] 
for all $(t,q)\in\T\times  M$.
\end{itemize}
Moreover, we will always require that each Tonelli Lagrangian $\Lagr$  further satisfies:
\begin{itemize}
  \item[\textbf{(T3)}] the Euler-Lagrange flow of $\Lagr$ is global, i.e.\ $\Phi_{\Lagr}:\R\times \Tan M\to \Tan M$.
\end{itemize}

The second class of Lagrangians that we will be interested in consists of  smooth functions  $\Lagr:\T\times\Tan M\to\R$ satisfying:
\begin{itemize}
  \item[\textbf{(Q1)}]  there is a positive constant  $\ell_0$ such that 
\[ \sum_{i,j=1}^N \frac{\partial^2\Lagr}{\partial v_i\,\partial v_j}(t,q,v)\,w_iw_j \geq \ell_0 |w|_q^2,\]
for all $(t,q,v)\in\T\times \Tan M$ and $w=\sum_{i=1}^N w_i \pder{}{q_i}\in\Tan_qM$;
   \item[\textbf{(Q2)}] there is a positive constant $\ell_1$ such that    
\begin{align*}
   \abs{ \frac{\partial^2\Lagr}{\partial v_i\,\partial v_j}  (t,q,v)} & \leq\ell_1,\\
   \abs{ \frac{\partial^2\Lagr}{\partial q_i\,\partial v_j} (t,q,v)} & \leq\ell_1(1+\abs{v}_q),\\
   \abs{\frac{\partial^2\Lagr}{\partial q_i\,\partial q_j}(t,q,v)} & \leq\ell_1(1+\abs{v}_q^2),
\end{align*}
  for all $(t,q,v)\in \T\times\Tan M$ and $i,j=1,...,N$.  
\end{itemize}
In the following we will informally refer   to this latter class   as the class of \textbf{convex quadratic-growth} Lagrangians. Notice that, up to changing the constants $\ell_0$ and $\ell_1$, the above conditions \textbf{(Q1)} and \textbf{(Q2)} are independent of the choice  of the Riemannian metric and of the system of local coordinates used to express them. Moreover,  assumption \textbf{(Q1)} implies that $\Lagr$ is a Tonelli Lagrangian, hence this second class is contained in the first.

For each $\tau\in\N$, we define   the \textbf{mean   action functional} $\act\tau:\W(\TT\tau;M)\to\R$ by
\[\act\tau(\gamma)=\frac 1\tau\int_0^\tau \Lagr(t,\gamma(t),\dot\gamma(t))\, \diff t.\]
In the following we will simply call $\act\tau$  the \textbf{mean action} or just the \textbf{action}, and in period $1$ we will omit the superscript, i.e. $\Act:=\act1$. Since for all $n\in\N$ we have $\act{n\tau}\circ\itmap n=\act\tau$, if we see $\W(\TT\tau;M)$ as a submanifold of $\W(\TT{n\tau};M)$ via the embedding $\itmap n$, then $\act\tau$ is  the restriction of $\act{n\tau}$ to $\W(\TT\tau;M)$. It is well known that the $\tau$-periodic solutions of the Euler-Lagrange system~\eqref{e:EuleroLagrange}  are precisely the extremals of $\act\tau$. These extremals  turn out to be smooth, as it is guaranteed by the Tonelli assumptions. If the involved Lagrangian $\Lagr$ is  convex quadratic-growth, the associated action functional $\act\tau$ has good properties:  the fact that $\Lagr$ grows \emph{at most} quadratically  guarantees that $\act\tau$ is $C^1$ and twice Gateaux-differentiable, while the fact that $\Lagr$ grows \emph{at least} quadratically   implies that $\act\tau$ satisfies the Palais-Smale condition (see \cite{b:Be} or \cite[propositions 2.2 and  2.5]{b:AS2}). However, $\act\tau$ is $C^2$ if and only if  the restriction of $\Lagr$ to each fiber of $\Tan M$ is a polynomial of degree at most  $2$ (see \cite[proposition  2.3]{b:AS2}).

\begin{rem}
For simplicity, in this paper, all the Lagrangian functions are assumed to be smooth, i.e.\ $C^\infty$. This assumption can be easily weakened, but then one would have to care about technical issues due to the fact that the  solutions of the Euler-Lagrange system would not be $C^\infty$ anymore (they are  $C^r$ whenever the Lagrangian is $C^r$).
\end{rem}

\subsection{The Conley-Zehnder-Long index pair}\label{s:CZLetc}
Let  $\Lagr:\T\times \Tan M\to\R$   be a  1-periodic    Tonelli  Lagrangian. We denote by $\partial_v \Lagr(t,q,v)\in\Tan_q^*M$ the fiberwise derivative of $\Lagr$ at $(t,q,v)$, which is given in local coordinates by 
\[ \partial_v \Lagr(t,q,v)=\sum_{j=1}^N \pder{\Lagr}{v_j}(t,q,v)\, \diff q_j. \]
Under the Tonelli assumptions it is well known that the Legendre transform $\Leg:\T\times\Tan M\to\T\times \Tan^*M$, given by
\begin{align*}
\Leg(t,q,v)=(t,q,\partial_v \Lagr(t,q,v)),\s\s\forall(t,q,v)\in\T\times\Tan M,
\end{align*}
is a diffeomorphism (see \cite[theorem 3.4.2]{b:Fa}). This diffeomorphism allows to define a  Hamiltonian $\Ham:\T\times\Tan^*M\to\R$ by
\[\Ham(t, \Leg(t,q,v) )=\partial_v \Lagr(t,q,v) v-\Lagr(t,q,v),\s\s\forall(t,q,v)\in\T\times\Tan M.\]
The functions $\Lagr$ and $\Ham$ are said to be \textbf{Legendre-dual}, and they fulfill the Fenchel relations~\eqref{e:Fenchel}.

The cotangent bundle $\Tan^*M$, whose elements will be denoted by $(q,p)$, has a canonical symplectic form $\omega$ given in local coordinates by
\[ \omega = \sum_{j=1}^N \diff q_j \wedge \diff p_j. \]
The (time-dependent) \textbf{Hamiltonian vector field} $X_{\Ham}$ is defined as usual by $X_{\Ham}\lrcorner\,\omega=\diff \Ham$, and its flow $\Phi_{\Ham}^t$ is called the \textbf{Hamiltonian flow} of $\Ham$. It is well known that this latter is conjugated to the Euler-Lagrange flow $\Phi_{\Lagr}^t$ by the Legendre transform. In other words,  a curve $\gamma:[0,\tau]\to M$ is a solution of the Euler-Lagrange system of $\Lagr$ if and only if the curve $(\gamma,\rho):[0,\tau]\to\Tan^*M$, where $\rho(t):=\partial_v \Lagr(t,\gamma(t),\dot\gamma(t))$, is an integral curve of the Hamiltonian vector field $X_{\Ham}$. In particular, there is a one-to-one  correspondence between the $\tau$-periodic Euler-Lagrange orbits of $\Lagr$ and the $\tau$-periodic  Hamiltonian orbits of $\Ham$.

Let $\Tan^{\mathrm{ver}}\Tan^*M$ denote the vertical subbundle  of $\Tan\Tan^*M$, i.e.\ 
\[\Tan^{\mathrm{ver}}_{(q,p)}\Tan^*M=\ker(\diff\tau^*(q,p)),\s\s\forall(q,p)\in\Tan^*M,\] 
where  $\tau^*:\Tan^*M\to M$ is the projection of the cotangent bundle onto the base manifold.  Consider a $\tau$-periodic solution $\gamma$ of the Euler-Lagrange system of $\Lagr$ and its Hamiltonian correspondent $\Gamma=(\gamma,\partial_v \Lagr( \cdot, \gamma, \dot\gamma))$. If $\gamma$ is contractible, $\Gamma$ is contractible as well, and there exists a symplectic trivialization 
\[\phi:\TT\tau\times\R^{2N}\toup^\simeq \Gamma^*\Tan\Tan^*M\]
that maps  the vertical Lagrangian subspace  $\V^N:=\gbra{\bm0}\times\R^N\subset\R^{2N}$   to the vertical sub-bundle $\Gamma^*\Tan^{\mathrm{ver}}\Tan^*M$, more precisely
\begin{align}\label{e:trivVert}
\phi(\TT\tau\times\V^N)=\Gamma^*\Tan^{\mathrm{ver}}\Tan^*M,
\end{align}
see  \cite[lemma 1.2]{b:AbSc} for a proof. By means of this trivialization, the differential of the Hamiltonian flow along $\Gamma$ defines a path $\Gamma_\phi:[0,\tau]\to\Sp(2N)$ in the symplectic group, given by
\[ \Gamma_\phi(t):= \phi(t,\cdot)^{-1}\circ\diff\Phi_{\Ham}^t(\Gamma(0))\circ\phi(0,\cdot),\s\s\forall t\in[0,\tau]. \]
Notice that $\Gamma_\phi(0)$ is the identity matrix, hence $\Gamma_\phi$ has a well defined \textbf{Conley-Zehnder index} $\iota(\Gamma_\phi)\in\Z$.  We  denote by $\nu(\Gamma_\phi)\in\N\cup\gbra0$  the geometric multiplicity of 1 as an eigenvalue of $\Gamma_\phi$ (in particular, we set $\nu(\Gamma_\phi)=0$ if $1$ is not an eigenvalue of $\Gamma_\phi$).

\begin{rem}
Here, we are using the generalized notion of Conley-Zehnder index that is due to Long, see \cite{b:LoMaslov}. In case $\nu(\Gamma_\phi)=0$, the  index $\iota(\Gamma_\phi)$ coincides with the usual  Conley-Zehnder index, see \cite{b:CZ, b:SZ}.  
\end{rem}

The pair $(\iota(\Gamma_\phi),\nu(\Gamma_\phi))$ does not depend on the chosen symplectic trivialization $\phi$, as long as this latter satisfies \eqref{e:trivVert}, see \cite[lemma 1.3]{b:AbSc}. Hence, we can define the \textbf{Conley-Zehnder-Long index pair}   of the periodic orbit $\gamma$ as  $(\iota(\gamma),\nu(\gamma)):=(\iota(\Gamma_\phi),\nu(\Gamma_\phi))$. This  pair satisfies the following \textbf{iteration inequalities}
\begin{equation}
\label{e:indexiteration}
\begin{split}
  n\, \aiota(\gamma) - N \leq &\, \iota(\gamma\iter n) , \\
    & \, \iota(\gamma\iter n)+\nu(\gamma\iter n) \leq n\, \aiota(\gamma) + N,
\end{split}
\s\s\forall n\in\N
\end{equation}
where $N=\dim(M)$ as before, and  $\aiota(\gamma)\in\R$ is \textbf{mean Conley-Zehnder index}   of $\gamma$, given by
\[ \aiota(\gamma) = \lim_{n\to\infty} \frac{\iota(\gamma\iter n)}{n}.  \] 
We refer the reader to \cite[theorem~1]{b:LL1} or \cite[theorem~1.1]{b:LL2} for more details on these inequalities.

If the Lagrangian $\Lagr$ also happens to be convex quadratic-growth, as we have already remarked in the previous section, its action functional $\act\tau$ is $C^1$ and twice Gateaux differentiable over the loop space $\W(\TT\tau;M)$. In this case,  $\iota(\gamma)$ is equal to  the Morse index of $\act\tau$ at $\gamma$, while $\nu(\gamma)$ is equal to the nullity of $\act\tau$ at $\gamma$, i.e.\ to the dimension of the null-space of the Gateaux-Hessian of $\act\tau$ at $\gamma$, see  \cite{b:Vi}, \cite{b:LoAn} or \cite{b:AbMaslov} for a proof.

\section{Discretizations for convex quadratic-growth Lagrangians}\label{s:discretizations}

Throughout this section, $\Lagr:\T\times\Tan M\to\R$ will be a 1-periodic  convex quadratic-growth Lagrangian, with associated  mean action $\act\tau$, $\tau\in\N$. In order to simplify the notation, we will work in period $\tau=1$, but everything goes through in every integer period.

The $\W$ functional setting for the action functional $\Act$ presents several drawbacks. First of all, the regularity that we can expect for $\Act$ is only $C^{1,1}$, at least if we assume to deal with a general convex quadratic-growth Lagrangian. This prevents the applicability of all those abstract results that require more smoothness, for instance the Morse lemma from critical point theory. Moreover, the $\W$ topology is sometimes  uncomfortable to work with. In fact, in several occasions it may be desirable to deal with a topology that is as strong as the $C^1$ topology, or at least as the $W^{1,\infty}$ topology. This would guarantee that the restriction of the action functional $\Act$ to a small neighborhood of a loop $\gamma$ only depends on the values that the Lagrangian assumes on a small neighborhood of the support of the lifted loop $(\gamma,\dot\gamma)$ in $\Tan M$. In the $W^{1,\infty}$ functional setting, the action functional $\Act$ is smooth, but unfortunately its sublevels do not satisfy any compactness condition (such as the Palais-Smale condition), which makes that functional setting inadequate for Morse theory. In order to overcome these difficulties, in this section  we develop a discretization technique that is a generalization to Lagrangian systems of the broken geodesics approximation of the path space (see \cite[section~16]{b:MiMo} or \cite[section~A.1]{b:Kl} for the Riemannian case, and \cite{b:Ra} for the Finsler case).

\subsection{Uniqueness of the action minimizers}
Given an interval $[t_0,t_1]\subset\R$, we say that an absolutely continuous curve $\gamma:[t_0,t_1]\to M$ is an \textbf{action minimizer} with respect to the Lagrangian $\Lagr$ when  every other absolutely continuous curve $\zeta:[t_0,t_1]\to M$ with the same endpoints of $\gamma$ satisfies
\[ \int_{t_0}^{t_1} \Lagr(t,\gamma(t),{\dot\gamma(t)})\,\diff t \leq  \int_{t_0}^{t_1} \Lagr(t,\zeta(t),{\dot\zeta(t)})\,\diff t .\]
It is well known that the action minimizers are  smooth solutions of the Euler-Lagrange system~\eqref{e:EuleroLagrange}. The existence of an action minimizer  joining two given points of $M$ is a known result that holds even for Tonelli Lagrangians, and it is essentially due to Tonelli (see  e.g.\ \cite{b:BGH} or \cite[page 98]{b:Fa}  for a modern treatment). A more ancient result, that goes back to Weierstrass, states that every sufficiently short action minimizer is unique, meaning that it is the only curve between its given endpoints that minimizes the action (see \cite[page 106]{b:Fa} or \cite[page 175]{b:Ma}). However, in this paper, we shall need the following stronger result that holds only for convex quadratic-growth Lagrangians.

\begin{prop}\label{p:fathi}
Let $\Lagr:\T\times\Tan M\to\R$ be a convex quadratic-growth Lagrangian. There exist  $\epsilon_0=\epsilon_0(\Lagr)>0$ and $\rho_0=\rho_0(\Lagr)>0$ such that, for each real interval $[t_0,t_1]\subset\R$ with $0<t_1-t_0\leq\epsilon_0$ and for all  $q_0,q_1\in M$ with $\dist(q_0,q_1)< \rho_0$, there is a unique action minimizer (with respect to $\Lagr$)  $\gamma_{q_0,q_1}:[t_0,t_1]\to M$ with  $\gamma_{q_0,q_1}(t_0)=q_0$ and $\gamma_{q_0,q_1}(t_1)=q_1$. 
\begin{proof}
For each absolutely continuous curve $\zeta:[t_0,t_1]\to M$, we denote by $\acp(\zeta)$ its action (with respect to $\Lagr$), i.e.  
\[ \acp(\zeta)=\int_{t_0}^{t_1} \Lagr(t,\zeta(t), \dot\zeta(t))\diff t \in\R\cup\gbra{+\infty}.\]
Up to summing a positive constant to $\Lagr$,  we can assume that there exist two positive constants $\underline\ell<\overline\ell$ such that 
\begin{align}\label{e:incastr}
\underline\ell\abs v_q^2 
\leq
\Lagr(t,q,v)
\leq
\overline\ell( \abs v_q^2+1 ),\s\s \forall q\in M, v\in\Tan_qM.
\end{align}
Consider two points $q_0,q_1\in M$ and two real numbers $t_0<t_1$. We put \[\rho:=\dist(q_0,q_1),\s \s \epsilon:=t_1-t_0.\] 
Since   $\W([t_0,t_1];M)$ is dense in the space of absolutely continuous maps from $[t_0,t_1]$ to $M$  and since the action minimizers are smooth,  a curve $\gamma_{q_0,q_1}$ as in the statement  is an action  minimizer  if and only if it is a global minimum of $\acp$ over the space 
\[\catW_{q_0,q_1}^{t_0,t_1} = \gbra{ \zeta\in\W([t_0,t_1];M)\,|\, \zeta(t_0)=q_0,\ \zeta(t_1)=q_1 }.\]
Therefore, all we have to do in order to prove the statement is to show that, for $\rho$ and $\epsilon$ sufficiently small, the functional $\acp|_{\catW_{q_0,q_1}^{t_0,t_1}}$ admits a unique global minimum.

Consider an arbitrary real constant $\mu>1$. By compactness, the manifold $M$ admits a finite atlas $\mathfrak U=\gbra{\phi_\alpha:U_\alpha\to\R^N\,|\,\alpha=1,...,u}$ such that, for all $\alpha\in\gbra{1,...,u}$, $q,q'\in U_\alpha$ and $v\in\Tan_qM$, we have
\begin{gather}
\label{e:mu1}  \mu^{-1} \abs{\phi_\alpha(q)-\phi_\alpha(q')} \leq  \dist(q,q')  \leq \mu \abs{\phi_\alpha(q)-\phi_\alpha(q')},\\
\label{e:mu2}  \mu^{-1} \abs{\diff\phi_\alpha(q)v} \leq  \abs v_q  \leq \mu \abs{ \diff\phi_\alpha(q)v },
\end{gather}
where we denote by $|{\cdot}|$ the standard norm in $\R^N$ and by $|{\cdot}|_q$ the Riemannian norm in $\Tan_qM$ as usual. Moreover, we can further assume that the image $\phi_\alpha(U_\alpha)$ of every chart is a convex subset of $\R^N$ (e.g.\ a ball). 
Let $\Leb(\mathfrak U)$ denote the Lebesgue number\footnote{We recall that, for every  open cover $\mathfrak U$ of a compact metric space, there exists a positive number $\Leb(\mathfrak U)>0$, the \textbf{Lebesgue number} of $\mathfrak U$, such that  every subset of the metric space of diameter less than $\Leb(\mathfrak U)$ is contained in some member of the cover $\mathfrak U$.} of the atlas $\mathfrak U$ and consider the two points $q_0,q_1\in M$ of the beginning with $\dist(q_0,q_1)=\rho$.  By   definition of Lebesgue number, the Riemannian closed ball 
\[ \overline{ B (q_0,\Leb(\mathfrak U)/2)}=\gbra{q\in M\,|\,\dist(q,q_0)\leq \Leb(\mathfrak U)/2} \]
is contained in a coordinate open set $U_\alpha$ for some $\alpha\in\gbra{1,...,u}$. Therefore, if we require  that $\rho \leq \Leb(\mathfrak U)/2$, the points $q_0$ and $q_1$  lie in the same open set $U_\alpha$.

Let $r:[t_0,t_1]\to U_\alpha$ be  the segment from $q_0$ to $q_1$ given by
\[
r(t)= \phi_\alpha^{-1} \cbra{
\frac{t_1- t}{\epsilon}
\, \phi_\alpha(q_0) 
+ 
\frac{t-t_0}{\epsilon}\, \phi_\alpha(q_1)
},\s\s\forall t\in[t_0,t_1]. \]
By \eqref{e:incastr}, \eqref{e:mu1} and \eqref{e:mu2} we obtain the following upper bound for the action of the curve $r$ 
\begin{align*}
\acp(r)  
& \leq   
\overline{\ell} \cbra{ \int_{t_0}^{t_1} |\dot r(t)|^2_{r(t)} \diff t + \epsilon }
\leq
\overline{\ell} \cbra{ \epsilon \max_{t\in[t_0,t_1]} \gbra{ |\dot r(t)|^2_{r(t)} } + \epsilon}
\\
& \leq 
\overline{\ell} \cbra{ \mu^2 \frac{ \abs{\phi_\alpha(q_1) - \phi_\alpha(q_0)}^2 }{  \epsilon} + \epsilon }
\leq  
\overline{\ell} \cbra{ \mu^4   \frac{ \dist(q_0,q_1)^2 }{  \epsilon} + \epsilon }
\\
&\leq
\overline{\ell} \mu^4  \cbra{ \frac{ \rho^2 }{  \epsilon} + \epsilon }=C \cbra{ \frac{ \rho^2 }{  \epsilon} + \epsilon },
\end{align*}
where the positive constant $C=\overline{\ell}   \mu^{4}$ does not depend  on   $q_0,q_1$ and $[t_0,t_1]$. This estimate, in turn, gives as an upper bound for the action of the minima, i.e.
\begin{align*}
\min_{\zeta\in\catW_{q_0,q_1}^{t_0,t_1}}\gbra{\acp(\zeta)}\leq C\cbra{ \frac{ \rho^2 }{  \epsilon} + \epsilon },
\end{align*}
therefore the   action sublevel
\begin{align}
\label{e:close_sublevel} 
 \catU_{q_0,q_1}^{t_0,t_1}=\catU_{q_0,q_1}^{t_0,t_1}(\rho,\epsilon)=
 \gbra{\zeta\in\catW_{q_0,q_1}^{t_0,t_1}\,|\, \acp(\zeta)\leq C \cbra{\frac{\rho ^2}\epsilon + \epsilon}} 
\end{align}
is not empty and it must contain a global  minimum $\gamma_{q_0,q_1}$ of the action (the existence of a minimum is a well known fact that holds even for Tonelli Lagrangians, see \cite[page 98]{b:Fa}). All we have to do in order to conclude is to show that, for $\rho $ and $\epsilon$ sufficiently small, the sublevel $\catU_{q_0,q_1}^{t_0,t_1}=\catU_{q_0,q_1}^{t_0,t_1}(\rho,\epsilon)$ cannot  contain other minima of the action.

By the first inequality in \eqref{e:incastr} we   have
\begin{align*}
  \int_{t_0}^{t_1}  |\dot\zeta(t)|^2_{\zeta(t)}  \diff t
  \leq {\underline{\ell}}^{-1} \acp(\zeta),\s\s\forall \zeta\in\catW_{q_0,q_1}^{t_0,t_1},
\end{align*}
and this, in turn, gives  the following bound for all $\zeta\in\catU_{q_0,q_1}^{t_0,t_1}$
\begin{align*}
  \max_{t\in[t_0,t_1]} \dist(\zeta(t_0),\zeta(t))^2 &
\leq
\cbra{\int_{t_0}^{t_1}  |\dot\zeta(t)|_{\zeta(t)}  \diff t}^2
\leq
\epsilon 
\int_{t_0}^{t_1}  |\dot\zeta(t)|^2_{\zeta(t)}  \diff t 
\\
&\leq
\epsilon {\underline{\ell}}^{-1}\acp(\zeta)
\leq
 C{\underline{\ell}}^{-1} (\rho ^2+\epsilon^2).
\end{align*}
Therefore all the curves $\zeta\in\catU_{q_0,q_1}(\rho ,\epsilon)$ have image inside the coordinate open set $U_\alpha\subseteq M$ provided $\rho $ and $\epsilon$ are sufficiently small, more precisely for 
\begin{align}\label{e:inside_chart}
\rho ^2 + \epsilon^2 \leq   \frac{\underline{\ell}}{4C} \Leb(\mathfrak U)^2  .
\end{align}
This allows us to  restrict our attention  to the open set $U_\alpha$. From now on we will briefly identify  $U_\alpha$ with $\phi_\alpha(U_\alpha)\subseteq\R^N$, so that
\[ q_0\equiv\phi_\alpha(q_0)\in\R^N,\s q_1\equiv\phi_\alpha(q_1)\in\R^N. \]
Without loss of generality we can also assume that $q_0\equiv \phi_\alpha(q_0)=\bm0\in\R^N$.
On the set $U_\alpha\equiv\phi_\alpha(U_\alpha)$ we will consider the standard flat norm $|\cdot|$ of $\R^N$, and the norms $\|\cdot\|_{L^1}$, $\|\cdot\|_{L^2}$ and $\|\cdot\|_{L^\infty}$ will be computed using this norm. We will also consider $\Lagr$ as a convex quadratic-growth Lagrangian of the form 
\[ \Lagr:\T\times \phi_\alpha(U_\alpha) \times\R^N\to\R\] by means of the identification
\[ \Lagr(t,q,v)\equiv \Lagr(t,\phi_\alpha^{-1}(q), \diff\phi_\alpha^{-1}(\phi_\alpha(q))v ). \]

Now, consider the following close convex subset of $\W([t_0,t_1];\R^N)$
\begin{multline}
\label{e:close_convex} 
    \catC_{q_0,q_1}^{t_0,t_1}=\catC_{q_0,q_1}^{t_0,t_1}(\rho,\epsilon)= 
    \biggl\{ \zeta\in \W([t_0,t_1];\R^N)\,\biggl|\, \\
    \zeta(t_0)=q_0=\bm0, \ \zeta(t_1)=q_1, \ 
    \| \dot\zeta \|_{L^2}^2\leq {\mu C}{\underline{\ell}}^{-1} \cbra{\frac{\rho^2}{\epsilon}+\epsilon }
    \biggr \}.  
\end{multline}
Since $\|\zeta\|_{L^\infty}^2\leq \epsilon \|\dot\zeta\|_{L^2}^2$, for $\rho$ and $\epsilon$ sufficiently small all the curves $\zeta\in\catC_{q_0,q_1}^{t_0,t_1}$ have support inside the open set $U_\alpha$. Moreover, by \eqref{e:incastr}, \eqref{e:mu2} and \eqref{e:close_sublevel},  we have 
\[  \| \dot\zeta \|_{L^2}^2  \leq 
 \mu \int_{t_0}^{t_1}  |\dot\zeta(t)|^2_{\zeta(t)}  \diff t
  \leq   {\mu } {\underline{\ell}}^{-1} \acp(\zeta)\leq {\mu C}{\underline{\ell}}^{-1}\cbra{\frac{\rho^2}\epsilon+\epsilon},\s\s\forall \zeta\in\catU_{q_0,q_1}^{t_0,t_1},
   \]
that implies $\catU_{q_0,q_1}^{t_0,t_1}\subseteq\catC_{q_0,q_1}^{t_0,t_1}$. Now, since we know that a minimum $\gamma_{q_0,q_1}$ of the action exists and all the minima lie in the closed convex subset $\catC_{q_0,q_1}^{t_0,t_1}\subseteq\W([t_0,t_1];\R^N)$, in order to conclude that $\gamma_{q_0,q_1}$ is the unique  minimum  we only need to show that the Hessian of the action is positive definite on $\catC_{q_0,q_1}^{t_0,t_1}$ provided $\rho$ and $\epsilon$ are sufficiently small, i.e.\ we need to show that there exist $\rho _0>0$ and $\epsilon_0>0$ such that, for all $\rho \in(0,\rho _0)$ and $\epsilon\in(0,\epsilon_0]$, we have
\begin{equation}\label{e:pos_def_C}
\begin{split}
&\Hess\acp(\zeta)[\sigma,\sigma]>0,\\
&\s\s\s\forall  \zeta\in \catC_{q_0,q_1}^{t_0,t_1}=\catC_{q_0,q_1}^{t_0,t_1}(\rho,\epsilon),\ \sigma\in \W_0([t_0,t_1];\R^N).
\end{split}
\end{equation}
Notice that the above Hessian  is well defined, since $\acp$ is $C^1$ and twice Gateaux differentiable. In \eqref{e:pos_def_C}, we have denoted by $\W_0([t_0,t_1];\R^N)$ the tangent space of $\catC_{q_0,q_1}^{t_0,t_1}$ at $\zeta$, i.e.
\[\W_0([t_0,t_1];\R^N)=\gbra{\sigma\in\W([t_0,t_1];\R^N)\,|\,\sigma(t_0)=\sigma(t_1)=\bm0}.\]
Consider arbitrary  $\zeta\in\catC_{q_0,q_1}^{t_0,t_1}$ and   $\sigma\in\W_0([t_0,t_1];\R^N)$. Then, we have
\begin{align*}
&\Hess\acp(\zeta)[\sigma,\sigma]\\
&\s=
\int_{t_0}^{t_1} 
\cbra{
\langle{ \partial^2_{vv}\Lagr(t,\zeta,\dot\zeta)\dot\sigma,\dot\sigma }\rangle 
+2
\langle{ \partial^2_{vq}\Lagr(t,\zeta,\dot\zeta)\sigma,\dot\sigma }\rangle 
+
\langle{ \partial^2_{qq}\Lagr(t,\zeta,\dot\zeta)\sigma,\sigma }\rangle 
}\diff t
\\
&\s\geq 
\int_{t_0}^{t_1}
\ell_0\abs{\dot\sigma}^2 \diff t
-
\underbrace{
\int_{t_0}^{t_1}
2 \ell_1(1+\mu |\dot\zeta|)\abs{\sigma}\abs{\dot\sigma} \diff t
}_{\mbox{$=:I_1$}}
-
\underbrace{
\int_{t_0}^{t_1}
 \ell_1 (1+\mu^2 |{\dot\zeta}|^2)\abs{\sigma}^2 \diff t
}_{\mbox{$=:I_2$}} 
 ,
\end{align*}
where $\ell_0$ and $\ell_1$ are the positive constants that appear in \textbf{(Q1)} and \textbf{(Q2)} with respect to the atlas $\mathfrak U$. Now, the quantities $I_1$ and $I_2$ can be estimated from above as follows
\begin{align*}
I_1 & 
\leq  2\ell_1\mu \|\sigma\|_{L^\infty} \cbra{ \|\dot\sigma\|_{L^1} + \norm{ |\dot\zeta|\cdot|\dot\sigma| }_{L^1} } \\
& \leq 2\ell_1\mu \sqrt\epsilon  \|\dot\sigma\|_{L^2} \cbra{\sqrt\epsilon \|\dot\sigma\|_{L^2} +   \|\dot\zeta\|_{L^2} \|\dot\sigma\|_{L^2} } \\ 
& =   2\ell_1\mu   \|\dot\sigma\|_{L^2}^2 \cbra{\epsilon + \sqrt\epsilon \|\dot\zeta\|_{L^2}  }, \\
I_2 & \leq  \ell_1 \mu^2 \cbra{ \|\sigma\|_{L^2}^2 + \|\sigma\|_{L^\infty}^2 \|\dot\zeta\|_{L^2}^2 }
\leq \ell_1 \mu^2 \|\dot\sigma\|_{L^2}^2 \cbra{ \epsilon^2 + \epsilon \|\dot\zeta\|_{L^2}^2 },
\end{align*}
and, since by \eqref{e:close_convex}   we have
\[ \| \dot\zeta \|_{L^2}^2\leq {\mu C}{\underline{\ell}}^{-1} \cbra{\frac{\rho^2}{\epsilon}+\epsilon }, \]
we conclude
\begin{align*}
&\Hess\acp(\zeta)[\sigma,\sigma] 
\\ 
&\s\geq 
\ell_0 \|\dot\sigma\|_{L^2}^2 - I_1 - I_2 
  \\
&\s\geq 
\|\dot\sigma\|^2_{L^2}
\underbrace{\cbra{
\ell_0  - 
2\ell_1\mu \cbra{ \sqrt{{\mu C}{\underline{\ell}}^{-1}} + 1 }(\rho+\epsilon) -
\ell_1\mu^2 \cbra{  {{\mu C}{\underline{\ell}}^{-1}} + 1 }(\rho^2+\epsilon^2)
}}_{\mbox{$=: F(\rho ,\epsilon)$}}. 
\end{align*}
Notice that the quantity $F(\rho ,\epsilon)$ is independent of the specific choice of the points $q_0,q_1$ and of the interval $[t_0,t_1]$, but depends only on   $\rho=\dist(q_0,q_1) $  and  $\epsilon=t_1-t_0$. Moreover, there exist $\rho _0>0$ and $\epsilon_0>0$   small enough so that  for all $\rho \in(0,\rho _0)$ and $\epsilon\in(0,\epsilon_0]$ the quantity $F(\rho ,\epsilon)$ is positive.  This proves \eqref{e:pos_def_C}.
\end{proof}
\end{prop}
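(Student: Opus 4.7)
The plan is to combine existence of minimizers (classical Tonelli theory) with strict convexity of the action on a convex set of $W^{1,2}$ paths, computed in a fixed chart, that contains every minimizer. Up to adding a constant to $\Lagr$, I would first secure two-sided quadratic bounds $\underline\ell|v|_q^2 \leq \Lagr(t,q,v) \leq \overline\ell(|v|_q^2+1)$, available from \textbf{(Q1)}--\textbf{(Q2)} together with compactness of $M$. Next, I would fix a finite atlas $\mathfrak U=\{\phi_\alpha:U_\alpha\to\R^N\}$ with each image $\phi_\alpha(U_\alpha)$ convex, and with the coordinate and Riemannian norms (and distances) bi-Lipschitz comparable with a uniform constant $\mu$. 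The Lebesgue number $\Leb(\mathfrak U)$ then governs how small $\rho$ must be so that $q_0$ and $q_1$ lie in a common chart $U_\alpha$.

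Within such a chart, I would test the minimal action against the Euclidean straight segment $r$ from $q_0$ to $q_1$. The upper quadratic bound together with the bi-Lipschitz comparisons immediately yield $\acp(r) \leq C(\rho^2/\epsilon + \epsilon)$ for a constant $C$ independent of $q_0,q_1,[t_0,t_1]$, so every minimizer $\gamma_{q_0,q_1}$ lies in the action sublevel cut out by this bound. Applying the lower quadratic bound to any $\zeta$ in this sublevel produces a uniform $L^2$ bound on $\dot\zeta$ and hence, via Cauchy--Schwarz, a uniform $C^0$ bound on $\dist(\zeta(t),q_0)$; for $\rho^2+\epsilon^2$ below a threshold depending only on $\Leb(\mathfrak U)$, each such $\zeta$ is confined to the chart containing both endpoints, so the entire analysis descends to Euclidean coordinates.

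In those coordinates, I would enclose the sublevel inside the closed convex set $\catC_{q_0,q_1}^{t_0,t_1}\subset\W([t_0,t_1];\R^N)$ of paths with the prescribed endpoints and with $\|\dot\zeta\|_{L^2}^2 \leq \mu C\underline\ell^{-1}(\rho^2/\epsilon+\epsilon)$. Since existence is known, uniqueness reduces to showing that the Gateaux Hessian $\Hess\acp(\zeta)$ is strictly positive on $\catC_{q_0,q_1}^{t_0,t_1}$. Decomposing $\Hess\acp(\zeta)[\sigma,\sigma]$ into the three integrals coming from $\partial^2_{vv}\Lagr$, $\partial^2_{vq}\Lagr$ and $\partial^2_{qq}\Lagr$, the first is bounded below by $\ell_0\|\dot\sigma\|_{L^2}^2$ via \textbf{(Q1)}, while the other two are controlled by \textbf{(Q2)} together with Hölder's inequality and the elementary Sobolev bound $\|\sigma\|_{L^\infty}\leq\sqrt\epsilon\,\|\dot\sigma\|_{L^2}$ available on $\W_0([t_0,t_1];\R^N)$. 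The $\|\dot\zeta\|_{L^2}^2$ factors appearing in these estimates are absorbed using the defining inequality of $\catC_{q_0,q_1}^{t_0,t_1}$, yielding a lower bound of the shape $\|\dot\sigma\|_{L^2}^2\,F(\rho,\epsilon)$ with $F(\rho,\epsilon)\to\ell_0>0$ as $(\rho,\epsilon)\to(0,0)$.

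The main obstacle is the simultaneous smallness choice for $\rho_0$ and $\epsilon_0$: one needs $\rho^2+\epsilon^2$ small enough to confine minimizers to a single chart (so that convexity in Euclidean coordinates is meaningful), and simultaneously $\rho^2/\epsilon+\epsilon$ small enough that the $\ell_0$ contribution from \textbf{(Q1)} dominates the error terms produced by the cross and position second derivatives. The quadratic growth hypotheses \textbf{(Q1)}--\textbf{(Q2)} are what make the competitor estimate and the Hessian bookkeeping compatible; this is precisely why the statement is stronger than the classical Weierstrass uniqueness available for arbitrary Tonelli Lagrangians, and why short minimizers between sufficiently close points are not merely locally but globally unique on the given interval.
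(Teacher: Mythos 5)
Your proposal is correct and follows essentially the same route as the paper: confine short low-action curves to a single chart via the competitor-segment estimate and the lower quadratic bound, then establish uniqueness by proving the Gateaux Hessian of the action is positive definite on the closed convex set $\catC_{q_0,q_1}^{t_0,t_1}$ using \textbf{(Q1)}, \textbf{(Q2)}, H\"older, and the embedding $\|\sigma\|_{L^\infty}\leq\sqrt\epsilon\,\|\dot\sigma\|_{L^2}$. There is no substantive difference in strategy or in the key estimates from the argument in the paper.
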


Now, we want to remark that the short action minimizers $\gamma_{q_0,q_1}$, given by  proposition~\ref{p:fathi}, depend smoothly on their endpoints $q_0$ and $q_1$. If $\rho_0$ is the constant given by proposition~\ref{p:fathi}, we denote by $\Delta_{\rho_0}$ the open neighborhood of the diagonal submanifold of $M\times M$ given by 
\[\Delta_{\rho_0}=\gbra{(q_0,q_1)\in M\times M\,|\,\dist(q_0,q_1)<\rho_0}.\]

\begin{prop}\label{p:fathi2}
With the notation of proposition~\ref{p:fathi}, for each real interval $[t_0,t_1]\subset\R$ with $0<t_1-t_0\leq\epsilon_0$ the assignment 
\begin{align}\label{e:minimizer_map}
(q_0,q_1)\mapsto\gamma_{q_0,q_1}:[t_0,t_1]\to M\end{align}
defines a smooth map $\Delta_{\rho_0}\to C^\infty([t_0,t_1];M)$.
\begin{proof}
Since the action minimizers are smooth, \eqref{e:minimizer_map} defines a map \[\Delta_{\rho_0}\to C^\infty([t_0,t_1];M),\] and we just need to show that the dependence of $\gamma_{q_0,q_1}$ from $(q_0,q_1)$ is smooth. If $t_1-t_0\in(0,\epsilon_0]$  and $(q_0,q_1)\in\Delta_{\rho_0}$, in the proof of proposition~\ref{p:fathi} we have already shown that the minimizer $\gamma_{q_0,q_1}:[t_0,t_1]\to M$ has image contained  in a coordinate neighborhood $U_\alpha\subseteq M$ that we can identify with an open set of $\R^N$. The curve $\gamma_{q_0,q_1}$ is a smooth solution of the Euler-Lagrange system of $\Lagr$, therefore
\[\Phi_{\Lagr}^t\circ(\Phi_{\Lagr}^{t_0})^{-1}(q_0,v_0)= (\gamma_{q_0,q_1}(t),\dot\gamma_{q_0,q_1}(t)),\s\s\forall t\in[t_0,t_1],\]
where $v_0=\dot\gamma_{q_0,q_1}(t_0)$ and $\Phi_{\Lagr}^t$ is the Euler-Lagrange flow associated to $\Lagr$ (see section~\ref{s:Lagr_settings}). 
We define
\[  Q^t:=\pi\circ\Phi_{\Lagr}^t\circ(\Phi_{\Lagr}^{t_0})^{-1}: U_\alpha' \times \R^N\to U_\alpha,\s\s\forall t\in[t_0,t_1], \]
where $U_\alpha'\subset U_\alpha$ is a small neighborhood of $q_0$, and   $\pi:\R^N\times\R^N\to\R^N$ is the projection onto  the first $N$ components, i.e. $\pi(q,v)=q$ for all $(q,v)\in\R^N\times\R^N$. We claim that 
\begin{align}\label{e:non_deg_gen_funct}
\diff Q^{t_1}(q_0,v_0)(\gbra{\bm0}\times \R^N)=\R^N.
\end{align}
In fact, assume by contradiction that~\eqref{e:non_deg_gen_funct} does not hold. Then, there exists a nonzero vector $v\in\R^N$ such that 
\[ \left.\der{}{s}\right|_{s=0} Q^{t_1}(q_0,v_0+s v)=\bm0.  \]
If we define the curve $\sigma:[t_0,t_1]\to\R^N$ by 
\[ \sigma(t):= \left.\der{}{s}\right|_{s=0} Q^{t}(q_0,v_0+s v), \]
then  $\sigma(t_0)=\sigma(t_1)=\bm0$, and $\sigma$ is a solution of the linearized Euler-Lagrange system
\begin{multline*} 
\der{}{t} \cbra{ \partial^2_{vv} \Lagr(t,\gamma_{q_0,q_1},\dot\gamma_{q_0,q_1})\dot\sigma + \partial^2_{vq} \Lagr(t,\gamma_{q_0,q_1},\dot\gamma_{q_0,q_1})\sigma  } \\
-  \partial^2_{qv} \Lagr(t,\gamma_{q_0,q_1},\dot\gamma_{q_0,q_1})\dot\sigma 
- \partial^2_{qq} \Lagr(t,\gamma_{q_0,q_1},\dot\gamma_{q_0,q_1})\sigma=0.
\end{multline*}
This implies that $\Hess\acp(\gamma_{q_0,q_1})[\sigma,\sigma]=0$, which contradicts the  positive definitiveness of $\Hess\acp(\gamma_{q_0,q_1})$ (see \eqref{e:pos_def_C} in the proof of proposition~\ref{p:fathi}). Therefore, \eqref{e:non_deg_gen_funct} must hold.

By the implicit function theorem we obtain a  neighborhood $U_{q_0,q_1}\subset\R^N\times\R^N$ of $(q_0,q_1)$, a neighborhood $U_{v_0}\subset\R^N$ of $v_0$ and a smooth map $V_0:U_{q_0,q_1}\to U_{v_0}$  such that, for each $(q_0',q_1',v_0')\in U_{q_0,q_1}\times U_{v_0}$, we have
$Q^{t_1}(q_0',v_0')=q_1'$ if and only if $v_0'=V_0(q_0',q_1')$. Then, we can define a smooth map from $U_{q_0,q_1}$ to $C^\infty([t_0,t_1];U_\alpha)$ given by 
\begin{align}\label{e:min_map2}
(q_0',q_1')\mapsto \zeta_{q_0',q_1'},
\end{align}
 where for each $t\in[t_0,t_1]$ we have
 \[\zeta_{q_0',q_1'}(t)=Q^{t}(q_0',V_0(q_0',q_1')).\]
  In order to conclude we only have to show that the  map in \eqref{e:min_map2} coincides with the one in \eqref{e:minimizer_map} on  $U_{q_0,q_1}$ provided this latter neighborhood is sufficiently small, i.e.\ we have to show that $\zeta_{q_0',q_1'}$ is the unique action  minimizer joining $q_0'$ and $q_1'$, for each $(q_0',q_1')$ in a sufficiently small neighborhood $U_{q_0,q_1}$ of $(q_0,q_1)$.
This is easily seen as follows. By construction, the curves $\zeta_{q_0',q_1'}$ are critical points of the action $\acp$ over the space $\catW_{q_0',q_1'}^{t_0,t_1}$, being solutions of the Euler-Lagrange system of $\Lagr$. By the arguments in the proof of proposition~\ref{p:fathi}, each of these curves $\zeta_{q_0',q_1'}$ is the unique action minimizer joining its endpoints if and only if it lies in the convex set $\catC_{q_0',q_1'}^{t_0,t_1}$ defined in \eqref{e:close_convex}.
We already know that $\zeta_{q_0,q_1}=\gamma_{q_0,q_1} \in\catC_{q_0,q_1}^{t_0,t_1}$.
Since the map in \eqref{e:min_map2} is smooth, for $(q_0',q_1')$ close to $(q_0,q_1)$ we obtain that the curve $\zeta_{q_0',q_1'}$ is $C^1$-close to $\zeta_{q_0,q_1}=\gamma_{q_0,q_1}$, and therefore  $\zeta_{q_0',q_1'}\in\catC_{q_0',q_1'}^{t_0,t_1}$.
\end{proof}
\end{prop}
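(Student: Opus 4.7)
The plan is to realize $\gamma_{q_0,q_1}$ as the image of a unique initial velocity under the Euler-Lagrange flow, and then extract smooth dependence on the endpoints via the implicit function theorem. Since proposition~\ref{p:fathi} already tells us that every short minimizer stays inside a single coordinate chart $U_\alpha$, I would work locally in $\R^N$: set $v_0:=\dot\gamma_{q_0,q_1}(t_0)$ and define the endpoint map
\[
Q^{t_1}(q,v):=\pi\circ\Phi_{\Lagr}^{t_1}\circ(\Phi_{\Lagr}^{t_0})^{-1}(q,v),
\]
where $\pi:\R^N\times\R^N\to\R^N$ is the projection onto the base. By construction $Q^{t_1}(q_0,v_0)=q_1$, and the flow $\Phi_\Lagr^t$ is smooth in its initial data, so the entire trajectory depends smoothly on $(q,v)$. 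Hence it is enough to solve $Q^{t_1}(q_0',v_0')=q_1'$ smoothly for $v_0'$ in terms of $(q_0',q_1')$ near $(q_0,q_1)$.

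The main obstacle is verifying the non-degeneracy hypothesis of the implicit function theorem, namely that the partial differential $\partial_v Q^{t_1}(q_0,v_0):\R^N\to\R^N$ is an isomorphism. I would argue by contradiction: a nonzero vector $v\in\ker \partial_v Q^{t_1}(q_0,v_0)$ would give rise, via $\sigma(t):=\tfrac{\diff}{\diff s}|_{s=0}Q^t(q_0,v_0+sv)$, to a nonzero vector field $\sigma:[t_0,t_1]\to\R^N$ with $\sigma(t_0)=\sigma(t_1)=\bm0$ satisfying the linearized Euler-Lagrange system along $\gamma_{q_0,q_1}$. Integrating by parts against $\sigma$ itself then yields $\Hess\acp(\gamma_{q_0,q_1})[\sigma,\sigma]=0$. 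But the estimate \eqref{e:pos_def_C} obtained in the proof of proposition~\ref{p:fathi} shows that $\Hess\acp(\gamma_{q_0,q_1})$ is strictly positive definite on $\W_0([t_0,t_1];\R^N)$, since $\gamma_{q_0,q_1}\in\catC_{q_0,q_1}^{t_0,t_1}$. This contradiction gives the required non-degeneracy, and the implicit function theorem then produces a smooth map $(q_0',q_1')\mapsto v_0'(q_0',q_1')$ on a neighborhood of $(q_0,q_1)$, whence a smooth family of Euler-Lagrange solutions $\zeta_{q_0',q_1'}$ valued in $C^\infty([t_0,t_1];M)$ with $\zeta_{q_0,q_1}=\gamma_{q_0,q_1}$.

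The last step is to identify $\zeta_{q_0',q_1'}$ with the genuine minimizer $\gamma_{q_0',q_1'}$ on a possibly smaller neighborhood of $(q_0,q_1)$. By construction $\zeta_{q_0',q_1'}$ is a critical point of $\acp$ on $\catW_{q_0',q_1'}^{t_0,t_1}$, and the Hessian argument of proposition~\ref{p:fathi} shows that any critical point lying in the convex set $\catC_{q_0',q_1'}^{t_0,t_1}$ must be the unique global minimum of $\acp$ there, hence coincides with $\gamma_{q_0',q_1'}$ thanks to the inclusion $\catU_{q_0',q_1'}^{t_0,t_1}\subseteq\catC_{q_0',q_1'}^{t_0,t_1}$. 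Since the family $\zeta_{q_0',q_1'}$ depends continuously on $(q_0',q_1')$ in the $C^1$ topology and the defining inequality of $\catC_{q_0',q_1'}^{t_0,t_1}$ in \eqref{e:close_convex} is open in this topology, shrinking the neighborhood of $(q_0,q_1)$ guarantees $\zeta_{q_0',q_1'}\in\catC_{q_0',q_1'}^{t_0,t_1}$, so $\zeta_{q_0',q_1'}=\gamma_{q_0',q_1'}$ locally. This yields smoothness of the assignment $(q_0,q_1)\mapsto\gamma_{q_0,q_1}$ on a neighborhood of every point of $\Delta_{\rho_0}$, and hence on all of $\Delta_{\rho_0}$.
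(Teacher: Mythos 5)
Your proposal reproduces the paper's argument essentially verbatim: define the endpoint map $Q^t$ via the Euler-Lagrange flow in the coordinate chart $U_\alpha$, prove non-degeneracy of $\partial_v Q^{t_1}(q_0,v_0)$ by contradiction using the positive-definiteness of $\Hess\acp$ on $\catC_{q_0,q_1}^{t_0,t_1}$ from the proof of proposition~\ref{p:fathi}, apply the implicit function theorem to obtain the smooth family $\zeta_{q_0',q_1'}$, and identify it with $\gamma_{q_0',q_1'}$ by the $C^1$-continuity and the strict convexity of $\acp$ on the sets $\catC_{q_0',q_1'}^{t_0,t_1}$. This is the same proof; the only quibble is that the constraint in \eqref{e:close_convex} is a closed condition rather than an ``open'' one as you state, but the conclusion that $C^1$-closeness forces $\zeta_{q_0',q_1'}\in\catC_{q_0',q_1'}^{t_0,t_1}$ is exactly what the paper asserts as well.
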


\subsection{The discrete action functional}\label{s:discr_act_funct}

Let $\epsilon_0=\epsilon_0(\Lagr)$ and $\rho_0=\rho_0(\Lagr)$ be the positive constants given by proposition~\ref{p:fathi},  and let $k\in\N$ be such that $1/k \leq \epsilon_0$. We define the  \textbf{$k$-broken Euler-Lagrange loop space} as the subspace $\Lambda_k=\Lambda_{k,\Lagr}\subset\W(\T;M)$ consisting of those loops $\gamma:\T\to M$ such that $\dist(\gamma(\frac ik),\gamma(\frac {i+1}k))<\rho_0$ and $\gamma|_{[i/k,(i+1)/k]}$ is an action  minimizer for each $i\in\gbra{0,...,k-1}$. Notice that,  by propositions~\ref{p:fathi} and~\ref{p:fathi2},  the correspondence 
\begin{align*} 
\textstyle \gamma\mapsto \cbra{ \gamma(0),\gamma(\frac 1k),..., \gamma(\frac{k-1}k)} 
\end{align*}
defines a diffeomorphism between $\Lambda_k$ and an open subset of the $k$-fold product $M\times...\times M$. Thus, $\Lambda_k$ is a finite dimensional submanifold of the $W(\T;M)$, which implies that the  $W^{1,2}$ and $W^{1,\infty}$ topologies coincide on it.

We define the \textbf{discrete action functional} $\Act_k$ as the restriction of $\Act$ to $\Lambda_k$, i.e.
\[ \Act_k:=\Act|_{\Lambda_k}.\]
Notice that $\Act_k$ is smooth, since  $\Act$ is smooth on $W^{1,\infty}(\T;M)$ and $\Lambda_k\subset W^{1,\infty}(\T;M)$. Moreover, the next proposition implies that, for each action value $c\in\R$, there is a sufficiently big discretization pass $k\in\N$ such that $\Act_k$ satisfies the Palais-Smale condition in the $c$-sublevel.

\begin{prop}
For each $c\in\R$ there exists $\bar k=\bar k(c)\in\N$ such that, for each $k\geq\bar k$, the closed sublevel  $\Act_k^{-1}(-\infty,c]$ is compact.
\begin{proof}
Consider the compact subset of $\Lambda_k$ defined by
\begin{align*}
C_k:= 
\gbra{
\gamma\in\Lambda_k\,|\,
\dist(\gamma(\sfrac ik),\gamma(\sfrac{i+1}k))\leq\rho_0/2,\  \forall i\in\gbra{1,...,k-1}
}.
\end{align*}
In order to prove the statement, we just need to show that 
\begin{align*}
\lim_{k\to\infty} \min\gbra{\Act_k(\gamma)\,|\,\gamma\in \partial C_k}=+\infty.
\end{align*}
Up to summing a positive constant to $\Lagr$, we can assume that there exists a  constant  $\underline\ell>0$ such that $\Lagr(t,q,v)\geq\underline\ell\, |v|_q^2$ for every  $(t,q,v)\in\T\times\Tan M$. Then, consider an arbitrary $\gamma\in\partial C_k$. For some $i\in\gbra{0,...,k-1}$ we have that \[\dist(\gamma(\sfrac ik),\gamma(\sfrac {i+1}k))=\rho_0/2,\]  and therefore we obtain the desired estimate
\begin{align*}
\Act_k(\gamma)
&\geq
\int_{i/k}^{(i+1)/k} \Lagr(t,\gamma(t),\dot\gamma(t))\,\diff t
\geq
\int_{i/k}^{(i+1)/k} \underline\ell\,|\dot\gamma(t)|_{\gamma(t)}^2\,\diff t\\
&\geq
k\,\underline\ell
\cbra{
\int_{i/k}^{(i+1)/k} \,|\dot\gamma(t)|_{\gamma(t)}\,\diff t
}^2
\geq
k\,\underline\ell\,\dist(\gamma(\sfrac ik),\gamma(\sfrac {i+1}k))^2\\
&\geq
k\,\underline\ell\,(\rho_0/2)^2.
\qedhere
\end{align*}
\end{proof}
\end{prop}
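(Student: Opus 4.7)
The plan is to exploit the identification of $\Lambda_k$ with an open subset of the compact manifold $M^{\times k}$ via the evaluation map $\gamma\mapsto(\gamma(0),\gamma(\tfrac1k),\ldots,\gamma(\tfrac{k-1}k))$ provided by propositions~\ref{p:fathi} and~\ref{p:fathi2}. Since the sublevel $\Act_k^{-1}(-\infty,c]$ is automatically closed in $\Lambda_k$ (by continuity of $\Act_k$) and bounded when viewed inside $M^{\times k}$, the only obstruction to compactness is the possibility that a minimizing sequence approach the boundary of $\Lambda_k$ inside $M^{\times k}$, i.e.\ approach a configuration for which some pair of consecutive nodes $(\gamma(\tfrac ik),\gamma(\tfrac{i+1}k))$ has distance tending to $\rho_0$. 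The strategy is to show that, for $k$ large enough, such configurations have action exceeding any prescribed threshold $c$.

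First I would reduce to the case $\Lagr\geq \underline\ell\,|v|_q^2$ by adding a suitable positive constant to $\Lagr$; this shifts $\Act_k$ by the same constant and leaves compactness of sublevels unaffected (one only replaces the bound $c$ by $c+\mathrm{const}$). Next, for each $k$ with $1/k\leq\epsilon_0$, I would introduce the compact subset
\[
C_k:=\bigl\{\gamma\in\Lambda_k \,\bigm|\, \dist\bigl(\gamma(\tfrac ik),\gamma(\tfrac{i+1}k)\bigr)\leq \rho_0/2,\ \forall i=0,\ldots,k-1\bigr\},
\]
which under the evaluation diffeomorphism corresponds to a closed subset of $M^{\times k}$. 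The boundary $\partial C_k$ in $\Lambda_k$ consists of loops for which at least one consecutive distance equals $\rho_0/2$. It then suffices to prove
\[
\min\bigl\{\Act_k(\gamma)\,\big|\,\gamma\in\partial C_k\bigr\}\longrightarrow +\infty\quad\text{as }k\to\infty,
\]
because then for $k$ large enough the sublevel $\Act_k^{-1}(-\infty,c]$ is entirely contained in $\mathrm{int}(C_k)$, hence is a closed subset of the compact set $C_k$.

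The key analytic step is the lower bound. For $\gamma\in\partial C_k$ pick an index $i$ with $\dist(\gamma(\tfrac ik),\gamma(\tfrac{i+1}k))=\rho_0/2$. Using the quadratic lower bound on $\Lagr$ and the Cauchy--Schwarz inequality on the interval $[\tfrac ik,\tfrac{i+1}k]$ of length $1/k$,
\[
\int_{i/k}^{(i+1)/k}|\dot\gamma(t)|_{\gamma(t)}^2\,\diff t \;\geq\; k\left(\int_{i/k}^{(i+1)/k}|\dot\gamma(t)|_{\gamma(t)}\,\diff t\right)^{\!2}\!\geq\; k\,\dist\bigl(\gamma(\tfrac ik),\gamma(\tfrac{i+1}k)\bigr)^2,
\]
which gives $\Act_k(\gamma)\geq k\,\underline\ell\,(\rho_0/2)^2$. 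Since the right-hand side diverges as $k\to\infty$, choosing $\bar k(c)$ so that $\bar k\,\underline\ell\,(\rho_0/2)^2>c$ (and $1/\bar k\leq\epsilon_0$) completes the proof. I do not expect a serious obstacle here: everything follows from the quadratic growth and a single application of Cauchy--Schwarz; the only subtlety is the bookkeeping around the additive normalization of $\Lagr$, which is harmless.
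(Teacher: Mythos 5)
Your proof is correct and takes essentially the same approach as the paper: you define the same compact set $C_k$, reduce to showing that the minimum of $\Act_k$ on $\partial C_k$ diverges, and obtain the bound $\Act_k(\gamma)\geq k\,\underline\ell\,(\rho_0/2)^2$ by exactly the same Cauchy--Schwarz argument on the offending subinterval. (Minor note: you correctly range $i$ over $\{0,\ldots,k-1\}$ in the definition of $C_k$, where the paper's text has a small typo $\{1,\ldots,k-1\}$.)
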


Each  critical point $\gamma$ of the action functional $\Act$ belongs to the $k$-broken  Euler-Lagrange loop space $\Lambda_k$, up to choosing a sufficiently big $k$, and in particular it is a critical point of the discrete action $\Act_k$. It is easy to verify that the converse is also true, namely that the critical points of the discrete action $\Act_k$ are smooth solutions of the Euler-Lagrange system of $\Lagr$. The next statement discusses the invariance of the Morse index and nullity under discretization.

\begin{prop}\label{p:SameMorseNullPair_total}
Consider a contractible  $\gamma:\T\to M$ that is a smooth solution of the Euler-Lagrange system of $\Lagr$. Then,  for each sufficiently big $k\in\N$, the Morse index and nullity pair of $\Act$ and $\Act_k$ at $\gamma$ are the same.
\end{prop}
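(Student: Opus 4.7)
The plan is to construct a $\Hess\Act(\gamma)$-orthogonal splitting
\[ \W(\gamma^*\Tan M) = \Tan_\gamma \Lambda_k \oplus V_k, \]
on whose transverse factor $V_k$ the Hessian is positive definite, and thereby conclude that the index and nullity of $\Act$ at $\gamma$ agree with those of $\Act_k = \Act|_{\Lambda_k}$. The preliminary observation is that $\gamma \in \Lambda_k$ once $k$ is large enough: continuity gives $\dist(\gamma(i/k),\gamma((i+1)/k)) < \rho_0$ and $1/k \leq \epsilon_0$, and each restriction $\gamma|_{[i/k,(i+1)/k]}$ solves the Euler-Lagrange equation and lies in the convex set $\catC_{\gamma(i/k),\gamma((i+1)/k)}^{i/k,(i+1)/k}$ from \eqref{e:close_convex} on which the action is strictly convex, so it must coincide with the unique minimizer supplied by proposition \ref{p:fathi}.

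Next I would identify the two summands. By proposition \ref{p:fathi2}, the nodal evaluation
\[ \gamma \mapsto (\gamma(0),\gamma(1/k),\ldots,\gamma((k-1)/k)) \]
is a diffeomorphism from $\Lambda_k$ onto an open subset of $M^k$, so $\Tan_\gamma \Lambda_k$ is precisely the space of $W^{1,2}$ sections $J$ of $\gamma^*\Tan M$ whose restriction to each subinterval $[i/k,(i+1)/k]$ solves the Jacobi (linearized Euler-Lagrange) equation, parametrized by the $k$ nodal values $J(i/k)$. I set
\[ V_k := \{\eta \in \W(\gamma^*\Tan M) : \eta(i/k) = 0 \text{ for } i = 0,\ldots,k-1\}. \]
Given any $\xi \in \W(\gamma^*\Tan M)$, the linearized non-degeneracy \eqref{e:non_deg_gen_funct} on each short subinterval produces a unique piecewise Jacobi field $J$ matching $\xi$ at the nodes, and then $\xi - J \in V_k$. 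This realizes the direct sum splitting.

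I would then verify the two required properties. For $\Hess\Act(\gamma)$-orthogonality, I work in a trivialization of $\gamma^*\Tan M$ (available since $\gamma$ is contractible), split the integral defining $\Hess\Act(\gamma)[J,\eta]$ across the $k$ subintervals, and integrate by parts: the boundary contributions at the nodes vanish because $\eta(i/k) = 0$, and the interior integrands vanish because $J$ satisfies the Jacobi equation on each subinterval. For positive-definiteness of $\Hess\Act(\gamma)$ on $V_k$ for large $k$, I apply the estimate \eqref{e:pos_def_C} from the proof of proposition \ref{p:fathi} on each subinterval $[i/k,(i+1)/k]$, since \eqref{e:pos_def_C} already treats the Hessian applied to sections vanishing at the endpoints.

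With the splitting, Hessian-orthogonality, and positive-definiteness on $V_k$ established, the index and nullity of $\Hess\Act(\gamma)$ receive no contribution from $V_k$ and so reduce to those of its restriction to $\Tan_\gamma\Lambda_k$, which is exactly the Hessian of $\Act_k$ at $\gamma$. The main obstacle I anticipate is the bookkeeping in the splitting step: one needs the piecewise Jacobi field $J$ built from nodal data to lie in $\W(\T;M)$ globally, with $L^2$ control of $\dot J$ uniform in the nodal data. This is where \eqref{e:non_deg_gen_funct}, combined with the same short-interval estimates that power propositions \ref{p:fathi} and \ref{p:fathi2}, is essential, as it provides the continuous linear right-inverse to the nodal evaluation on $\W(\gamma^*\Tan M)$.
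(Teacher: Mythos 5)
Your proposal is correct, but organizes the argument differently from the paper. You construct the $\Hess\Act(\gamma)$-orthogonal splitting $\W(\gamma^*\Tan M)=\Tan_\gamma\Lambda_k\oplus V_k$, with $V_k$ the sections vanishing at all nodes $i/k$, prove that $\Hess\Act(\gamma)$ is positive definite on $V_k$ for large $k$ by invoking the short-interval estimate \eqref{e:pos_def_C} on each $[i/k,(i+1)/k]$, and then read off equality of index and nullity simultaneously; this is the classical broken-geodesic splitting of Morse theory transported to the Lagrangian setting. The paper instead treats the two quantities by separate lemmas: lemma~\ref{l:same_Null} shows that no broken Jacobi field with a genuine corner lies in $\ker\Hess\Act_k(\gamma)$, by pairing against a test field concentrated at the corner (and the computation \eqref{e:hess_restr} there is exactly the boundary term that you discard by orthogonality), while lemma~\ref{l:same_Morse} transports a maximal negative subspace spanned by piecewise-affine loops into $\Tan_\gamma\Lambda_k$ using that per-interval replacement by Jacobi fields can only decrease the second variation. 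Your route yields both conclusions at once and is arguably more transparent, at the cost of having to check that the splitting is a topological direct sum and that the per-interval positivity bound is uniform in $i$; you correctly flag both points, and both do follow from \eqref{e:non_deg_gen_funct}, the finite-dimensionality of $\Tan_\gamma\Lambda_k$, and the fact that the lower bound $F(\rho,\epsilon)$ in \eqref{e:pos_def_C} depends only on $\rho=\dist(\gamma(i/k),\gamma((i+1)/k))$, $\epsilon=1/k$ and the global constants, all of which can be made small or kept bounded uniformly over the subdivision.
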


We will split the proof of this proposition in several lemmas, which will take the remaining of this subsection. First of all, since the statement is of a local nature, let us adopt suitable local coordinates in the loop space. Being $\gamma$ a smooth contractible loop, we can consider the chart of $\W(\T;M)$ given by
\[\exp_\gamma^{-1}: \catU_\gamma \to \W(\gamma^*\Tan M)\simeq W^{1,2}(\T;\R^N),\]   
see section~\ref{s:free_loop_space} for the notation.  Let $U$ be a small neighborhood of the origin  in  $\R^N$ and let $\pi_{\gamma}:\T \times U \times \R^N  \hookrightarrow \T\times \Tan M$ be the embedding defined by
\begin{align*}
\pi_{\gamma}(t,q,v)=\cbra{ t, \exp_{{\gamma}(t)}(q), \diff ({\exp_{{\gamma}(t)}})(q)v +
 \der{}{t} \exp_{{\gamma}(t)}(q) },\s\s
\forall(t,q,v)\in\T\times U\times\R^N.
\end{align*}
The pulled-back Lagrangian $\Lagr\circ \pi_{\gamma} : \T \times U \times \R^N \to \R$ is again convex quadratic-growth, since conditions \textbf{(Q1)} and \textbf{(Q2)} are invariant with respect to coordinate  transformations of the form $\pi_{\gamma}$ (up to changing the constants $\ell_0$ and $\ell_1$ in the definition). Moreover, the  pulled-back functional $\Act\circ \exp_{\gamma}$ is the Lagrangian action functional associated to the convex quadratic-growth  Lagrangian $\Lagr\circ \pi_{\gamma}$, i.e.  
\begin{align*}
\Act\circ \exp_{\gamma}( \xi ) =  \int_0^1 \Lagr\circ\pi_{\gamma}(t,\xi(t),\dot\xi(t))\, \diff t.
\end{align*}
From now on, we will simply write $\Act$ and $\Lagr$ for  $\Act\circ \exp_{\gamma}$ and $\Lagr\circ\pi_{\gamma}$ respectively, so that $\gamma$ will  be identified with  the point $\bm0$ in the Hilbert space $W^{1,2}(\T;\R^N)$.  Moreover, for each $k\in\N$ such that $\gamma$ belongs to $\Lambda_k$, we identify (an open neighborhood of $\gamma$ in) the $k$-broken Euler-Lagrange loop space $\Lambda_k$ with a finite dimensional submanifold of $\W(\T;\R^N)$ containing $\bm0$.

We introduce the   quadratic Lagrangian $\lagr:\T\times\R^N\times\R^N\to\R$ given by
\begin{multline}\label{e:linLagr}
\s\s\lagr(t,q,v)=\frac12 \abra{a(t)v,v} + \abra{b(t)q,v} + \frac12 \abra{c(t)q, q},\\
\forall (t,q,v)\in\T\times\R^N\times\R^N,
\end{multline}
where, for each $t\in\T$,  $a(t)$, $b(t)$ and $c(t)$ are the $N\times N$ matrices defined by
\[ a_{ij}(t):=\frac{\partial^2 \Lagr}{\partial v_i\,\partial v_j} (t,0,0),\s 
b_{ij}(t):=\frac{\partial^2 \Lagr}{\partial v_i\,\partial q_j} (t,0,0),\s
c_{ij}(t):=\frac{\partial^2 \Lagr}{\partial q_i\,\partial q_j} (t,0,0). \]
A straightforward computation shows that the Euler-Lagrange system associated to $\lagr$ is given by the following linear system of ordinary differential equations for curves $\sigma$ in $\R^N$
\begin{align}\label{e:EulLagr_lin}
a\,\ddot\sigma + (b + \dot a - b^T )\, \dot\sigma + (\dot b - c)\, \sigma =0. 
\end{align}
This is precisely the linearization of the Euler-Lagrange system of $\Lagr$ along the periodic solution $\gamma\equiv \bm0$. The 1-periodic solutions $\sigma:\T\to\R^N$ of~\eqref{e:EulLagr_lin} are precisely the critical points of the action functional $\A:W^{1,2}(\T;\R^N)\to\R$ associated to $\lagr$,  given as usual by
\[ \A(\xi)= \int_0^1 \lagr(t,\xi(t),\dot\xi(t))\,\diff t,\s\s\forall \xi\in\W(\T;\R^N). \]

The following lemma characterizes the elements of the tangent space of the $k$-broken Euler-Lagrange loop space $\Lambda_k$ at $\gamma\equiv\bm0$.
\begin{lem}\label{l:TLambda}
The tangent space $\Tan_{\bm0}\Lambda_k$ is the space of continuous and piecewise smooth loops $\sigma:\T\to\R^N$ such that, for each $h\in\gbra{0,...,k-1}$, the restriction $\sigma|_{[h/k,(h+1)/k]}$ is a  solution of the Euler-Lagrange system~\eqref{e:EulLagr_lin}.
\begin{proof}
By definition of tangent space, every $\sigma\in\Tan_{\bm0}\Lambda_k$ is a continuous loop $\sigma:\T\to\R^N$  given by 
\begin{align}\label{e:der_of_var} 
\sigma(t)=\left.\pder{}{s}\right|_{s=0}\Sigma(s,t), 
\end{align}
for some continuous $\Sigma:(-\epsilon,\epsilon)\times\T\to\R^N$ such that, for all $h\in\gbra{0,...,k-1}$, the restriction $\Sigma|_{(-\epsilon,\epsilon)\times [h/k,(h+1)/k]}$ is smooth, $\Sigma(s,\cdot)\in\Lambda_k$ for all $s\in(-\epsilon,\epsilon)$ and $\Sigma(0,\cdot)\equiv\bm 0$. Namely $\Sigma$ is a   piecewise smooth variation of the constant loop $\bm0$ such that  the loops $\Sigma_s=\Sigma(s,\cdot)$  satisfy the Euler-Lagrange system of $\Lagr$ in the intervals $[h/k,(h+1)/k]$ for all $h\in\gbra{0,...,k-1}$, i.e.
\begin{align*}
    \partial^2_{vv} \Lagr  (t,\Sigma_s,\dot\Sigma_s)\, \ddot\Sigma_s  + 
{\partial^2_{vq} \Lagr}  (t,\Sigma_s,\dot\Sigma_s)\, \dot\Sigma_s  +
{\partial^2_{vt} \Lagr}  (t,\Sigma_s,\dot\Sigma_s) -
{\partial_q \Lagr} (t,\Sigma_s,\dot\Sigma_s)=0
\end{align*}
By differentiating the above equation with respect to $s$ in $s=0$, we obtain the Euler-Lagrange system~\eqref{e:EulLagr_lin} for the loop $\sigma$ (as before, satisfied on the  intervals $[h/k,(h+1)/k]$ for all $h\in\gbra{0,...,k-1}$).  Vice versa,   a continuous loop $\sigma:\T\to\R^N$ whose restrictions $\sigma|_{[h/k,(h+1)/k]}$  satisfy \eqref{e:EulLagr_lin} is of the form \eqref{e:der_of_var} for some $\Sigma$ as above, and therefore it is an element of $\Tan_{\bm0}\Lambda_k$.
\end{proof}
\end{lem}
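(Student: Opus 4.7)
The plan is to exploit the identification, noted just before the lemma, of $\Lambda_k$ with an open subset of $M^k$ via the breakpoint evaluation
\[ \rho_k:\Lambda_k\to M^k,\s\s\s \gamma\mapsto\bigl(\gamma(0),\gamma(\tfrac1k),\ldots,\gamma(\tfrac{k-1}k)\bigr). \]
This diffeomorphism induces a canonical linear isomorphism
\[ \diff\rho_k(\bm 0):\Tan_{\bm 0}\Lambda_k\toup^\simeq(\R^N)^k,\s\s\s \sigma\mapsto\bigl(\sigma(0),\sigma(\tfrac1k),\ldots,\sigma(\tfrac{k-1}k)\bigr); \]
in particular $\dim\Tan_{\bm 0}\Lambda_k=kN$. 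Let $V_k$ denote the linear space of continuous loops $\sigma:\T\to\R^N$ whose restrictions to each $[h/k,(h+1)/k]$ solve the linearized Euler-Lagrange system~\eqref{e:EulLagr_lin}. I would prove $\Tan_{\bm 0}\Lambda_k=V_k$ in three steps: (i)~$\Tan_{\bm 0}\Lambda_k\subseteq V_k$; (ii)~the breakpoint-evaluation map $V_k\to(\R^N)^k$ is injective; (iii)~conclude by dimension count.

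For (i), take $\sigma\in\Tan_{\bm 0}\Lambda_k$ and realize it as $\sigma=\partial_s\Sigma|_{s=0}$ for a piecewise-smooth variation $\Sigma$ as in the statement, with $\Sigma_s\in\Lambda_k$ and $\Sigma_0\equiv\bm 0$. Each $\Sigma_s$ is a piecewise action minimizer for $\Lagr$, hence satisfies the Euler-Lagrange system of $\Lagr$ on every $[h/k,(h+1)/k]$; differentiating that identity in $s$ at $s=0$ yields~\eqref{e:EulLagr_lin} for $\sigma|_{[h/k,(h+1)/k]}$, while continuity of $\sigma$ is inherited from $\Sigma$.

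For (ii), suppose $\sigma\in V_k$ vanishes at every breakpoint $h/k$, and let $\sigma_h:=\sigma|_{[h/k,(h+1)/k]}$, which belongs to $\W_0([h/k,(h+1)/k];\R^N)$. Since \eqref{e:EulLagr_lin} is the Euler-Lagrange equation of the quadratic action $\A$ associated to $\lagr$, the loop $\sigma_h$ is a critical point of $\A$ on $\W_0$, and hence $\A(\sigma_h)=0$ by Euler's identity for quadratic forms. The second variation of $\Act$ at $\bm 0$ coincides up to a factor of two with $\A$ on variations supported in $[h/k,(h+1)/k]$ with vanishing endpoint values, so $\Hess\Act(\bm 0)[\tilde\sigma_h,\tilde\sigma_h]=0$, where $\tilde\sigma_h$ denotes the extension of $\sigma_h$ by zero to $\T$. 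However, as soon as $1/k\leq\epsilon_0$, the positive-definiteness estimate~\eqref{e:pos_def_C} from the proof of Proposition~\ref{p:fathi}, applied on $[h/k,(h+1)/k]$ with $q_0=q_1=\bm 0$ at the base point $\zeta\equiv\bm 0$, forces $\Hess\Act(\bm 0)$ to be positive definite on precisely this subspace of variations. Therefore $\tilde\sigma_h\equiv\bm 0$ for every $h$, i.e.\ $\sigma\equiv\bm 0$.

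Combining (i) and (ii), the breakpoint-evaluation map $V_k\to(\R^N)^k$ is injective and its image contains $\diff\rho_k(\bm 0)(\Tan_{\bm 0}\Lambda_k)=(\R^N)^k$, so it is an isomorphism; hence $\dim V_k=kN=\dim\Tan_{\bm 0}\Lambda_k$, and the inclusion from (i) is an equality. I expect step (ii) to be the only substantive one: its main technical point is simply to recognize that the positive-definiteness estimate~\eqref{e:pos_def_C} established in the proof of Proposition~\ref{p:fathi} remains available at the base point $\bm 0$ on each subinterval of length at most $\epsilon_0$.
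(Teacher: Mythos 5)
Your proof is correct, and its first half (the inclusion $\Tan_{\bm0}\Lambda_k\subseteq V_k$ by differentiating a variation through broken Euler--Lagrange loops) is exactly the paper's argument. Where you genuinely diverge is in the converse inclusion: the paper disposes of it in one sentence, asserting that every continuous piecewise solution of~\eqref{e:EulLagr_lin} arises as the $s$-derivative of some variation $\Sigma(s,\cdot)$ in $\Lambda_k$, whereas you replace this by injectivity of the breakpoint evaluation on $V_k$ together with the dimension count $\dim\Tan_{\bm0}\Lambda_k=kN$. Your injectivity step is sound: if $\sigma\in V_k$ vanishes at all breakpoints, then each restriction is a critical point of the homogeneous quadratic functional $\A$ on $\W_0([h/k,(h+1)/k];\R^N)$, so $\A$ vanishes there by Euler's identity, hence $\Hess\Act(\bm0)$ vanishes on the extension by zero via~\eqref{e:HessXXact}; on the other hand $\gamma|_{[h/k,(h+1)/k]}$ lies in the convex set $\catC^{t_0,t_1}_{q_0,q_1}$ of proposition~\ref{p:fathi} (being the unique short minimizer between its endpoints), so~\eqref{e:pos_def_C} makes that restricted Hessian positive definite, forcing $\sigma\equiv\bm0$. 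In fact your argument is arguably the more complete one: to justify the paper's ``vice versa'' one would naturally take $\Sigma(s,\cdot)$ to be the broken minimizer with breakpoints $\exp_{\gamma(h/k)}(s\,\sigma(h/k))$, observe that its $s$-derivative is an element of $V_k$ with the same breakpoint values as $\sigma$, and then still need precisely your injectivity statement to conclude that it equals $\sigma$. The only bookkeeping point worth flagging is that the positive-definiteness estimate should be invoked for the original Lagrangian $\Lagr$ on $M$ at the curves $\gamma|_{[h/k,(h+1)/k]}$ (and then transported to the localized coordinates, where positive definiteness of the Hessian is preserved), rather than literally ``at $q_0=q_1=\bm0$'' for the pulled-back Lagrangian, whose own constants $\epsilon_0,\rho_0$ could a priori differ; with that reading your argument works for every $k$ with $\gamma\in\Lambda_k$, which is the range the lemma requires.
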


The null-space of the Hessian of $\Act$ at $\bm0$ can be characterized as follows.

\begin{lem}\label{l:NullHess}
The null-space of $\Hess\Act(\bm0)$ consists of those smooth loops $\sigma:\T\to\R^N$ that are  solutions of the Euler-Lagrange system~\eqref{e:EulLagr_lin}.
\begin{proof}
For every $\sigma,\xi\in\W(\T;\R^N)$ we have
\begin{align*} 
\Hess\Act(\bm0)[\sigma,\xi]=&
  \int_0^1 
\cbra{ \langle a\,\dot\sigma,\dot\xi \rangle + 
\langle b\,\sigma,\dot\xi\rangle + 
\langle b^T\,\dot\sigma,\xi\rangle + 
\langle c\,\sigma,\xi\rangle }  
\diff t
= \diff \A(\sigma)\xi.
\end{align*}
Therefore $\sigma$ is in the null-space of $\Hess\Act(\bm0)$ if and only of it is a critical point of $\A$, that is if and only if it is a (smooth) solution of the Euler-Lagrange system~\eqref{e:EulLagr_lin}.
\end{proof}
\end{lem}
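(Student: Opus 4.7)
The plan is to reduce the computation of $\ker \Hess\Act(\bm0)$ to the critical points of the quadratic functional $\A$ associated to $\lagr$, and then invoke standard linear ODE regularity. First I would compute $\Hess\Act(\bm0)[\sigma, \xi]$ directly from the definition of $\Act$ via two differentiations under the integral sign. Since the Gateaux-Hessian at $\bm0$ depends only on the second-order Taylor expansion of $\Lagr(t,\cdot,\cdot)$ at the origin, which is by construction the quadratic Lagrangian $\lagr$ defined in~\eqref{e:linLagr}, a direct computation yields
\[
\Hess\Act(\bm0)[\sigma, \xi] = \int_0^1 \left\{ \langle a\dot\sigma, \dot\xi\rangle + \langle b\sigma, \dot\xi\rangle + \langle b^T \dot\sigma, \xi\rangle + \langle c\sigma, \xi\rangle \right\} dt.
\]
Since $\A$ is a purely quadratic form on $\W(\T;\R^N)$ (it has no linear or constant part because $\lagr$ vanishes to second order at $(t, 0, 0)$), its Gateaux differential $d\A(\sigma)\xi$ is computed by polarization and produces exactly the same expression. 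Hence $\sigma \in \ker \Hess\Act(\bm0)$ if and only if $\sigma$ is a critical point of $\A$ on $\W(\T;\R^N)$.

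The second step is to show that the $\W(\T;\R^N)$ critical points of $\A$ are precisely the smooth $1$-periodic solutions of~\eqref{e:EulLagr_lin}. Given $\sigma$ with $d\A(\sigma) = 0$, testing against smooth variations $\xi$ compactly supported in a subinterval of $\T$ and integrating by parts in the terms involving $\dot\xi$ yields~\eqref{e:EulLagr_lin} in the weak sense. Since $a(t) = \partial^2_{vv}\Lagr(t, 0, 0)$ is positive definite (inherited from~\textbf{(Q1)}) and $a, b, c$ are smooth, one can solve~\eqref{e:EulLagr_lin} algebraically for $\ddot\sigma$ in terms of $\sigma$ and $\dot\sigma$, and a standard bootstrap then lifts $\sigma$ from $\W(\T;\R^N)$ to $C^\infty(\T;\R^N)$. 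The converse inclusion is immediate: any smooth $1$-periodic solution of~\eqref{e:EulLagr_lin} satisfies $d\A(\sigma) = 0$ by reversing the integration by parts (the boundary terms cancel by periodicity on $\T$), and therefore lies in $\ker\Hess\Act(\bm0)$ by the identity above.

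The main technical delicacy is that $\Act$ is only $C^{1,1}$ and twice Gateaux-differentiable at $\bm0$ rather than genuinely $C^2$, so some care is needed in asserting an identity for its Hessian. This difficulty is bypassed by the quadratic approximation argument: the Gateaux-Hessian of $\Act$ at $\bm0$ only sees the pure second-order part of $\Lagr$, which coincides with the smooth quadratic Lagrangian $\lagr$, and $\Hess\A(\bm0)$ equals $\A$'s own defining bilinear form by homogeneity, so no genuine $C^2$ regularity of $\Act$ is needed in the argument.
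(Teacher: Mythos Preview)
Your proposal is correct and follows essentially the same approach as the paper: identify $\Hess\Act(\bm0)[\sigma,\xi]$ with $\diff\A(\sigma)\xi$ and then invoke the correspondence between critical points of $\A$ and solutions of the linearized Euler-Lagrange system. The paper's proof is terser because it already stated, just before the lemma, that the $1$-periodic solutions of~\eqref{e:EulLagr_lin} are precisely the critical points of $\A$; your explicit bootstrap argument for smoothness and your remark on the Gateaux-Hessian regularity simply spell out details the paper leaves implicit.
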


As a consequence of   lemmas~\ref{l:TLambda} and \ref{l:NullHess}, the null-space of $\Hess\Act(\bm0)$ is contained in $\Tan_{\bm0}\Lambda_k$, and therefore it is  contained in the null-space of the Hessian of the discrete action $\Hess\Act_k(\bm0)$. This inclusion is actually an equality, as shown by the following.
\begin{lem}\label{l:same_Null}
$\Hess\Act(\bm0)$ and $\Hess\Act_k(\bm0)$ have the same null-space, and in particular $\Act$ and $\Act_k$ have the same nullity at $\bm0$.
\begin{proof}
We only need to show that any curve $\sigma\in\Tan_{\bm0}\Lambda_k$ that is not everywhere smooth cannot be   in the null-space of $\Hess\Act_k(\bm0)$. In fact, since $\sigma$ is always smooth outside the points $\sfrac hk$ (for $h\in\gbra{0,...,k-1}$), for each $\xi\in\Tan_{\bm0}\Lambda_k$ we have
\begin{align}
\nonumber \Hess\Act_k(\bm0)[\sigma,\xi]=&
 \sum_{h=0}^{k-1}
  \int_{h/k}^{(h+1)/k} 
\cbra{ \langle a\,\dot\sigma,\dot\xi \rangle + 
\langle b\,\sigma,\dot\xi\rangle + 
\langle b^T\,\dot\sigma,\xi\rangle + 
\langle c\,\sigma,\xi\rangle }  
\diff t\\
\nonumber
=&
 \sum_{h=0}^{k-1}
  \int_{h/k}^{(h+1)/k} 
\langle 
\underbrace
{-a\,\ddot\sigma - b\,\dot\sigma - \dot a\,\dot\sigma - \dot b\, \sigma+ b^T\,\dot\sigma + c\,\sigma}_{=0}
, 
\xi\rangle\, \diff t
\\
\nonumber
&+\sum_{h=0}^{k-1} 
 \langle a\,\dot\sigma+b\,\sigma,\xi\rangle
\Bigr|_{(h/k)^+}^{((h+1)/k)^-} 
\\
\label{e:hess_restr}
=&
\sum_{h=0}^{k-1} 
 \langle {
 a(\sfrac hk) [ \dot\sigma(\sfrac hk\sm-) 
 -
  \dot\sigma(\sfrac hk\sm+) ]
  ,\xi(\sfrac hk)  
 }\rangle .
\end{align}
By assumption, we have that  $\dot\sigma(\sfrac {h}k \sm+)\neq\dot\sigma(\sfrac {h}k\sm-)$  for some $h\in\gbra{0,...,k-1}$, and therefore 
\[a(\sfrac {h}k)[\dot\sigma(\sfrac {h}k\sm+)-\dot\sigma(\sfrac {h}k\sm-)]\neq\bm0.\]
Here, we have used the fact that the matrix $a(\sfrac {h}k)$ is invertible, since the Lagrangian $\Lagr$ satisfies \textbf{(Q1)} (see section~\ref{s:Lagr_settings}). Now, consider $\xi\in\Tan_{\bm0}\Lambda_k$ given by 
\[  
\xi({\sfrac lk})
=
\left\{
  \begin{array}{lccl}
    a(\sfrac {h}k)[\dot\sigma(\sfrac {h}k\sm+)-\dot\sigma(\sfrac {h}k\sm-)], & & & l=h, \\ 
    \bm0, & & & l\in\gbra{0,...,k-1},\ l\neq h. \\ 
  \end{array}
\right.
\]
By~\eqref{e:hess_restr}, we have
\[
\Hess\Act_k(\bm0)[\sigma,\xi]= 
\abs{ 
a(\sfrac {h}k)[\dot\sigma(\sfrac {h}k\sm+)-\dot\sigma(\sfrac {h}k\sm-)]
 }^2 \neq0,
\]
and therefore we conclude that $\sigma$ is not in the null-space of $\Hess\Act_k(\bm0)$.
\end{proof}
\end{lem}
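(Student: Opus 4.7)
The plan is to establish the two inclusions separately, noting that the nontrivial content is the inclusion $\ker\Hess\Act_k(\bm0)\subseteq\ker\Hess\Act(\bm0)$, since the reverse one is a direct consequence of Lemmas~\ref{l:TLambda} and \ref{l:NullHess}. Indeed, by Lemma~\ref{l:NullHess} every $\sigma\in\ker\Hess\Act(\bm0)$ is a smooth global solution of~\eqref{e:EulLagr_lin} and therefore, by Lemma~\ref{l:TLambda}, lies in $\Tan_{\bm0}\Lambda_k$; since $\Act_k$ is the restriction of $\Act$ to $\Lambda_k$, the restriction of $\Hess\Act(\bm0)$ to $\Tan_{\bm0}\Lambda_k$ coincides with $\Hess\Act_k(\bm0)$, so such a $\sigma$ is also in $\ker\Hess\Act_k(\bm0)$. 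The nullity statement in the lemma will then follow once the two null-spaces are shown to agree.

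For the harder inclusion I would argue by contradiction: assume $\sigma\in\Tan_{\bm0}\Lambda_k\cap\ker\Hess\Act_k(\bm0)$ is not globally smooth, so by Lemma~\ref{l:TLambda} it satisfies~\eqref{e:EulLagr_lin} on every subinterval $[h/k,(h+1)/k]$ but has a genuine kink at some break point, i.e.\ $\dot\sigma(\tfrac hk{}^+)\neq\dot\sigma(\tfrac hk{}^-)$ for some $h$. The strategy is to test the Hessian against a carefully chosen $\xi\in\Tan_{\bm0}\Lambda_k$ and reach a contradiction. The key computation is the piecewise integration by parts: using the symmetric bilinear form expression
\begin{align*}
\Hess\Act_k(\bm0)[\sigma,\xi]=\sum_{h=0}^{k-1}\int_{h/k}^{(h+1)/k}\!\bigl[\langle a\dot\sigma,\dot\xi\rangle+\langle b\sigma,\dot\xi\rangle+\langle b^T\dot\sigma,\xi\rangle+\langle c\sigma,\xi\rangle\bigr]\,\diff t,
\end{align*}
one integrates by parts on each subinterval; the bulk terms collapse to $\langle -a\ddot\sigma-(b+\dot a-b^T)\dot\sigma-(\dot b-c)\sigma,\xi\rangle$, which vanishes identically because $\sigma$ solves~\eqref{e:EulLagr_lin} piecewise, and one is left only with boundary terms which, after telescoping, reduce to a sum over the break points of inner products of $\xi(h/k)$ with the jump vectors $a(h/k)[\dot\sigma(h/k^-)-\dot\sigma(h/k^+)]$.

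The final step is to exploit the freedom to prescribe the values of $\xi\in\Tan_{\bm0}\Lambda_k$ at the break points. By Lemma~\ref{l:TLambda} combined with the fact, recalled in Section~\ref{s:discr_act_funct}, that $\Lambda_k$ is diffeomorphic via $\gamma\mapsto(\gamma(0),\gamma(1/k),\dots,\gamma((k-1)/k))$ to an open subset of $M^k$, one can realize any assignment of values at the nodes $h/k$ by a (unique) element of $\Tan_{\bm0}\Lambda_k$. Choose $\xi$ so that $\xi(h/k)$ equals the jump vector $a(h/k)[\dot\sigma(h/k^-)-\dot\sigma(h/k^+)]$ at the offending $h$ and vanishes at every other node; assumption \textbf{(Q1)} ensures that $a(h/k)$ is positive definite, hence invertible, so this chosen nodal value is nonzero. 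Plugging this $\xi$ into the above reduction yields $\Hess\Act_k(\bm0)[\sigma,\xi]=|a(h/k)[\dot\sigma(h/k^-)-\dot\sigma(h/k^+)]|^2>0$, contradicting $\sigma\in\ker\Hess\Act_k(\bm0)$.

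The main obstacle I anticipate is purely bookkeeping: carrying out the integration by parts cleanly on each subinterval, keeping track of the one-sided limits of $\dot\sigma$ at the break points, and verifying that the telescoped boundary sum takes the clean form above without spurious contributions from $b\sigma$ (which is continuous across nodes, since $\sigma$ is continuous, and therefore produces no jump). Once that calculation is correctly organized, the contradiction is immediate from the nondegeneracy of $a$ supplied by \textbf{(Q1)}, and the statement on the nullities follows automatically from the equality of null-spaces.
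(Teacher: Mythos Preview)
Your proposal is correct and follows essentially the same approach as the paper: both reduce the Hessian via piecewise integration by parts to a sum of jump terms at the nodes, then exploit the diffeomorphism $\Lambda_k\simeq$ open subset of $M^k$ to prescribe $\xi$ at the break points and invoke \textbf{(Q1)} for the invertibility of $a(h/k)$. Your remark that the $b\sigma$ boundary contribution produces no jump because $\sigma$ is continuous is a detail the paper leaves implicit, and your sign convention for the chosen $\xi$ is in fact slightly cleaner than the paper's.
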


In order to conclude the proof of proposition~\ref{p:SameMorseNullPair_total} we only need to prove the invariance of the Morse index under discretization.

\begin{lem}\label{l:same_Morse}
For all $k\in\N$ sufficiently big, the functionals $\Act$ and $\Act_k$ have the same Morse index at $\bm0$.
\begin{proof}
For every  $\sigma\in\W(\T;\R^N)$ we have 
\begin{equation}\label{e:HessXXact}
\begin{split}
\Hess\Act(\bm0)[\sigma,\sigma]=&
  \int_0^1 
\cbra{ \langle a\,\dot\sigma,\dot\sigma \rangle + 
\langle b\,\sigma,\dot\sigma\rangle + 
\langle b^T\,\dot\sigma,\sigma\rangle + 
\langle c\,\sigma,\sigma\rangle }  
\diff t
= \\
=&
2\int_0^1 
\lagr(t,\sigma(t),\dot\sigma(t))\,  
\diff t = 2\,\A(\sigma).
\end{split}
\end{equation}
Let $\iota(\bm0)$ be the Morse index of $\Act$ at $\bm0$. By definition, there exists a $\iota(\bm0)$-dimensional vector subspace $V\subseteq\W(\T;\R^N)$ over which $\Hess\Act(\bm0)$ is negative definite, i.e.\
\begin{align}\label{e:Vneg}
\Hess\Act(\bm0)[\sigma,\sigma] < 0,\s\forall \sigma\in V\setminus\gbra{\bm0}. 
\end{align}
By density, for each $k\in\N$ sufficiently big we can choose $V$ to be composed of $k$-piecewise affine curves. Namely  we can choose $V$ such that, for each $\sigma\in V$, we have
\begin{align}
\label{e:pw_aff}
\sigma({\sfrac{h+t}k})=(1-t)\,\sigma({\sfrac{h}k})+t\,\sigma({\sfrac{h+1}k}),
&& \forall  t\in[0,1],\ h\in\gbra{0,...,k-1}.
\end{align}
Now, let us define a linear map $K:V\to\Tan_{\bm0}\Lambda_k$ as $K(\sigma)=\tilde\sigma$, where $\tilde\sigma$ is the unique element in $\Tan_{\bm0}\Lambda_k$ such that $\sigma(\frac hk)=\tilde\sigma(\frac hk)$ for each $h\in\gbra{0,...,k-1}$. 
Notice that $K$ is injective. In fact, if $K(\sigma)=\bm0$, we have $\sigma(\frac hk)=\bm0$ for each $h\in\gbra{0,...,k-1}$ and, by~\eqref{e:pw_aff}, we conclude $\sigma=\bm0$. Hence $\tilde V=K(V)$ is a $\iota(\bm0)$-dimensional vector subspace of $\Tan_{\bm0}\Lambda^k$.

In order to conclude we just have to show that $\Hess\Act_k(\bm0)$ is negative definite over the vector space $\tilde V$. To this aim, consider an arbitrary $\tilde\sigma\in\tilde V\setminus\gbra{\bm0}$ and put $\sigma=K^{-1}(\tilde\sigma)\in V\setminus\gbra{\bm0}$. For each $h\in\gbra{0,...,k-1}$ the curve $\tilde\sigma|_{[h/k,(h+1)/k]}$ is an action minimizer with respect to the Lagrangian $\lagr$, and therefore $\A(\tilde\sigma)\leq\A(\sigma)$. By~\eqref{e:HessXXact} and~\eqref{e:Vneg} we conclude
\[\Hess\Act_k(\bm0)[\tilde\sigma,\tilde\sigma]= 
2\,\A( \tilde\sigma) \leq 
2\,\A( \sigma)  =
\Hess\Act(\bm0)[\sigma,\sigma] <0.
\qedhere
\]
\end{proof}
\end{lem}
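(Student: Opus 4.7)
The direction ``Morse index of $\Act_k$ at $\bm0$ does not exceed Morse index of $\Act$ at $\bm0$'' is immediate: the embedding $\Tan_{\bm0}\Lambda_k\hookrightarrow\W(\T;\R^N)$ together with the identity $\Act_k=\Act|_{\Lambda_k}$ forces $\Hess\Act_k(\bm0)$ to be the restriction of $\Hess\Act(\bm0)$ to $\Tan_{\bm0}\Lambda_k$, so any subspace of $\Tan_{\bm0}\Lambda_k$ that is negative definite for $\Hess\Act_k(\bm0)$ is automatically such for $\Hess\Act(\bm0)$. All the work lies in the reverse bound.

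For the reverse, I would exploit the identity \eqref{e:HessXXact}, namely $\Hess\Act(\bm0)[\sigma,\sigma]=2\,\A(\sigma)$ with $\A$ the action of the quadratic Lagrangian $\lagr$ from \eqref{e:linLagr}. Let $\iota(\bm0)$ denote the Morse index of $\Act$ at $\bm0$ and fix a subspace $V\subset\W(\T;\R^N)$ of dimension $\iota(\bm0)$ on which $\Hess\Act(\bm0)$ is negative definite. Since piecewise affine loops are dense in $\W(\T;\R^N)$ and negative definiteness of a continuous symmetric bilinear form on a finite-dimensional subspace is an open condition, for $k$ sufficiently large I can replace $V$ by a $\iota(\bm0)$-dimensional subspace $V'$ whose elements are affine on each subinterval $[h/k,(h+1)/k]$. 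I then define a linear map $K:V'\to\Tan_{\bm0}\Lambda_k$ by sending $\sigma\in V'$ to the unique $\tilde\sigma\in\Tan_{\bm0}\Lambda_k$ agreeing with $\sigma$ at every grid point $h/k$; existence and uniqueness of $\tilde\sigma$ come from lemma~\ref{l:TLambda} and the unique solvability of \eqref{e:EulLagr_lin} with prescribed boundary values on short intervals (for $k$ large). The map $K$ is injective because a piecewise affine loop is determined by its vertex values, so $\tilde V:=K(V')$ is again $\iota(\bm0)$-dimensional, and it remains to prove $\A(\tilde\sigma)<0$ for every nonzero $\tilde\sigma\in\tilde V$.

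The central claim I would prove is the comparison $\A(\tilde\sigma)\leq\A(\sigma)$ with $\sigma=K^{-1}\tilde\sigma$, which combined with $\A(\sigma)<0$ and \eqref{e:HessXXact} immediately yields the sought negative definiteness of $\Hess\Act_k(\bm0)$ on $\tilde V$. Since $\tilde\sigma$ and $\sigma$ share endpoints on each subinterval $[h/k,(h+1)/k]$, the comparison reduces to showing that every piece $\tilde\sigma|_{[h/k,(h+1)/k]}$ minimizes the $\lagr$-action among $\W$-curves with those endpoints. It is a critical point of that fixed-endpoint action by lemma~\ref{l:TLambda}, so I would mimic the argument surrounding \eqref{e:pos_def_C} in the proof of proposition~\ref{p:fathi} --- combining \textbf{(Q1)} applied to $\lagr$ with the Poincar\'e-type bound $\|\sigma\|_{L^\infty}\le\sqrt{1/k}\,\|\dot\sigma\|_{L^2}$ for $\W$-curves vanishing at the endpoints of $[h/k,(h+1)/k]$ --- to conclude that, for $k$ large, the Hessian of the $\lagr$-action on $\W_0([h/k,(h+1)/k];\R^N)$ is positive definite. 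As $\A$ is quadratic, such a critical point is then the unique global minimum on its affine fibre, and summing over $h$ yields $\A(\tilde\sigma)\leq\A(\sigma)$. The main obstacle is verifying that the threshold $\bar k$ can be chosen uniformly in $h$ and independently of $\sigma$, but this is painless since the coefficient matrices $a,b,c$ of $\lagr$ are bounded on the compact circle $\T$.
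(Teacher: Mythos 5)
Your proposal is correct and follows essentially the same route as the paper: exploit $\Hess\Act(\bm0)[\sigma,\sigma]=2\A(\sigma)$, approximate a maximal negative subspace by $k$-piecewise affine loops, transport it to $\Tan_{\bm0}\Lambda_k$ by matching vertex values, and use the fibrewise minimizing property of broken $\lagr$-trajectories to compare $\A(\tilde\sigma)\leq\A(\sigma)$. The only difference is that you spell out why each piece $\tilde\sigma|_{[h/k,(h+1)/k]}$ is a minimizer (via the Poincar\'e bound and the \eqref{e:pos_def_C}-style convexity estimate), a point the paper states without justification.
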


\subsection{Homotopic approximation  of the action sublevels}\label{s:hom_sub} 
We want to show that the sublevels of the action $\Act$ deformation retract onto the corresponding sublevels of the discrete action $\Act_k$, for all the sufficiently big $k\in\N$. To begin with, we need the following.

\begin{prop}\label{p:estimate_sub}
For each $\rho>0$ and  $c\in\R$ there exists  $\bar\epsilon=\bar\epsilon(\Lagr,\rho,c)>0$ such that, for each $\zeta\in\W(\T;M)$ with $\Act(\zeta)<c$ and for each interval $[t_0,t_1]\subset\R$ with $0<t_1-t_0\leq\bar\epsilon$, we have
$\dist\cbra{\zeta(t_0), \zeta(t_1)}<\rho$.
\begin{proof}
Up to summing a positive constant to the convex quadratic-growth Lagrangian $\Lagr$,  we can always assume  that  there exists   $\underline\ell>0$ such that 
\[\Lagr(t,q,v) \geq \underline\ell\,\abs v_q^2,\s\s \forall (t,q,v)\in\T\times\Tan M.\]
Then, let us consider an arbitrary  $\zeta\in\W(\T;M)$ such that $\Act(\zeta)<c$. For each interval $[t_0,t_1]\subset\R$ with $0<t_1-t_0\leq 1$ we have  
\begin{align*}
\dist\cbra{\zeta(t_0), \zeta(t_1)}^2
& \leq 
\cbra{
		\int_{t_0}^{t_1}  |{\dot\zeta(t)}|_{\zeta(t)}
		\diff t  
}^2
\leq
(t_1-t_0) 
\int_{t_0}^{t_1}   
|\dot\zeta(t)|_{\zeta(t)}^2 \,\diff t \\
& \leq  
(t_1-t_0) 
\int_{t_0}^{t_1}  
\underline\ell^{-1}  
\Lagr(t,\zeta(t),\dot\zeta(t)) \,\diff t
\leq
  (t_1-t_0) \underline\ell^{-1} \Act(\zeta)  \\   
& < 
  (t_1-t_0) \underline\ell^{-1}c.    
\end{align*}
Hence,  for $\bar\epsilon=\bar\epsilon(\Lagr,\rho,c):= \rho^2\, \underline\ell\, c^{-1}$, we obtain the claim.
\end{proof}
\end{prop}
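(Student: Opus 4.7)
The plan is to reduce the statement to a Cauchy--Schwarz estimate on the $L^2$ norm of $\dot\zeta$, exploiting the quadratic lower bound that a convex quadratic-growth Lagrangian enjoys fiberwise. More precisely, condition \textbf{(Q1)} together with the compactness of $M$ implies that, after adding a suitable positive constant to $\Lagr$ (which only shifts $\Act$ by the same constant and hence only shifts $c$), one has a uniform bound of the form $\Lagr(t,q,v)\geq\underline\ell\,|v|_q^2$ for some $\underline\ell>0$ and all $(t,q,v)\in\T\times\Tan M$. I would state and use this reduction first.

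Once that bound is in hand, the argument is a two-line estimate. Given $\zeta\in\W(\T;M)$ with $\Act(\zeta)<c$ and an interval $[t_0,t_1]\subset\R$ with $0<t_1-t_0\leq 1$, one estimates
\[
\dist(\zeta(t_0),\zeta(t_1))^2\leq\Bigl(\int_{t_0}^{t_1}|\dot\zeta(t)|_{\zeta(t)}\,\diff t\Bigr)^2\leq(t_1-t_0)\int_{t_0}^{t_1}|\dot\zeta(t)|_{\zeta(t)}^2\,\diff t,
\]
where the first inequality is the length-versus-distance bound for absolutely continuous curves and the second is Cauchy--Schwarz. Using the quadratic lower bound on $\Lagr$, the right-hand integral is at most $\underline\ell^{-1}\int_{t_0}^{t_1}\Lagr(t,\zeta,\dot\zeta)\,\diff t\leq\underline\ell^{-1}\Act(\zeta)<\underline\ell^{-1}c$, where I used $t_1-t_0\leq 1$ to bound the integral over $[t_0,t_1]$ by the full action over a period. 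Combining, one gets $\dist(\zeta(t_0),\zeta(t_1))^2<(t_1-t_0)\,\underline\ell^{-1}\,c$, which is smaller than $\rho^2$ as soon as $t_1-t_0\leq\rho^2\underline\ell\,c^{-1}$; thus $\bar\epsilon:=\min\{1,\rho^2\underline\ell\,c^{-1}\}$ works.

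There is no real obstacle here: the only subtle point is that the integral of $\Lagr$ over $[t_0,t_1]$ must be compared with the action of $\zeta$ over the full period $[0,1]$, which is harmless precisely because the positive-constant shift ensures $\Lagr\geq 0$, so that the integrand is nonnegative and the restricted integral is dominated by $\Act(\zeta)$. This is also why the restriction $t_1-t_0\leq 1$ (absorbed into $\bar\epsilon$) is natural. For intervals of length greater than $1$ the statement is vacuous once $\bar\epsilon$ is chosen $\leq 1$, so no additional case analysis is needed.
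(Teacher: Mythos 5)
Your proposal is correct and follows essentially the same approach as the paper's own proof: shift $\Lagr$ by a constant to get $\Lagr\geq\underline\ell|v|_q^2$, then chain the length-distance bound, Cauchy--Schwarz, and the quadratic lower bound on $\Lagr$, using $t_1-t_0\leq 1$ and nonnegativity of the shifted $\Lagr$ to dominate the restricted integral by the action over a full period. Your explicit observation about why the restricted integral is dominated by $\Act(\zeta)$, and your choice $\bar\epsilon=\min\{1,\rho^2\underline\ell c^{-1}\}$, are slightly more careful than what the paper writes but amount to the same argument.
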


From now on, we  will briefly denote the \textbf{open sublevels} of the action $\Act$ and of the discrete action $\Act_k$ by
\[ (\Act)_c:=\Act^{-1}(-\infty,c),\s (\Act_k)_c:=\Act_k^{-1}(-\infty,c),\s\s\forall c\in\R. \]
Let $\rho_0=\rho_0(\Lagr)$, $\epsilon_0=\epsilon_0(\Lagr)$ and $\bar\epsilon=\bar\epsilon(\Lagr,\rho_0,c)$ be the positive constants given by propositions~\ref{p:fathi} and~\ref{p:estimate_sub}, and consider the integer
\begin{align*}
\bar k = \bar k(\Lagr,c) := \left\lceil\max\gbra{\frac1{\epsilon_0},\frac1{\bar\epsilon}}\right\rceil \in\N.
\end{align*}
We fix an integer $k\geq\bar k$ and we define a retraction $r:(\Act)_c\to(\Act_k)_c$ in the following way: for each $\zeta\in(\Act)_c$, the image $r(\zeta)$ is the unique $k$-broken Euler-Lagrange loop such that $r(\zeta)(\sfrac ik)=\zeta(\sfrac ik)$   for each $i\in\gbra{0,...,k-1}$, see figure~\ref{f:shortening}(a). Then, we define a homotopy $R:[0,1]\times(\Act)_c\to(\Act)_c$ as follows: for each  $i\in\gbra{0,...,k-1}$, $s\in [\frac i k,\frac{i+1}k]$ and $\zeta\in(\Act)_c$,  the loop  $R(s,\zeta)$ is defined as $R(s,\zeta)|_{[0,i/k]}=r(\zeta)|_{[0,i/k]}$, $R(s,\zeta)|_{[s,1]}=\zeta|_{[s,1]}$ and $R(s,\zeta)|_{[i/k,s]}$ 
is the unique action minimizer (with respect to the Lagrangian $\Lagr$) with endpoints $\zeta(\frac ik)$ and $\zeta(s)$, see figure~\ref{f:shortening}(b). By proposition~\ref{p:estimate_sub}, the homotopy $R$ and the map $r$ are well defined and we have $\Act(R(s,\zeta))\leq \Act(\zeta)$ for every $(s,\zeta)\in[0,1]\times(\Act)_c$. Moreover $R$ is a strong  deformation retraction. In fact, for  each $\zeta\in(\Act)_c$ we have $R(0,\zeta)=\zeta$, $R(1,\zeta)=r(\zeta)$ and, if $\zeta$ already belongs to $(\Act_k)_c$, we further have $R(s,\zeta)=\zeta$ for every $s\in[0,1]$.

\frag{0}{$\zeta(0)$}
\frag{1/5}{$\zeta(\sfrac15)$}
\frag{2/5}{$\zeta(\sfrac25)$}
\frag{3/5}{$\zeta(\sfrac35)$}
\frag{4/5}{$\zeta(\sfrac45)$}
\frag{1}{$\zeta(1)$}
\frag[n]{g}{$\zeta$}
\frag[n]{r(g)}{$r(\zeta)$}
\frag[n]{s(g)}{$R(s,\zeta)$}
\frag{s}{$\zeta(s)$}
\frag{(a)}{\textbf{(a)}}
\frag{(b)}{\textbf{(b)}}

\begin{figure} 
\begin{center}
\includegraphics[scale=0.95]{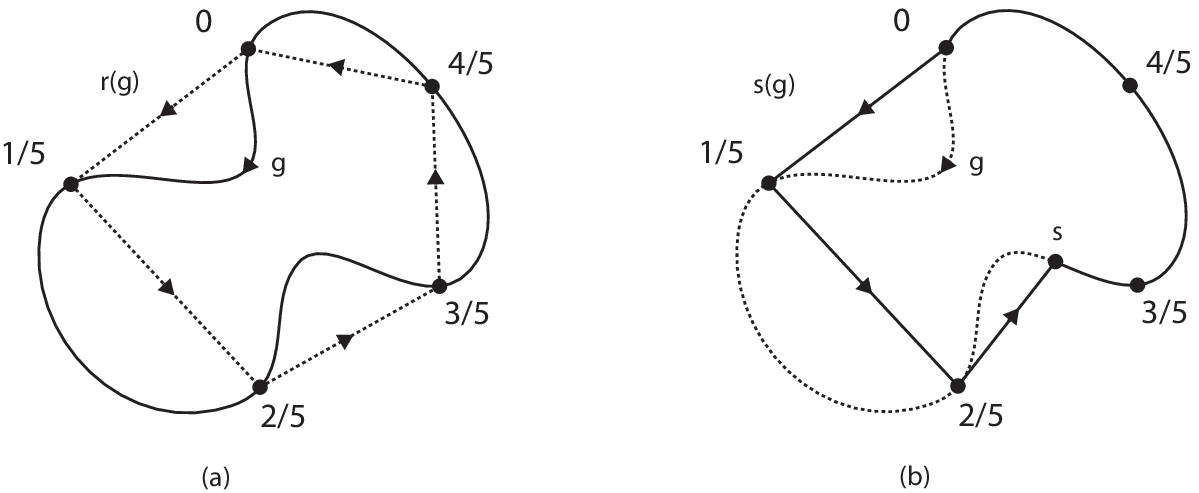}
\hcaption{\textbf{(a)} Example of a loop $\zeta$ (solid line) and the corresponding $r(\zeta)$ (dashed line),  for the case of the geodesics action functional on the flat $\R^2$, i.e. $\Lagr(t,q,v)= v_1^2+v_2^2$, and $k=5$.   \textbf{(b)} Homotoped loop $R(s,\zeta)$ (solid line).
}
\label{f:shortening}
\end{center}
\end{figure}

If $c_1<c_2\leq c$, the same homotopy $R$ can be used to show that the  pair $((\Act)_{c_2},(\Act)_{c_1})$ deformation retracts strongly  onto $((\Act_k)_{c_2},(\Act_k)_{c_1})$. Furthermore, if $\gamma\in\W(\T;M)$ is a critical point of $\Act$ with $\Act(\gamma)=c$, up to increasing $k$ we have that $\gamma$ belongs to $\Lambda_k$, and we can extend $R$ to a strong deformation retraction of the  pair $((\Act)_{c}\cup\gbra\gamma,(\Act)_{c})$ onto $((\Act_k)_{c}\cup\gbra\gamma,(\Act_k)_{c})$ such that $R(s,\gamma)=\gamma$ for every $s\in[0,1]$. Summing up, we have obtained the following.
   
\begin{lem}\label{l:HomotEquiv}
$ $
\begin{itemize}
\item[(i)] For each $c_1<c_2<\infty$  there exists $\bar k=\bar k(\Lagr,c_2)\in\N$ and, for every integer $k\geq\bar k$, a strong deformation retraction of the pair $\cbra{ (\Act)_ {c_2}   ,   (\Act)_ {c_1} }$ onto  $\cbra{ (\Act_k)_ {c_2},  (\Act_k)_ {c_1} }$.
\item[(ii)] For each critical point   $\gamma$ of $\Act$ with $\Act(\gamma)=c$,  there exists $\bar k=\bar k(\Lagr,c)\in\N$ and, for every integer $k\geq\bar k$, a strong deformation retraction of the pair $\cbra{ (\Act)_ c \cup \gbra\gamma ,   (\Act)_ c }$ onto  $\cbra{ (\Act_k)_ c \cup \gbra\gamma ,      (\Act_k)_ c }$.
\hfill$\qed$
\end{itemize}
\end{lem}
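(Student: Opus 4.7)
The plan is to verify that the retraction $r$ and homotopy $R$ constructed in the paragraphs preceding the statement satisfy all the requirements of a strong deformation retraction of pairs, for a suitable choice of $\bar k$. Almost all of the content of the lemma has been prepared by the construction; what remains is to choose $\bar k$ precisely and to check continuity.

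For part (i), I would set $\bar k=\bar k(\Lagr,c_2)\in\N$ large enough so that $1/\bar k\leq\min\{\epsilon_0,\bar\epsilon(\Lagr,\rho_0,c_2)\}$, where $\epsilon_0,\rho_0$ come from Proposition~\ref{p:fathi} and $\bar\epsilon$ from Proposition~\ref{p:estimate_sub}. This guarantees that for every $k\geq\bar k$ and every $\zeta\in(\Act)_{c_2}$, consecutive sample points $\zeta(i/k)$ and $\zeta((i+1)/k)$ lie within $\rho_0$ in $M$, and hence by Proposition~\ref{p:fathi} admit a unique short action minimizer joining them; the intermediate minimizers from $\zeta(i/k)$ to $\zeta(s)$, for $s\in[i/k,(i+1)/k]$, are also uniquely defined. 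Thus $r(\zeta)\in\Lambda_k$ and $R(s,\zeta)$ are well defined. The inequality $\Act(R(s,\zeta))\leq\Act(\zeta)$ is immediate, since on $[0,s]$ the loop $R(s,\zeta)$ is a concatenation of unique action minimizers with the same endpoints as the corresponding pieces of $\zeta$, while on $[s,1]$ it coincides with $\zeta$. This shows that $R$ maps $(\Act)_{c_2}$ into itself and preserves the sub-pair $(\Act)_{c_1}$, yielding the pair statement. The endpoint conditions $R(0,\zeta)=\zeta$ and $R(1,\zeta)=r(\zeta)$ are built in; and if $\zeta\in\Lambda_k$ already, then by the uniqueness part of Proposition~\ref{p:fathi} the minimizing pieces coincide with $\zeta$ itself, forcing $R(s,\zeta)=\zeta$ for every $s\in[0,1]$.

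Continuity of $R\colon [0,1]\times(\Act)_{c_2}\to(\Act)_{c_2}$ is the only delicate verification. It follows from Proposition~\ref{p:fathi2}: the assignment $(q_0,q_1)\mapsto\gamma_{q_0,q_1}$ is smooth into $C^\infty([t_0,t_1];M)$, hence continuous into $\W([t_0,t_1];M)$. Composing with the continuous evaluation maps $\zeta\mapsto\zeta(i/k)$ and $\zeta\mapsto\zeta(s)$ (continuous from $\W(\T;M)$ to $M$), one obtains joint continuity of each piece of $R(s,\zeta)$. The mildly subtle point is continuity across the breakpoints $s=i/k$: as $s\downarrow i/k$ the minimizer on $[i/k,s]$ collapses in the $\W$-topology to the constant loop at $\zeta(i/k)$, matching $R(i/k,\zeta)$; as $s\uparrow(i+1)/k$ it converges in $\W$ to the minimizer on the full subinterval $[i/k,(i+1)/k]$, matching the formula used for $s\geq(i+1)/k$.

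For part (ii), I would further enlarge $\bar k$ so that each short arc $\gamma|_{[i/k,(i+1)/k]}$ is itself an action minimizer with endpoints within $\rho_0$; this is possible because the smooth Euler-Lagrange solution $\gamma$ is, in particular, an action minimizer on sufficiently short intervals (a classical Weierstrass-type statement recalled before Proposition~\ref{p:fathi}). Then $\gamma\in\Lambda_k$, and by the uniqueness part of Proposition~\ref{p:fathi} every piece of $R(s,\gamma)$ coincides with the corresponding piece of $\gamma$, so $R(s,\gamma)=\gamma$ for every $s\in[0,1]$. Extending $R$ by this fixed value on $\{\gamma\}$ produces the required strong deformation retraction of the pair $\{(\Act)_c\cup\{\gamma\},(\Act)_c\}$ onto $\{(\Act_k)_c\cup\{\gamma\},(\Act_k)_c\}$. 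The main obstacle throughout is the joint continuity at the nodes $s=i/k$, which is resolved by the smooth dependence of short minimizers on their endpoints given by Proposition~\ref{p:fathi2}; everything else is bookkeeping on top of the already established action decrease and uniqueness properties.
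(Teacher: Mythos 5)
Your proof follows the paper's approach exactly: you use the retraction $r$ and the homotopy $R$ constructed in the paragraphs preceding the lemma, with the same choice of $\bar k$ determined by the constants from Propositions~\ref{p:fathi} and~\ref{p:estimate_sub}, and the same uniqueness argument for why $R$ fixes $\Lambda_k$ pointwise and why $\gamma\in\Lambda_k$ for $k$ large. The only addition is your explicit verification of joint continuity of $R$ across the breakpoints $s=i/k$ via Proposition~\ref{p:fathi2}, a step the paper leaves implicit; this is a welcome clarification but does not change the argument.
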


Let   $\gamma$ be a critical point of $\Act$ with critical value $c=\Act(\gamma)$. For every integer $k\geq\bar k(\Lagr,c)$ we have that $\gamma$ belongs to the $k$-broken Euler-Lagrange loop space $\Lambda_k$ and therefore it is a critical point of the discrete action $\Act_k$ as well.  We recall that the \textbf{local homology groups} of $\Act$ at $\gamma$  are  defined as
\begin{align*}
   \MC_*(\Act,\gamma)=\Hom_*\cbra{(\Act)_ c \cup \gbra\gamma ,   (\Act)_ c}, 
\end{align*}
where $\Hom_*$ denotes the singular homology functor with an arbitrary coefficient group (the  local homology groups  of the discrete action functional $\Act_k$ at $\gamma$ are defined analogously). The above lemma~\ref{l:HomotEquiv}(ii) has the following immediate consequence.
\begin{cor}\label{c:hom_equiv_loc_hom}
For each integer $k>\bar k(\Lagr,c)$ the inclusion 
\begin{align}\label{e:incl_pair}
\iota: \cbra{ (\Act_k)_ c \cup \gbra\gamma  ,      (\Act_k)_ c  }
\hookrightarrow
\cbra{ (\Act)_ c \cup \gbra\gamma ,   (\Act)_ c }
\end{align}
induces the   homology isomorphism
$\displaystyle \iota_*:  \MC_*(\Act_k,\gamma) \toup^\simeq  \MC_*(\Act,\gamma)$.
\hfill$\qed$
\end{cor}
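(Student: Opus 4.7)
The plan is to deduce the corollary as an immediate consequence of Lemma~\ref{l:HomotEquiv}(ii) together with the homotopy invariance of relative singular homology. The entire content is already packaged in the lemma: for $k > \bar k(\Lagr, c)$, we have a strong deformation retraction $R:[0,1]\times((\Act)_c\cup\{\gamma\})\to(\Act)_c\cup\{\gamma\}$ onto $(\Act_k)_c\cup\{\gamma\}$. The first step is simply to record that $R$ is a \emph{deformation retraction of pairs}; this amounts to checking that, during the homotopy, loops in $(\Act)_c$ stay in $(\Act)_c$. But this is precisely the content of the construction of $R$ preceding Lemma~\ref{l:HomotEquiv}: the homotopy is built by replacing arcs of $\zeta$ by short action-minimizers, and each such replacement only decreases the action. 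Thus $R(s,\cdot)$ maps $(\Act)_c$ into itself for every $s\in[0,1]$, and in particular restricts to a strong deformation retraction of $(\Act)_c$ onto $(\Act_k)_c$.

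Next, setting $r:=R(1,\cdot)$, one reads off from the construction that $r\circ\iota=\mathrm{id}$ on $(\Act_k)_c\cup\{\gamma\}$ (loops that already lie in $\Lambda_k$ are fixed by $r$) and that $\iota\circ r$ is homotopic, as a map of pairs, to the identity on $((\Act)_c\cup\{\gamma\},(\Act)_c)$ via $R$ itself. Hence $\iota$ is a homotopy equivalence of pairs with homotopy inverse $r$. Applying the homotopy invariance axiom for relative singular homology yields
\[
\iota_*:\MC_*(\Act_k,\gamma)=\Hom_*\bigl((\Act_k)_c\cup\{\gamma\},(\Act_k)_c\bigr)\toup^\simeq\Hom_*\bigl((\Act)_c\cup\{\gamma\},(\Act)_c\bigr)=\MC_*(\Act,\gamma),
\]
with inverse $r_*$, which is precisely the claim.

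There is really no obstacle here: the corollary is a packaging of Lemma~\ref{l:HomotEquiv}(ii) in homological language. The only point requiring a moment of attention is the verification that $R$ respects the subspace $(\Act)_c$, which however is immediate from the action-decreasing nature of the shortening construction.
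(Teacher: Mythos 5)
Your proof is correct and follows the same route the paper intends: Corollary~\ref{c:hom_equiv_loc_hom} is indeed an immediate consequence of Lemma~\ref{l:HomotEquiv}(ii), and the paper states it without written proof precisely because it follows from the strong deformation retraction of pairs together with the homotopy invariance of relative singular homology. You have merely made explicit what the lemma already packages (that $R$ preserves $(\Act)_c$ and that $r$ is a homotopy inverse to $\iota$), which is the standard unwinding of the definition of a strong deformation retraction of pairs.
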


It is well known that the local homology groups of a $C^2$ functional at a critical point are trivial in dimension that is smaller than the Morse index or bigger than the sum of the Morse index and the nullity (see \cite[corollary~5.1]{b:Ch}). By  lemma~\ref{l:HomotEquiv}(ii), we  recover this result for the $C^1$ action functional $\Act:\W(\T;M)\to\R$, at least for contractible critical points.

\begin{cor}\label{c:loc_A_interval}
Let $\gamma:\T\to M$ be a contractible loop that is a  critical point of the  action functional $\Act$ with Morse index $\iota(\gamma)$ and nullity $\nu(\gamma)$. Then, the  local homology groups $\MC_*(\Act,\gamma)$ are trivial if $*$ is less than $\iota(\gamma)$  or greater than $\iota(\gamma)+\nu(\gamma)$. 
\begin{proof}
For each sufficiently big $k\in\N$, $\gamma$ is also a  critical point of the discrete action $\Act_k$.  By proposition~\ref{p:SameMorseNullPair_total}, up to  increasing $k$ we have that $\Act$ and $\Act_k$ have the same Morse index and nullity pair $(\iota(\gamma),\nu(\gamma))$ at $\gamma$. By the above  corollary~\ref{c:hom_equiv_loc_hom}, up to further increasing $k$ we have $\MC_*(\Act_k,\gamma)\simeq\MC_*(\Act,\gamma)$. Since $\Act_k$ is smooth, the  local homology groups $\MC_*(\Act_k,\gamma)$ are trivial if $*$ is less than $\iota(\gamma)$  or greater than $\iota(\gamma)+\nu(\gamma)$, and the claim follows.
\end{proof}
\end{cor}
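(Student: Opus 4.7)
The plan is to reduce the claim to the corresponding vanishing statement for the smooth discrete action functional $\Act_k$ via the two transfer results already established, namely proposition~\ref{p:SameMorseNullPair_total} and corollary~\ref{c:hom_equiv_loc_hom}. The classical Morse-theoretic fact that I intend to invoke is the following: for a $C^2$ functional on a Hilbert (or finite-dimensional) manifold and an isolated critical point $\gamma$, the local homology groups $\MC_*$ at $\gamma$ vanish whenever $*$ lies strictly below the Morse index or strictly above the sum of the Morse index and the nullity (this is \cite[corollary~5.1]{b:Ch}). The difficulty in the original problem is that $\Act$ itself is only $C^{1,1}$ in the $W^{1,2}$ topology, so this Morse lemma is not directly applicable; the whole point of the discretization developed in section~\ref{s:discretizations} is precisely to sidestep this issue.

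Concretely, I would argue as follows. Let $c=\Act(\gamma)$. First choose $k_1\in\N$ large enough so that $\gamma\in\Lambda_k$ for all $k\geq k_1$, so in particular $\gamma$ is a critical point of the discrete action $\Act_k$. By proposition~\ref{p:SameMorseNullPair_total}, upon increasing $k_1$ if necessary, the Morse index and nullity pair of $\Act_k$ at $\gamma$ coincides with $(\iota(\gamma),\nu(\gamma))$. Next, by corollary~\ref{c:hom_equiv_loc_hom}, upon further increasing $k_1$, the inclusion of pairs induces an isomorphism $\MC_*(\Act_k,\gamma)\cong\MC_*(\Act,\gamma)$. Fix any such $k\geq k_1$.

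Now $\Lambda_k$ is a finite-dimensional smooth submanifold of $\W(\T;M)$ (as remarked in section~\ref{s:discr_act_funct}, it is diffeomorphic to an open subset of the $k$-fold product $M\times\cdots\times M$), and the restriction $\Act_k=\Act|_{\Lambda_k}$ is smooth, in particular $C^2$. Applying the classical vanishing result \cite[corollary~5.1]{b:Ch} to $\Act_k$ at its critical point $\gamma$ yields
\[
\MC_*(\Act_k,\gamma)=0 \quad\text{for } *<\iota(\gamma) \text{ or } *>\iota(\gamma)+\nu(\gamma).
\]
Transferring this vanishing through the isomorphism $\MC_*(\Act_k,\gamma)\cong\MC_*(\Act,\gamma)$ of corollary~\ref{c:hom_equiv_loc_hom} gives the desired conclusion for $\MC_*(\Act,\gamma)$.

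There is no real obstacle at this stage, since all the hard work has been done upstream: the existence of the deformation retraction onto $\Lambda_k$ (lemma~\ref{l:HomotEquiv}), the index-nullity invariance (proposition~\ref{p:SameMorseNullPair_total}), and the smoothness of $\Act_k$. The only point requiring a little care is to pick a single $k$ large enough that both proposition~\ref{p:SameMorseNullPair_total} and corollary~\ref{c:hom_equiv_loc_hom} apply simultaneously, which is achieved by taking the maximum of the two thresholds.
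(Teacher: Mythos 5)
Your proposal is correct and follows essentially the same route as the paper: pass to the discrete action $\Act_k$ for $k$ large, match index and nullity via proposition~\ref{p:SameMorseNullPair_total}, match local homology via corollary~\ref{c:hom_equiv_loc_hom}, and invoke the classical $C^2$ vanishing result on the finite-dimensional submanifold $\Lambda_k$. The extra remark about taking a single $k$ past both thresholds is exactly the ``up to further increasing $k$'' in the paper's argument.
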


\section{Local Homology and Embeddings of Hilbert Spaces}\label{s:LocHomEmb}

In this section we will prove an abstract  Morse-theoretic result that might be of independent interest, and then we will discuss its application to the action functional of a convex quadratic-growth Lagrangian. The result will be an essential ingredient in the proof of the Lagrangian Conley conjecture.

Let us consider an open set  $\catU$ of a  Hilbert space  $\M$ and a $C^2$ functional   $\catF:\catU\to\R$ that satisfies the Palais-Smale condition. Let  $\m$ be a Hilbert subspace of $\M$ such that $\catu:=\catU\cap\m\neq\varnothing$ and  $\grad \catF(\bm y)\in\m$ for all $\bm y\in\catu$. This latter condition is equivalently expressed via the isometric inclusion  $J:\m\hookrightarrow\M$ as
\begin{align}
\label{e:commGradF}
(\grad \catF)\circ J=J\circ\grad(\catF\circ J).
\end{align}
Let  $\bm x\in \catU$ be an isolated critical point of $\catF$ that sits in the subspace $\m$, and let us further assume that the Morse index $\iota(\catF,\bm x)$  and the  nullity $\nu(\catF,\bm x)$ of $\catF$ at $\bm x$ are finite. We denote by $H=H(\bm x)$ the bounded self-adjoint  linear operator on $\M$ associated to the  Hessian of $\catF$ at $\bm x$, i.e. 
\begin{align}\label{e:HessF_H}
\Hess \catF(\bm x)[\bm v,\bm w]=  \abra{ H\bm v,\bm w}_\M,\s\s\forall \bm v,\bm w\in \M.
\end{align}
We require that $H$ is a Fredholm operator, so that the functional $\catF$ satisfies the hypotheses of the generalized Morse lemma (see \cite[page~44]{b:Ch}).

Throughout this section, for simplicity, all the homology groups are assumed to have coefficients in a field $\F$ (in this way we will avoid the torsion terms that appear in the  K\"unneth formula). We recall that the \textbf{local homology groups} of the functional $\catF$ at $\bm x$  are  defined as $\MC_*(\catF,\bm x)=\Hom_*\cbra{(\catF)_c \cup \gbra{\bm x}, (\catF)_c}$,  where $c=\catF(\bm x)$ and $(\catF)_c:=\catF^{-1}(-\infty,c)$. If we denote by  $\catf:\catu\to\R$ the restricted functional $\catF|_{\catu}$, then $\bm x$ is a critical point of $\catf$ as well and the local homology groups  $\MC_*(\catf,\bm x)$ are defined analogously as $\Hom_*((\catf)_c \cup \gbra{\bm x}, (\catf)_c)$. The  inclusion $J$ restricts to a continuous map of pairs 
\[
J:
((\catf)_c \cup \gbra{\bm x}, (\catf)_c)
\hookrightarrow 
((\catF)_c \cup \gbra{\bm x}, (\catF)_c).\]  
In this way, it induces the  homology homomorphism
\begin{align*} 
J_*:\MC_*(\catf,\bm x)\to \MC_*(\catF,\bm x). 
\end{align*}
The main result of this section is the following.

\begin{thm}\label{t:Hom*J}
If the Morse index and nullity pair of $\catF$ and $\catf$ at $\bm x$ coincide, i.e. 
\begin{align*}
(\iota(\catF,\bm x),\nu(\catF,\bm x))=(\iota(\catf,\bm x),\nu(\catf,\bm x)),
\end{align*}
then $J_*$ is an isomorphism of local homology groups.
\end{thm}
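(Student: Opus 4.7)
The starting point is that the compatibility~\eqref{e:commGradF}, differentiated at $\bm x$, forces $H\circ J=J\circ H|_{\m}$, so the self-adjoint Fredholm operator $H$ preserves both $\m$ and (by self-adjointness) $\m^\perp$. Therefore the spectral decomposition $\M=\M^-\oplus\M^0\oplus\M^+$ into the negative, zero, and positive parts of $H$ (with $\M^-$ and $\M^0$ finite-dimensional by the Fredholm and finite-index hypotheses) restricts to a compatible decomposition $\m=\m^-\oplus\m^0\oplus\m^+$ with $\m^\sharp\subseteq\M^\sharp$ for $\sharp\in\gbra{-,0,+}$. The hypothesis on the Morse index and nullity reads $\dim\m^-=\dim\M^-=\iota(\catF,\bm x)$ and $\dim\m^0=\dim\M^0=\nu(\catF,\bm x)$, so $\m^-=\M^-$ and $\m^0=\M^0$; only the positive eigenspace can be strictly smaller on $\m$.

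Next I would apply the Gromoll--Meyer generalized Morse lemma (see \cite{b:Ch}) to $\catF$ near $\bm x$. This yields a local $C^1$ change of coordinates identifying $\catF$ with the split form
\[
\catF(\bm x+\bm v)=-\|\bm v^-\|_\M^2+\|\bm v^+\|_\M^2+\catF_0(\bm v^0),
\]
where $\catF_0(\bm v^0):=\catF(\bm x+\bm v^0+\phi(\bm v^0))$ and $\phi:\M^0\to(\M^0)^\perp$ is the implicit solution of $P\grad\catF(\bm x+\bm v^0+\phi(\bm v^0))=\bm 0$, with $P$ the orthogonal projection onto $(\M^0)^\perp$. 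The crucial compatibility check, which I expect to be the delicate point, is that this construction can be performed compatibly with $\m$. Since $\grad\catF$ is tangent to $\m$ by~\eqref{e:commGradF} and $P$ preserves $\m$ (indeed $P(\m)=\m^-\oplus\m^+\subseteq\m$, because $\M^-,\M^0,\M^+$ are mutually orthogonal in $\M$ and $\m^0=\M^0$), the equation defining $\phi$ admits a solution inside $\m$ when $\bm v^0\in\m^0=\M^0$, and uniqueness forces $\phi(\bm v^0)\in\m$. Consequently $\catF_0$ coincides on $\M^0=\m^0$ with the analogous kernel function $\catf_0$ produced by applying the splitting lemma to $\catf$, and the subsequent isotopy that straightens $\catF$ on the non-kernel directions can be generated by an $\m$-tangent vector field, so the whole Morse chart restricts to a Morse chart for $\catf$.

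Once this compatibility is in place, the conclusion is a routine consequence of the shifting theorem. In the Morse chart, the pair $((\catF)_c\cup\gbra{\bm x},(\catF)_c)$ deformation retracts onto the product of a disk $(D^\iota,\partial D^\iota)\subset(\M^-,\M^-\setminus\gbra{\bm 0})$ with the local pair for $\catF_0$ on $\M^0$, by contracting the $\M^+$ factor to the origin and pulling the $\M^-$ factor back to a disk along the negative gradient of $-\|\bm v^-\|^2$. Using field coefficients, the K\"unneth formula then gives the shifting isomorphism
\[
\MC_q(\catF,\bm x)\,\cong\,H_q(D^\iota,\partial D^\iota)\otimes\MC_{q-\iota}(\catF_0,\bm x)\,\cong\,\MC_{q-\iota}(\catF_0,\bm x),
\]
where $\iota:=\iota(\catF,\bm x)$. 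The same argument applied inside $\m$, using $\m^-=\M^-$ and $\catf_0=\catF_0$, yields $\MC_q(\catf,\bm x)\cong\MC_{q-\iota}(\catf_0,\bm x)=\MC_{q-\iota}(\catF_0,\bm x)$ with the same shift. Read in the common Morse chart, $J$ is the identity on the $\M^-\oplus\M^0$ factor and the inclusion $\m^+\hookrightarrow\M^+$ on the third factor; after the deformation retractions above both positive factors collapse to a point, so $J$ becomes a homotopy equivalence of pairs and $J_*$ is the claimed isomorphism. The main obstacle, as noted, is the $\m$-compatibility of the Gromoll--Meyer construction, whose only available input is the tangency of $\grad\catF$ to $\m$; this must be threaded through both the uniqueness part of the implicit function theorem and the construction of the interpolating vector field in the classical Morse lemma, at each step using the fact that the spectral projections of $H$ commute with $J$ so that every object produced stays inside $\m$.
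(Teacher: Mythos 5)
Your proposal follows essentially the same approach as the paper: you exploit the compatibility $H\circ J=J\circ H|_{\m}$ to restrict the spectral decomposition, use the index hypothesis to force $\m^-=\M^-$ and $\m^0=\M^0$, verify that the Gromoll--Meyer splitting construction (the implicit function defining the kernel map, and the straightening flow) stays inside $\m$, obtain $\catF^0=\catf^0$, and conclude via the product/K\"unneth structure on local homology. The paper makes the final product step rigorous by building Gromoll--Meyer pairs as cartesian products and tracking naturality of the K\"unneth isomorphism under $J$, whereas you invoke the shifting theorem directly and argue that the positive factors collapse; these are the same argument at different levels of explicitness, and your identification of the $\m$-compatibility of the Morse chart as the crux matches the paper's Proposition~\ref{p:commut_hh_phi} exactly.
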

The proof of this theorem will be carried out in  subsection~\ref{s:IV4}, after several preliminaries. The reader might want to skip the remaining of section~\ref{s:LocHomEmb} (beside subsection~\ref{s:ApplMorseLagr}) on a first reading.

\begin{rem}
One might ask if theorem~\ref{t:Hom*J} still holds without the assumption~\eqref{e:commGradF}. This is   true in case $\bm x$ is a non-degenerate critical point: briefly, a relative cycle that represents a  generator of  $\MC_{\iota(\catF,\bm x)}(\catf,\bm x)$  also represents a  generator of  $\MC_{\iota(\catF,\bm x)}(\catF,\bm x)$, and all the other local homology groups $\MC_*(\catf,\bm x)$ and $\MC_*(\catF,\bm x)$, with $*\neq \iota(\catF,\bm x)=\iota(\catf,\bm x)$, are trivial. However, in the general case, assumption~\eqref{e:commGradF} is necessary, as it is   shown by the following simple example. Consider the functional  $\catF:\R^2\to\R$ given by 
\[\catF(x,y)=(y-x^2)(y-2x^2),\s\s\forall (x,y)\in\R^2.\] 
The origin $\bm 0$ is clearly an isolated critical point of $\catF$, and the corresponding Hessian is given in matrix form by 
\begin{align}\label{e:HessExAbbo}
\Hess \catF(0,0)=
  \qbra{\begin{array}{cc}
    0 & 0 \\ 
    0 & 2 \\ 
  \end{array}}.
\end{align} 
Now, let us consider the inclusion $J:\R\hookrightarrow\R^2$ given by $J(x)=(x,0)$, namely the inclusion of the $x$-axis  in $\R^2$.  The Morse index  of $\catF$ at the origin is~$0$ and   coincides with the Morse index of the restricted functional $\catF\circ J$. Analogously, the nullity  of $\catF$ and $\catF\circ J$ at the origin are both equal to~$1$. However the gradient of $\catF$ on the $x$-axis is given by
\[\grad \catF(x,0)=(8x^3,-3x^2),\s\s\forall x\in\R,\]
hence condition~\eqref{e:commGradF} is not satisfied, i.e.\ $(\grad \catF)\circ J\neq J\circ\grad (\catF\circ J)$. The local homology groups of $\catF$ and $\catF\circ J$ at the origin  are not isomorphic (and consequently $J_*$ is not an isomorphism). In fact, by examining the sublevel  $(\catF)_0$ (see figure~\ref{f:saddle}), it is clear that the origin is a saddle for $\catF$ and a  minimum for $\catF\circ J$. Therefore we have
\[\MC_*(\catF,\bm 0)=
  \left\{\begin{array}{ccc}
    \F && *=1, \\ 
    0 && *\neq 1, \\ 
  \end{array}
  \right.
  \s\s\s
 \MC_*(\catF\circ J,0)=
  \left\{\begin{array}{ccc}
    \F && *=0, \\ 
    0 && *\neq 0. \\ 
  \end{array}
  \right.  
\]  
\end{rem}

\frag{xx}{$x$}
\frag{yy}{$y$}
\frag{00}{$\bm0$}
\frag{(F)0}{ }

\begin{figure} 
\begin{center}
\includegraphics[scale=0.7]{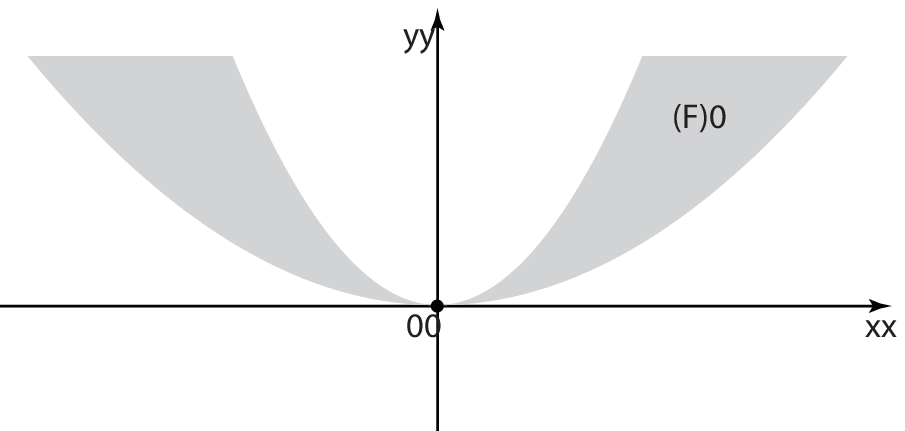}
\hcaption{Behavior of $\catF(x,y)=(y-x^2)(y-2x^2)$ around the critical point $\bm0$. The shaded region corresponds to the sublevel $(\catF)_0=\catF^{-1}(-\infty,0)$.
}
\label{f:saddle}
\end{center}
\end{figure}

\begin{rem}
Also the hypothesis of $C^2$ regularity of the involved functional is essential in order to obtain the assertion of theorem~\ref{t:Hom*J}. In fact, let us modify the functional $\catF$ of the previous remark in the following way
\[\catF(x,y)=(y-x^2)(y-2x^2)+3x^6\arctan\cbra{\frac y{x^4}},\s\s\forall (x,y)\in\R^2.\]
This functional is $C^1$ and twice Gateaux differentiable, but it is not $C^2$ at the origin, which is again a critical point of $\catF$. The Hessian of $\catF$ at the origin is still given by \eqref{e:HessExAbbo}, but the gradient of $\catF$ on the $x$-axis is now given by
\[ \grad \catF(x,0)=(8x^3,0),\s\s\forall x\in\R, \]
hence condition~\eqref{e:commGradF} is satisfied, i.e.\ $(\grad \catF)\circ J=J\circ\grad (\catF\circ J)$, where $J:\R\hookrightarrow\R^2$ is given by $J(x)=(x,0)$. The Morse index and nullity pair of $\catF$ at the origin is $(0,1)$ and coincides with  the Morse index and nullity pair of $\catF\circ J$ at $0$. Moreover, $0$ is a local minimum of $\catF\circ J$, which implies 
\[
 \MC_*(\catF\circ J,0)=
  \left\{\begin{array}{ccc}
    \F && *=0, \\ 
    0 && *\neq 0, \\ 
  \end{array}
  \right.  
\] 
However, the origin $\bm 0\in\R^2$ is not a local minimum of the functional $\catF$. In fact, a straightforward computation shows that $\bm0$ is a local maximum of the functional $\catF$ restricted to the parabola $y=\frac 32 x^2$, namely $0\in\R$ is a local maximum of the functional
\[ 
x\mapsto \catF\cbra{x,\sfrac 32 x^2}
=
-\frac14 
x^4 + 
3x^6 \arctan\cbra{\frac{3}{2x^2}}.
\]
This readily implies that $\MC_0(\catF,\bm0)=0\neq\MC_0(\catF\circ J,0)$, which contradicts the assertion of theorem~\ref{t:Hom*J}.
\end{rem}

\subsection{The generalized Morse lemma revisited}\label{s:IV2}

In order to prove theorem~\ref{t:Hom*J}, we need to give a more precise statement of the generalized Morse lemma. Everything that we will claim already follows from the classical proof (see  \cite[page 44]{b:Ch}). In order to simplify  the notation, from now on we will assume, without loss of generality, that $\bm x=\bm 0\in\M$ and hence $\catU\subset \M$ is an open neighborhood of $\bm 0$. According to the  operator $H$ associated to the Hessian of $\catF$ at the critical point $\bm 0$,  we have an orthogonal  splitting 
$\M= \M^+  \oplus \M^-  \oplus \M^0$, 
where $\M^+$ [resp.\ $\M^-$] is a closed subspace in which $H$ is positive definite [resp.\ negative definite], while  $\M^0$ is the finite-dimensional  kernel of $H$. We denote by $P^\pm:\M\to\M^\pm$  the linear projector onto $\M^\pm:=\M^+\oplus\M^-$. On  $\M^\pm\setminus\gbra{\bm0}$ we introduce the local flow $\Theta_H$ defined by $\Theta_H(s,\sigma(0))=\sigma(s)$, where $\sigma:(s_0,s_1)\to\M^\pm\setminus\gbra{\bm0}$ (with $s_0<0<s_1$) is a curve that satisfies  
\begin{align}\label{e:flow_line_sigma} 
\dot\sigma(s)=- \frac{H \sigma (s)}{\norm{H \sigma (s)}_\M},\s\s\forall s\in(s_0,s_1).
\end{align}
We also set $\Theta_H(0,\bm0):=\bm 0$. Then, the generalized Morse lemma may be restated as follows.

\begin{lem}[Generalized Morse Lemma revisited]\index{Morse!lemma!revisited}\index{lemma!generalized Morse --!revisited}\label{l:MorseRev}
With the above assumptions on $\catF$, there exists an open neighborhood $\catV\subseteq\catU$ of $\bm0$, a  homeomorphism onto its image
\begin{align*}
\phi: (\catV,\bm0) \to (\catU,\bm0)
\end{align*}
and a $C^1$ map
\begin{align*}
\psi: (\catV \cap \M^0,\bm0) \rightarrow (\M^\pm,\bm0),
\end{align*}
such that the following assertions hold.
\begin{itemize} 
\item[$(\mathrm{i})$] 
For each $\vv\in \catV$, if we write $\vv=\vv^0+\vv^\pm$ according to the splitting $\M=\M^0\oplus\M^\pm$, we have 
\[ \catF\circ\phi(\vv)= \underbrace{\catF\cbra{\vv^0 + \psi(\vv^0)  }}_{=:\catF^0(\vv^0)} + \underbrace{\sfrac 1 2 \abra{H \vv^\pm,\vv^\pm}_\M}_{=:\catF^\pm(\vv^\pm)}.\]

\item[$(\mathrm{ii})$] 
The origin $\bm0$ is a critical point of both $\catF^0$ and  $\catF^\pm$.

\item[$(\mathrm{iii})$]
The map $\psi$ is implicitly defined by
\begin{align*}
& P^\pm\cbra{ \grad \catF( \vv^0 + \psi(\vv^0)) }=\bm0,\s\s \forall\vv^0\in \catV \cap \M^0,    \\
& \psi(\bm0)=\bm0.
\end{align*}

\item[$(\mathrm{iv})$] The  homeomorphism $\phi$ is given by
\begin{align*}
&\phi^{-1}(\vv)= \vv^0 + \Theta_H(\tau(\vv-\psi(\vv^0)),\vv^\pm -\psi(\vv^0)) ,\s\s \forall\vv=\vv^0+\vv^\pm\in\phi(\catV),
\end{align*}
where  $\tau$ is a continuous function defined in the following way: for each $\vv=\vv^0+\vv^\pm$ that belongs to its domain, $\tau(\vv)$ is the only real number  satisfying
\begin{align*}
&|\tau(\vv)|< \|\vv^\pm\|_\M,\\
&\catF\cbra{\vv+\psi(\vv^0)} 
-  
\catF\cbra{\vv^0+\psi(\vv^0)}
   				 =
   				 \catF^\pm(\Theta_H(\tau(\vv),\vv^\pm)).
\end{align*}
\hfill$\qed$
\end{itemize}
\end{lem}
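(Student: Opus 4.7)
The plan is to follow the classical proof of the generalized Morse lemma given in \cite[p.~44]{b:Ch}, carefully tracking the construction of $\psi$ and $\phi$ so as to verify the explicit formulas in (iii) and (iv).

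First I would construct $\psi$ via the implicit function theorem applied to the $C^1$ map
\[
F(\vv^0, \vv^\pm) := P^\pm \grad \catF(\vv^0 + \vv^\pm),
\]
defined on a neighborhood of $\bm 0$ in $\M^0 \oplus \M^\pm$. Since $\bm 0$ is critical we have $F(\bm 0,\bm 0) = \bm 0$, while $\partial_{\vv^\pm} F(\bm 0,\bm 0) = P^\pm H|_{\M^\pm}$ is an isomorphism: the full kernel of $H$ lies in $\M^0$, so on $\M^\pm$ one has a uniform spectral gap $\|H\vv^\pm\|_\M \geq c\|\vv^\pm\|_\M$. The implicit function theorem furnishes a unique $C^1$ map $\psi : \catV \cap \M^0 \to \M^\pm$ with $\psi(\bm 0) = \bm 0$ and $F(\vv^0, \psi(\vv^0)) \equiv \bm 0$, which is exactly (iii); differentiating at $\bm 0$ gives $d\psi(\bm 0) = \bm 0$. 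Setting $\catF^0(\vv^0) := \catF(\vv^0 + \psi(\vv^0))$ and $\catF^\pm(\vv^\pm) := \tfrac{1}{2}\abra{H\vv^\pm, \vv^\pm}_\M$, statement (ii) follows from the chain rule together with $d\catF(\bm 0) = 0$.

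The core step is constructing $\phi$ so that (i) holds. For each small $\vv^0$ I would introduce the auxiliary function $g_{\vv^0}(w) := \catF(\vv^0 + w + \psi(\vv^0)) - \catF^0(\vv^0)$ on $\M^\pm$; by (iii) we have $g_{\vv^0}(\bm 0) = 0$ and $dg_{\vv^0}(\bm 0) = 0$, and the $C^2$ Taylor expansion gives $g_{\vv^0}(w) = \catF^\pm(w) + o(\|w\|_\M^2)$ uniformly in $\vv^0$. Along an orbit $\sigma(s) := \Theta_H(s, w)$ one computes $\tfrac{d}{ds}\catF^\pm(\sigma(s)) = \abra{H\sigma(s), \dot\sigma(s)}_\M = -\|H\sigma(s)\|_\M$, so $s \mapsto \catF^\pm(\sigma(s))$ is strictly monotone at a rate bounded below by $c\|\sigma(s)\|_\M$. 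An intermediate value argument then produces, for every small $\vv = \vv^0 + \vv^\pm$ with $w := \vv^\pm - \psi(\vv^0) \neq \bm 0$, a unique $\tau(\vv - \psi(\vv^0)) \in (-\|w\|_\M, \|w\|_\M)$ solving $\catF^\pm(\Theta_H(\tau, w)) = g_{\vv^0}(w)$, extended by $0$ when $w = \bm 0$. This is exactly the defining equation for $\tau$ in (iv) after substitution; the formula in (iv) then defines $\phi^{-1}$ explicitly, $\phi$ is its inverse, and assertion (i) is just a rewriting of the equation defining $\tau$.

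The main technical obstacle I anticipate is verifying the continuity of $\tau$ across the graph of $\psi$ (where the flow base point $w$ vanishes and $\Theta_H$ is only defined by the convention $\Theta_H(0,\bm 0) = \bm 0$), together with the strict bound $|\tau| < \|w\|_\M$ required in (iv). Both rest on the estimate
\[
\bigl|\tau(\vv - \psi(\vv^0))\bigr| \leq C\, \|w\|_\M^{-1}\, \bigl|g_{\vv^0}(w) - \catF^\pm(w)\bigr| = o(\|w\|_\M),
\]
obtained by combining the Taylor remainder $g_{\vv^0}(w) - \catF^\pm(w) = o(\|w\|_\M^2)$ with the lower bound $c\|w\|_\M$ on the speed of $s \mapsto \catF^\pm(\Theta_H(s, w))$; shrinking $\catV$ enough forces $|\tau| \to 0$ as $w \to \bm 0$. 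Once continuity is in hand, $\phi^{-1}$ is a continuous bijection onto a neighborhood of $\bm 0$ that preserves the $\M^0$-coordinate and, on each $\vv^0$-slice, decomposes as the translation $\vv^\pm \mapsto \vv^\pm - \psi(\vv^0)$ followed by the continuous injection $w \mapsto \Theta_H(\tau_{\vv^0}(w), w)$; continuity of $\phi = (\phi^{-1})^{-1}$ then follows from uniqueness of $\tau$ and continuous dependence of the ODE \eqref{e:flow_line_sigma} on initial data, completing the proof.
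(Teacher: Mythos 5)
The paper offers no proof of this lemma, stating only that it follows from the classical argument in \cite[page~44]{b:Ch}; your sketch reconstructs precisely that argument (implicit function theorem for $\psi$, the unit-speed flow $\Theta_H$ together with strict monotonicity of $s\mapsto\catF^\pm(\Theta_H(s,w))$ and an intermediate-value argument for the time $\tau$), so it is essentially the same approach and is sound. The one imprecision worth flagging is that $g_{\vv^0}(w)-\catF^\pm(w)$ contains the term $\tfrac12\,(\Hess\catF(\vv^0+\psi(\vv^0))-H)[w,w]$, which is genuinely quadratic in $w$ for each fixed $\vv^0\neq\bm0$, so the remainder is not $o(\|w\|_\M^2)$ in $w$ alone; rather it is $\le\epsilon\,\|w\|_\M^2$ with $\epsilon\to0$ as $(\vv^0,w)\to\bm0$ jointly, which---after shrinking $\catV$---is exactly what your estimate for $|\tau|<\|w\|_\M$ requires.
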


\subsection{Naturality of the Morse lemma}\label{s:IV3}

Let $\hhh$  be the bounded self-adjoint linear    operator on $\m\subset\M$  associated to the Hessian of the restricted functional $\catf=\catF|_{\m}$ at $\bm0$. Then 
\begin{align}\label{e:restr_H_H.} 
H|_{\m}=\hhh.
\end{align}
In fact, by condition~\eqref{e:commGradF}, we have
\begin{align*}
H\circ J=
\diff(\grad \catF)(\bm 0) \circ J= 
\diff(\underbrace{(\grad \catF)\circ J}_{  J\circ\grad(\catf)  })(\bm 0) =
J \circ\diff(\grad (\catf))(\bm 0)=
J\circ \hhh.
\end{align*}
In particular $\hhh$  is a   Fredholm operator on $\m$. If we denote by  $\m=\m^0\oplus\m^+\oplus\m^-$ the orthogonal splitting defined by the operator $\hhh$, equation~\eqref{e:restr_H_H.} readily implies that 
\begin{align}\label{e:incl_spl}
\m^0\subseteq\M^0,\s \m^+\subseteq\M^+,\s \m^-\subseteq\M^-,
\end{align}
and moreover, if we denote by $\p^\pm:\m\to\m^\pm$ the orthogonal projector onto $\m^\pm=\m^+\oplus\m^-$, this latter turns out to be the restriction of the projector $P^\pm:\M\to\M^\pm$ to $\m$, i.e.
\begin{align}\label{e:P_p} 
P^\pm|_{\m}=\p^\pm.
\end{align}

The hypotheses of the generalized Morse Lemma are fulfilled by both the  functional $\catF$ and its restriction $\catf$. The following  is the long list of the symbols involved in the statement of lemma~\ref{l:MorseRev}, and we write in the subsequent line the corresponding list of symbols involved in  the statement referred to the restricted functional $\catf$:
\begin{align*}
  \begin{array}{llllllllll}
    \M^\pm,& \M^0,& P^\pm,& \catV,& \Theta_H,&  \phi,& \psi,& \tau,&
    \catF^0,& \catF^\pm, \\ 
	\m^\pm,& \m^0,& \p^\pm,& \catv,& \Theta_\hhh,&  \phi_\bullet,&
    \psi_\bullet,& \tau_\bullet,& \catf^0,& \catf^\pm.
  \end{array}
\end{align*}
We want to show that, under the hypotheses of theorem~\ref{t:Hom*J}, the decomposition $\catF^\pm+\catF^0$ of $\catF$, given by the generalized Morse lemma, restricts  to the corresponding decomposition  $\catf^\pm+\catf^0$ of $\catf$. 

\begin{lem} \label{l:IsomLin}
$ $
\begin{itemize}
\item[$(\mathrm{i})$] 
If $\iota(\catF,\bm0)=\iota(\catf,\bm0)$,  then $\M^-=\m^-$. 
\item[$(\mathrm{ii})$] 
If $\nu(\catF,\bm0)=\nu(\catf,\bm0)$, then $\M^0=\m^0$. 
\end{itemize}
\begin{proof}
The claims follow at once from \eqref{e:incl_spl}, since 
\begin{align*}
\dim \m^-&=\iota(\catf,\bm0)=\iota(\catF,\bm0)=\dim \M^-,\\
\dim \m^0&=\nu(\catf,\bm0)=\nu(\catF,\bm0)=\dim \M^0.
\qedhere
\end{align*}
\end{proof}
\end{lem}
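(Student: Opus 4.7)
The plan is to leverage the inclusions already established in \eqref{e:incl_spl}, namely $\m^-\subseteq\M^-$ and $\m^0\subseteq\M^0$, and then upgrade these containments to equalities via a dimension count. The crucial point is that the Morse index of $\catF$ at $\bm 0$ is exactly $\dim\M^-$ (by definition of the index as the dimension of the maximal subspace on which the Hessian is negative definite, which is realized here by the $H$-negative summand $\M^-$), and likewise the nullity is $\dim\M^0=\dim\ker H$. The analogous identification holds for $\catf$ with respect to $\hhh$, $\m^-$, $\m^0$.

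For part (i), I would compute $\dim\m^-=\iota(\catf,\bm 0)$ and $\dim\M^-=\iota(\catF,\bm 0)$; the hypothesis $\iota(\catF,\bm 0)=\iota(\catf,\bm 0)$ together with the containment $\m^-\subseteq\M^-$ from \eqref{e:incl_spl} and the finiteness of these dimensions (guaranteed by the standing hypothesis that both $\iota$ and $\nu$ are finite at $\bm 0$, which makes $\M^-$ and $\M^0$ finite-dimensional, and a fortiori so are $\m^-$ and $\m^0$) forces $\m^-=\M^-$. For part (ii), the same scheme applies verbatim with $\m^0,\M^0$ in place of $\m^-,\M^-$ and $\nu$ in place of $\iota$.

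There is essentially no obstacle here: the work was done in establishing \eqref{e:restr_H_H.} (the restriction identity $H|_{\m}=\hhh$), which in turn came from the compatibility hypothesis~\eqref{e:commGradF}, and that identity is what forces the spectral decomposition of $\hhh$ to sit inside the spectral decomposition of $H$. Once \eqref{e:incl_spl} is in hand, the lemma is a one-line consequence of "a subspace of a finite-dimensional space of equal dimension is the whole space". The proof will therefore be a short display chaining the equalities $\dim\m^-=\iota(\catf,\bm 0)=\iota(\catF,\bm 0)=\dim\M^-$ and similarly $\dim\m^0=\nu(\catf,\bm 0)=\nu(\catF,\bm 0)=\dim\M^0$, followed by invoking the inclusions of \eqref{e:incl_spl}.
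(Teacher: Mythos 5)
Your proposal is correct and follows the paper's argument exactly: identify $\dim\m^-=\iota(\catf,\bm 0)$ and $\dim\M^-=\iota(\catF,\bm 0)$ (and analogously for the nullities), use the hypothesis to equate these finite dimensions, and then invoke the inclusions $\m^-\subseteq\M^-$, $\m^0\subseteq\M^0$ from \eqref{e:incl_spl} to upgrade to equalities. This is precisely the one-line dimension count in the paper's proof.
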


\begin{prop}\label{p:commut_hh_phi}
If $\nu(\catF,\bm0)=\nu(\catf,\bm0)$, the following equalities hold (on some neighborhood of the critical point $\bm 0$ where the involved maps are defined):
\begin{itemize}
  \item[$(\mathrm{i})$] $\psi=\psi_\bullet$,
  \item[$(\mathrm{ii})$] $\phi|_{\m}= \phi_\bullet$.
\end{itemize}
\begin{proof}
By lemma~\ref{l:IsomLin}(ii), the domains of the maps $\psi$ and $\psi_\bullet$ are open neighborhoods of $\bm 0$ in $\M^0=\m^0$. Up to shrinking these neighborhoods, we can assume that both $\psi$ and $\psi_\bullet$ have common domain $\catV^0\subset\M^0$. By lemma~\ref{l:MorseRev}(iii) we have $\psi(\bm0)=\psi_\bullet (\bm0)=\bm0$, and all we have to do in order to conclude the proof of (i) is to show that, for each $\bm v^0\in\catV^0\setminus\gbra{\bm0}$, the maps $\psi_\bullet(\bm v^0)$ and $\psi(\bm v^0)$ are implicitly defined by the same equation, that is
\[
P^\pm\cbra{ \grad \catF( \vv^0 + \psi(\vv^0)) }=
\bm0=
P^\pm\cbra{ \grad \catF( \vv^0 + \psi_\bullet(\vv^0)) }.
\] 
This is easily verified since, by \eqref{e:commGradF} and \eqref{e:P_p}, we have
\[
P^\pm( \grad \catF( \vv^0 + \psi_\bullet(\vv^0))  )=
\p^\pm( \grad \catf( \vv^0 + \psi_\bullet(\vv^0)) )=
\bm0.
\]

For  (ii), up to shrinking the domains of $\phi$ and $\phi_\bullet$, we can assume that they are maps of the form $\phi:\catV\to\catU$ and $\phi_\bullet:\catv\to\catu$, where $\catv=\catV\cap\m$. Being $\phi$ and $\phi_\bullet$  homeomorphisms onto their images, we can equivalently prove that $\phi^{-1}=\phi_\bullet^{-1}$ on  the open set $\phi_\bullet(\catv)\subset\catu$. To begin with, notice that~\eqref{e:restr_H_H.} readily implies that the flow $\Theta_{\hhh}$ is the  restriction of  the flow $\Theta_H$ to $\m^\pm\setminus\gbra{\bm0}$, i.e.
\begin{align*}
\Theta_H(\cdot,\vv^\pm)=\Theta_{\hhh}(\cdot,\vv^\pm),\s\s\forall \vv^\pm\in\m^\pm\setminus\gbra{\bm0}.
\end{align*}
By lemma~\ref{l:MorseRev}(iv) and since $\psi=\psi_\bullet$, for each $\vv=\vv^0+\vv^\pm\in\phi_\bullet(\catv)$   we have
\begin{align*}
\phi^{-1}(\vv)&= \vv^0 + \Theta_H(\tau(\vv -\psi(\vv^0)),\vv^\pm -\psi(\vv^0)),\\
\phi_\bullet^{-1}(\vv)&= \vv^0 + \Theta_H(\tau_\bullet(\vv -\psi(\vv^0)),\vv^\pm -\psi(\vv^0)).
\end{align*}
Hence, in order to conclude the proof of (ii) we just need to show that, for each $\vv$ in the domain of $\tau_\bullet$, we have $\tau(\vv)=\tau_\bullet(\vv)$. This is easily verified   
since, by lemma~\ref{l:MorseRev}(iv), $\tau(\vv)$ and $\tau_\bullet(\vv)$ are implicitly defined by the same  equation
\begin{align*}
				\frac 1 2 
				\abra{H\, \Theta_H(\tau(\vv),\vv^\pm), 
				\Theta_H(\tau(\vv),\vv^\pm)}_\M
&= \catF\cbra{\vv+\psi(\vv^0)} - \catF\cbra{\vv^0+\psi(\vv^0)}\\
&=\frac 1 2 
				\abra{H\, \Theta_H(\tau_\bullet(\vv),\vv^\pm), 
				\Theta_H(\tau_\bullet(\vv),\vv^\pm)}_\M.
\qedhere
\end{align*}
\end{proof}
\end{prop}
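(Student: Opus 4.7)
The plan is to exploit the explicit implicit-function characterizations of the Morse-lemma data provided by Lemma~\ref{l:MorseRev}, combined with the compatibility condition \eqref{e:commGradF}, the restriction formulas $H|_\m=\hhh$ and $P^\pm|_\m=\p^\pm$, and the identification $\M^0=\m^0$ supplied by Lemma~\ref{l:IsomLin}(ii) under the nullity hypothesis. After shrinking neighborhoods we may assume that $\psi$ and $\psi_\bullet$ share a common domain $\catV^0\subset\M^0=\m^0$, and that the domains of $\phi_\bullet$ and of $\phi|_\m$ coincide with $\catv=\catV\cap\m$. Every ingredient fed into the Morse lemma -- the functional, its gradient, its Hessian operator, and the negative-Hessian flow $\Theta_H$ -- restricts in a controlled way from $\M$ to $\m$, so the implicit definitions should match verbatim on $\m$.

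For (i), I would verify that $\psi_\bullet$ satisfies the same defining equation as $\psi$ on $\catV^0$ and then invoke the implicit-function uniqueness built into Lemma~\ref{l:MorseRev}(iii). Fix $\vv^0\in\catV^0$. Since $\vv^0+\psi_\bullet(\vv^0)\in\m$, the compatibility \eqref{e:commGradF} yields $\grad\catF(\vv^0+\psi_\bullet(\vv^0))=J(\grad\catf(\vv^0+\psi_\bullet(\vv^0)))$, and applying $P^\pm$ together with $P^\pm|_\m=\p^\pm$ gives
\[
P^\pm\bigl(\grad\catF(\vv^0+\psi_\bullet(\vv^0))\bigr)=\p^\pm\bigl(\grad\catf(\vv^0+\psi_\bullet(\vv^0))\bigr)=\bm0.
\]
As $\psi_\bullet(\vv^0)\in\m^\pm\subseteq\M^\pm$ (by \eqref{e:incl_spl}) and $\psi_\bullet(\bm0)=\bm0=\psi(\bm0)$, the uniqueness of the implicit solution of $P^\pm(\grad\catF(\vv^0+z))=\bm0$ near $z=\bm0$ forces $\psi_\bullet=\psi$ throughout $\catV^0$.

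For (ii), I would use the explicit formula for $\phi^{-1}$ in Lemma~\ref{l:MorseRev}(iv) after checking two auxiliary identities on $\m$. First, $\Theta_H$ restricted to $\m^\pm\setminus\{\bm0\}$ coincides with $\Theta_\hhh$: the ODE $\dot\sigma=-H\sigma/\|H\sigma\|_\M$ preserves $\m^\pm$ because $H|_\m=\hhh$ sends $\m^\pm$ into itself by \eqref{e:incl_spl}, and the $\m$- and $\M$-norms agree on $\m$, so the restriction of $\Theta_H$ satisfies the defining ODE of $\Theta_\hhh$. Second, $\tau|_{\m}=\tau_\bullet$: for $\vv=\vv^0+\vv^\pm\in\catv$, both $\vv+\psi(\vv^0)$ and $\vv^0+\psi(\vv^0)$ lie in $\m$ (using (i)), so $\catF$ evaluated on them equals $\catf$, and $\catF^\pm|_{\m^\pm}=\catf^\pm$ because $H|_\m=\hhh$; hence the two scalar conditions characterizing $\tau(\vv)$ in Lemma~\ref{l:MorseRev}(iv) reduce to those characterizing $\tau_\bullet(\vv)$, and uniqueness forces $\tau(\vv)=\tau_\bullet(\vv)$. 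Substituting $\psi=\psi_\bullet$, $\Theta_H|_{\m^\pm\setminus\{\bm0\}}=\Theta_\hhh$ and $\tau|_\m=\tau_\bullet$ into the explicit formula for $\phi^{-1}$ yields $\phi^{-1}|_{\phi_\bullet(\catv)}=\phi_\bullet^{-1}$, and inverting gives $\phi|_\m=\phi_\bullet$. The main obstacle is purely bookkeeping: one must stay inside $\m$ at every step and avoid using any identification like $\M^\pm=\m^\pm$, which the hypothesis does not provide; only the inclusions $\m^\pm\subseteq\M^\pm$ and the compatibilities $H|_\m=\hhh$, $P^\pm|_\m=\p^\pm$, and $\catF|_{\catu}=\catf$, together with \eqref{e:commGradF}, are invoked.
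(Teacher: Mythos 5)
Your proposal is correct and follows essentially the same route as the paper's own proof: establish $\psi=\psi_\bullet$ by checking both satisfy the defining equation $P^\pm(\grad\catF(\vv^0+\cdot))=\bm0$ via \eqref{e:commGradF} and \eqref{e:P_p}, then establish $\Theta_H|_{\m^\pm\setminus\{\bm0\}}=\Theta_\hhh$ and $\tau|_\m=\tau_\bullet$ to identify $\phi^{-1}$ with $\phi_\bullet^{-1}$ via the explicit formula of Lemma~\ref{l:MorseRev}(iv). The extra care you flag (using only the inclusions $\m^\pm\subseteq\M^\pm$ rather than equality) is exactly the bookkeeping the paper's argument implicitly relies on.
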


\begin{cor}\label{c:comm_Fpm_F0}
If $\nu(\catF,\bm0)=\nu(\catf,\bm0)$ then $\catF^\pm|_{\m}=\catf^\pm$ and $\catF^0=\catf^0$.
\hfill$\qed$
\end{cor}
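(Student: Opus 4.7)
The plan is to derive the corollary directly from the identifications already established in the preceding material, without needing any new Morse-theoretic input. The corollary is essentially a bookkeeping consequence of proposition~\ref{p:commut_hh_phi} together with equation~\eqref{e:restr_H_H.} and the inclusions in \eqref{e:incl_spl}.

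First, I would handle the equality $\catF^0=\catf^0$. By hypothesis and lemma~\ref{l:IsomLin}(ii) we have $\M^0=\m^0$, so (after shrinking if necessary) the domains of $\catF^0$ and $\catf^0$ agree as open neighborhoods of $\bm0$ in this common kernel. Proposition~\ref{p:commut_hh_phi}(i) gives $\psi=\psi_\bullet$, and the values of $\psi_\bullet$ lie in $\m^\pm\subseteq\m$. Hence for every $\vv^0\in\M^0=\m^0$ the point $\vv^0+\psi(\vv^0)=\vv^0+\psi_\bullet(\vv^0)$ lies in $\m$, so
\[
\catF^0(\vv^0)=\catF\bigl(\vv^0+\psi(\vv^0)\bigr)=\catf\bigl(\vv^0+\psi_\bullet(\vv^0)\bigr)=\catf^0(\vv^0),
\]
using only that $\catf=\catF|_{\m}$.

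For the equality $\catF^\pm|_{\m}=\catf^\pm$, I would just use the definition from lemma~\ref{l:MorseRev}(i) together with $H|_{\m}=\hhh$ from \eqref{e:restr_H_H.} and the fact that $J\colon\m\hookrightarrow\M$ is isometric, so that $\abra{\cdot,\cdot}_\M$ restricted to $\m\times\m$ is $\abra{\cdot,\cdot}_\m$. For $\vv^\pm\in\m^\pm$ (which sits inside $\M^\pm$ by \eqref{e:incl_spl}),
\[
\catF^\pm(\vv^\pm)=\tfrac12\abra{H\vv^\pm,\vv^\pm}_\M=\tfrac12\abra{\hhh\vv^\pm,\vv^\pm}_\m=\catf^\pm(\vv^\pm).
\]
Note that the hypothesis $\nu(\catF,\bm0)=\nu(\catf,\bm0)$ is actually only needed for the first identity (via proposition~\ref{p:commut_hh_phi}), while the second identity follows from \eqref{e:commGradF} alone, through \eqref{e:restr_H_H.}.

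I do not anticipate any obstacle: the whole content has already been absorbed into lemma~\ref{l:IsomLin}(ii), proposition~\ref{p:commut_hh_phi}(i), and equation~\eqref{e:restr_H_H.}. The corollary is a one-line consequence in each of its two halves, and the proof is essentially just assembling these ingredients.
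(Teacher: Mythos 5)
Your proof is correct and takes the same (essentially trivial) route the paper intends, which is why the paper marks the corollary with $\qed$ and no argument: both halves follow immediately by assembling lemma~\ref{l:IsomLin}(ii), proposition~\ref{p:commut_hh_phi}(i), and equation~\eqref{e:restr_H_H.}. Your side remark that the nullity hypothesis is used only for $\catF^0=\catf^0$, while $\catF^\pm|_{\m}=\catf^\pm$ already follows from \eqref{e:commGradF} via \eqref{e:restr_H_H.} and \eqref{e:incl_spl}, is accurate.
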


\subsection{Local Homology} \label{s:IV4}
Before going to the proof of theorem~\ref{t:Hom*J}, we need to establish another   naturality property, this time for the isomorphism between the local homology of $\catF$ at $\bm0$ and the homology of the corresponding Gromoll-Meyer pairs. For the reader's convenience, let us briefly recall the needed definition applied to our setting. We denote by $\Phi_\catF$ the anti-gradient flow of $\catF$, i.e.
\[\pder {\Phi_\catF}{t} (t,\bm y)=-\grad \catF(\Phi_\catF(t,\bm y)),\s\s\Phi_\catF(0,\cdot)=\id_{\M}.\]
A pair of topological spaces $(\catW,\catW_-)$ is called a \textbf{Gromoll-Meyer pair} for $\catF$ at the critical point $\bm0$ when
\begin{itemize}
  \item[\textbf{(GM1)}] $\catW\subset\M$ is a closed neighborhood of $\bm0$ that does not contain other critical points of~$\catF$;
  
  \item[\textbf{(GM2)}] if $\catF(0)=c$, there exists $\epsilon>0$ such that  $[c-\epsilon,c)$ does not contain critical values of $\catF$, and $\catW\cap(\catF)_{c-\epsilon}=\varnothing$;

  \item[\textbf{(GM3)}] if $t_1<t_2$ are such that $\Phi_\catF(t_1,\bm y),\Phi_\catF(t_2,\bm y)\in\catW$ for some $\bm y\in\M$, then  $\Phi_\catF(t,\bm y)\in\catW$ for all $t\in[t_1,t_2]$;
  
  \item[\textbf{(GM4)}] $\catW_- =\gbra{\bm y\in\catW\,|\, \Phi_\catF((0,\infty)\times\gbra{\bm y})\subset\M\setminus\catW}$  is a piecewise submanifold of $\M$ trans\-ver\-sal to the flow $\Phi_\catF$.
\end{itemize}
It is always possible to build a Gromoll-Meyer pair  $(\catW,\catW_-)$   for $\catF$ at $\bm0$, and we have  $\Hom_*(\catW,\catW_-)\simeq\MC_*(\catF,\bm0)$, see  \cite[page~48]{b:Ch}.

\begin{lem}\label{l:nat_GM}
Let $(\catW,\catW_-)$ be a Gromoll-Meyer pair for $\catF$ at $\bm0$. Then, the following holds.
\begin{itemize}
  \item[$(\mathrm{i})$] The pair $(\catW_\bullet,\catW_{\bullet-}):=(\catW\cap\m,\catW_-\cap\m)=(J^{-1}(\catW),J^{-1}(\catW_-))$ is a Gromoll-Meyer pair for   $\catf=\catF|_{\m}$ at $\bm0$.
  \item[$(\mathrm{ii})$] Consider the restrictions of $J:\m\hookrightarrow\M$ given by
 \begin{align*}
 & J:
((\catf)_c \cup \gbra{\bm x}, (\catf)_c)
\hookrightarrow 
((\catF)_c \cup \gbra{\bm x}, (\catF)_c),\\
 & J: (\catW_\bullet,\catW_{\bullet-})\hookrightarrow(\catW,\catW_-).
 \end{align*}
These restrictions induce the homology homomorphisms  
 \begin{align*}
 & J_*: \MC_*(\catf,\bm0)\to\MC_*(\catF,\bm0),\\
 & J_*: \Hom_*(\catW_\bullet,\catW_{\bullet-})\to\Hom_*(\catW,\catW_-).
 \end{align*}
Then, there exist homology isomorphisms $\iota_{(\catW_\bullet,\catW_{\bullet-})}$ and $\iota_{(\catW,\catW_{-})}$ such that the following diagram  commutes.
\begin{align*}
\xymatrix{
\MC_*(\catf,\bm0)    
\ar[rr]^{J_*}  
\ar[dd]_{\iota_{(\catW_\bullet,\catW_{\bullet-})}}^{\simeq}    
 & & 
\MC_*(\catF,\bm0)
\ar[dd]^{\iota_{(\catW,\catW_{-})}}_{\simeq}    
\\\\
\Hom_*(\catW_\bullet,\catW_{\bullet-})
\ar[rr]^{J_*}  
 & & 
\Hom_*(\catW,\catW_-) 
}
\end{align*}
\end{itemize}
\begin{proof}
Part (i) just requires the straightforward  verification that the pair $(\catW_{\bullet},\catW_{\bullet-})$ satisfies conditions \textbf{(GM1)},...,\textbf{(GM4)}. Part (ii) requires to examine  the isomorphism between the homology of a Gromoll-Meyer pair and a corresponding   local homology group (we refer the reader to \cite[page~48]{b:Ch} for  more details on what we claim). The point, here, is to show that this isomorphism is given by the composition of homology isomorphisms induced by maps, so that the assertion follows from the functoriality of singular homology.

Notice that, by the assumption~\eqref{e:commGradF}, the anti-gradient flow $\Phi_{\catF}$ of $\catF$ restricts on $\m$ to the anti-gradient flow  $\Phi_{\catf}$ of the restricted functional $\catf$. 
We introduce the sets $\catY$ and $\catY_\bullet$ given by
\begin{align*}
\catY &:= \Phi_{\catF}\cbra{ [0,\infty)\times\catW }, \\
\catY_\bullet &:= \Phi_{\catf}\cbra{ [0,\infty)\times\catW_\bullet }=\Phi_{\catF}\cbra{ [0,\infty)\times\catW_\bullet }=\catY\cap\m,
\end{align*}
and we consider the following diagram. 
\begin{align*}
\small
\xymatrix{
\MC_*(\catf,\bm0)\ar[rrr]^{J_*}
&&&
\MC_*(\catF,\bm0)\\\\
\Hom_*\cbra{\catY_\bullet\cap(\catf)_c\cup\gbra{\bm0},
\catY_\bullet\cap(\catf)_c}
\ar[rrr]
\ar[uu]^{\simeq}
\ar[dd]_{\simeq}&&&
\Hom_*\cbra{\catY\cap(\catF)_c\cup\gbra{\bm0},
\catY\cap(\catF)_c}
\ar[uu]_{\simeq}
\ar[dd]^{\simeq}&&&\\\\
\Hom_*\cbra{\catY_\bullet,
\catY_\bullet\cap(\catf)_c}
\ar[rrr]&&&
\Hom_*\cbra{\catY,
\catY\cap(\catF)_c}\\\\
\Hom_*(\catW_\bullet,\catW_{\bullet-})
\ar[rrr]^{J_*}
\ar[uu]^{\simeq}&&&
\Hom_*(\catW,\catW_-)
\ar[uu]_{\simeq} 
}
\end{align*}
In this diagram, all the arrows are homology homomorphisms induced by inclusions. Moreover, all the vertical arrows are isomorphisms (this fact is proved by anti-gradient flow deformations and excisions), and we define the isomorphisms $\iota_{(\catW_\bullet,\catW_{\bullet-})}$ and $\iota_{(\catW,\catW_{-})}$ as the composition of the whole left vertical line and right vertical line respectively. By  the functoriality of singular homology, this diagram is commutative, and the claim of part (ii) follows.
\end{proof}
\end{lem}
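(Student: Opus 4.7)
The plan is to verify the four Gromoll--Meyer axioms directly for the intersected pair in part (i), and then to construct the commutative diagram of pairs of spaces whose vertical arrows are homology isomorphisms in part (ii).

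For part (i), the key ingredient throughout is the naturality identity~\eqref{e:commGradF}, which yields two consequences I would use repeatedly: every critical point of $\catf$ on $\m$ is a critical point of $\catF$ on $\M$, and the anti-gradient flow $\Phi_{\catf}$ of $\catf$ on $\m$ is the restriction of $\Phi_{\catF}$ to $\m$ (since $J\circ(-\grad\catf)=(-\grad\catF)\circ J$ and $\m$ is a closed subspace, hence invariant under the flow it generates). Granted this, \textbf{(GM1)} follows since $\catW_\bullet=\catW\cap\m$ is closed in $\m$, contains $\bm 0$ in its relative interior, and any critical point of $\catf$ inside $\catW_\bullet$ is a critical point of $\catF$ inside $\catW$, hence equal to $\bm0$. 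Axiom \textbf{(GM2)} is immediate by intersecting the sublevel condition $\catW\cap(\catF)_{c-\epsilon}=\varnothing$ with $\m$, using $(\catf)_{c-\epsilon}=(\catF)_{c-\epsilon}\cap\m$. Axiom \textbf{(GM3)} is inherited: if $\Phi_{\catf}(t_i,\bm y)\in\catW_\bullet$ for $i=1,2$, then $\Phi_{\catF}(t_i,\bm y)=\Phi_{\catf}(t_i,\bm y)\in\catW$, so $\Phi_{\catF}(t,\bm y)\in\catW$ for $t\in[t_1,t_2]$ and simultaneously $\Phi_{\catF}(t,\bm y)\in\m$, giving $\Phi_{\catf}(t,\bm y)\in\catW_\bullet$. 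For \textbf{(GM4)} I would first verify the set--theoretic identity $\catW_{\bullet-}=\catW_-\cap\m$: both inclusions follow from $\m\setminus\catW_\bullet=(\M\setminus\catW)\cap\m$ together with the flow restriction. Transversality of $\catW_{\bullet-}$ to $\Phi_{\catf}$ inside $\m$ then follows from the transversality of $\catW_-$ to $\Phi_{\catF}$ inside $\M$, since both the velocity vector $-\grad\catf(\bm y)$ and the tangent space $\Tan_{\bm y}\catW_{\bullet-}=\Tan_{\bm y}\catW_-\cap\m$ already live inside $\m$, so the non--tangency condition in $\M$ restricts faithfully to a non--tangency condition in $\m$ at every point where $\catW_-$ has a well--defined piecewise--smooth tangent space.

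For part (ii), I would follow the classical Gromoll--Meyer construction of the isomorphism $\MC_*(\catF,\bm0)\simeq\Hom_*(\catW,\catW_-)$, factoring it through the intermediate sets
\[
\catY:=\Phi_{\catF}([0,\infty)\times\catW),\qquad
\catY_\bullet:=\Phi_{\catf}([0,\infty)\times\catW_\bullet).
\]
The flow restriction $\Phi_{\catF}|_{\R\times\m}=\Phi_{\catf}$ yields $\catY_\bullet=\catY\cap\m$ and similarly for the sublevel intersections, which is exactly what makes the inclusion $J$ descend to maps of pairs at each horizontal stage of the diagram already drawn in the statement. Each of the four vertical inclusions, on both sides of the diagram, induces an isomorphism by a standard argument: the top vertical is excision followed by an anti--gradient--flow deformation retraction onto a neighborhood of $\bm 0$ inside $(\catF)_c\cup\{\bm 0\}$; the bottom vertical is a flow retraction of $\catY$ onto $\catW$ sending $\catY\cap(\catF)_c$ onto $\catW_-$, this latter retraction being the essential content of the Gromoll--Meyer construction in \cite[page 48]{b:Ch}. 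I would then define $\iota_{(\catW,\catW_-)}$ and $\iota_{(\catW_\bullet,\catW_{\bullet-})}$ as the compositions of the two vertical chains, which are isomorphisms by composition. Commutativity of each of the three horizontal squares is automatic, since every map in sight is induced by the single inclusion $J$ of a pair into another pair, and singular homology is functorial.

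The main technical point is the verification of \textbf{(GM4)} for the restricted pair: $\catW_-$ is only assumed to be a piecewise submanifold of $\M$, so one must argue that the intersection with $\m$ inherits a piecewise submanifold structure transverse to $\Phi_{\catf}$. Everything else reduces, via~\eqref{e:commGradF}, to the single observation $\Phi_{\catF}|_{\R\times\m}=\Phi_{\catf}$ plus routine functoriality of singular homology.
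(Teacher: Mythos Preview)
Your proposal is correct and follows essentially the same route as the paper: the paper dismisses part~(i) as a ``straightforward verification'' of \textbf{(GM1)}--\textbf{(GM4)} (which you carry out explicitly, correctly isolating \textbf{(GM4)} as the only point requiring care), and for part~(ii) it builds exactly the same intermediate sets $\catY=\Phi_{\catF}([0,\infty)\times\catW)$ and $\catY_\bullet=\catY\cap\m$, factors the Gromoll--Meyer isomorphism through the pairs $(\catY\cap(\catF)_c\cup\{\bm0\},\catY\cap(\catF)_c)$ and $(\catY,\catY\cap(\catF)_c)$, and concludes by functoriality since every arrow is induced by an inclusion. The only cosmetic difference is that the paper writes out the full four-row commutative ladder, whereas you describe it in words.
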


After these preliminaries, let us go back to the proof of theorem~\ref{t:Hom*J}. First of all, if we assume $\nu(\catF,\bm0)=\nu(\catf,\bm0)$,  corollary~\ref{c:comm_Fpm_F0} implies that $\catF^0=\catf^0$. Hence the inclusion $J$ restricts to the identity map on the pair 
\[((\catf^0)_c\cup\gbra{\bm0},(\catf^0)_c)=((\catF^0)_c\cup\gbra{\bm0},(\catF^0)_c),\]
and therefore 
\begin{align}\label{e:Hom*J0}
\MC_*(\catf^0,\bm0)=\MC_*(\catF^0,\bm0).
\end{align}
For the Morse functionals $\catf^\pm$ and $\catF^\pm$ we have the following result.
\begin{lem}\label{l:Hom*Jpm}
If $(\iota(\catF,\bm0),\nu(\catF,\bm0))=(\iota(\catf,\bm0),\nu(\catf,\bm0))$ then the inclusion $J$, restricted as a map
\begin{align}\label{e:J_restr_pm}
J:((\catf^\pm)_c\cup\gbra{\bm0},(\catf^\pm)_c)
\hookrightarrow
((\catF^\pm)_c\cup\gbra{\bm0},(\catF^\pm)_c),
\end{align}
induces the  homology isomorphism $\displaystyle J_*:\MC_*(\catf^\pm,\bm0)\toup^\simeq\MC_*(\catF^\pm,\bm0)$.
\begin{proof}
The fact that $J$ restricts to a map of the form~\eqref{e:J_restr_pm} is guaranteed by corollary~\ref{c:comm_Fpm_F0}. Moreover,  lemma~\ref{l:IsomLin}(i) guarantees that $\m^-=\M^-$. Hence $J$ further restricts to a homeomorphism
\begin{align*}
\tilde J:(\M^-\cap(\catf^\pm)_c\cup\gbra{\bm0},\M^-\cap(\catf^\pm)_c)
\toup^\simeq
(\M^-\cap(\catF^\pm)_c\cup\gbra{\bm0},\M^-\cap(\catF^\pm)_c),
\end{align*}
and we obtain the following commutative diagram of inclusions.
\[
\xymatrix{
((\catf^\pm)_c\cup\gbra{\bm0},(\catf^\pm)_c)
\bigr.\ 
\ar@{^{(}->}[rr]^J &&
\bigl.\ ((\catF^\pm)_c\cup\gbra{\bm0},(\catF^\pm)_c)\\\\
(\M^-\cap(\catf^\pm)_c\cup\gbra{\bm0},\M^-\cap(\catf^\pm)_c)\Bigr.
\ar[rr]^{\tilde J}_\simeq 
\ar@{^{(}->}[uu]^{k_\bullet}_\sim &&
(\M^-\cap(\catF^\pm)_c\cup\gbra{\bm0},\M^-\cap(\catF^\pm)_c)\Bigr.
\ar@{^{(}->}[uu]^{k}_\sim
}
\]
It is well known that $k_\bullet$ and $k$ are homotopy equivalences. Therefore $J_*=k_*\circ \tilde J_*\circ (k_{\bullet*})^{-1}$ is a homology isomorphism.
\end{proof}
\end{lem}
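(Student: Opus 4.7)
The plan is to reduce both local homology groups to a common finite-dimensional model by deformation retracting each of the two pairs onto the negative eigenspace, and then to use the hypothesis on the Morse index to identify the models.

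The functional $\catF^\pm:\M^\pm\to\R$ is the quadratic form $\bm v\mapsto \frac12\abra{H\bm v,\bm v}_{\M}$, with critical value $c=\catF^\pm(\bm0)=0$. Since $\M^\pm=\M^+\oplus\M^-$ is $H$-orthogonal, with $H$ positive definite on $\M^+$ and negative definite on $\M^-$, writing $\bm v=\bm v^++\bm v^-$ one has $\catF^\pm(\bm v)=\catF^\pm(\bm v^+)+\catF^\pm(\bm v^-)$. First I would check that the explicit linear homotopy
\[
h_t(\bm v^++\bm v^-):=(1-t)\bm v^++\bm v^-,\s\s\forall t\in[0,1],
\]
defines a strong deformation retraction of $((\catF^\pm)_c\cup\gbra{\bm0},(\catF^\pm)_c)$ onto $(\M^-,\M^-\setminus\gbra{\bm0})$. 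Indeed, by $H$-orthogonality,
\[
\catF^\pm(h_t(\bm v))=(1-t)^2\catF^\pm(\bm v^+)+\catF^\pm(\bm v^-)\leq \catF^\pm(\bm v),
\]
so $h_t$ preserves the sublevel, $h_1(\bm v)=\bm v^-$ is nonzero whenever $\bm v\in(\catF^\pm)_c$ (because $\catF^\pm(\bm v)<0$ forces $\catF^\pm(\bm v^-)<0$), and $h_t(\bm0)=\bm0$ makes the map of pairs well defined at the critical point. The identical argument applied inside $\m^\pm$ with $\catf^\pm$ in place of $\catF^\pm$ gives a strong deformation retraction of $((\catf^\pm)_c\cup\gbra{\bm0},(\catf^\pm)_c)$ onto $(\m^-,\m^-\setminus\gbra{\bm0})$.

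To conclude, I would glue everything together. Corollary~\ref{c:comm_Fpm_F0} provides the identity $\catF^\pm|_{\m}=\catf^\pm$, ensuring that $J$ really restricts as in~\eqref{e:J_restr_pm}. Lemma~\ref{l:IsomLin}(i) gives $\m^-=\M^-$, and the inclusions~\eqref{e:incl_spl} imply that the $\M^-$-component of any vector of $\m$ agrees with its $\m^-$-component, so the two retractions $\bm v\mapsto\bm v^-$ are intertwined by $J$. Passing to singular homology one obtains the commutative square
\[
\xymatrix{
\MC_*(\catf^\pm,\bm0) \ar[rr]^{J_*} \ar[d]_{\simeq} & & \MC_*(\catF^\pm,\bm0) \ar[d]^{\simeq} \\
\Hom_*(\m^-,\m^-\setminus\gbra{\bm0}) \ar@{=}[rr] & & \Hom_*(\M^-,\M^-\setminus\gbra{\bm0})
}
\]
whose verticals are the isomorphisms induced by the deformation retractions and whose bottom row is the identity; hence $J_*$ is an isomorphism. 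The only delicate point I anticipate is the verification that the two retractions are genuinely compatible under $J$, but this reduces, via the inclusions~\eqref{e:incl_spl} and the equality $\m^-=\M^-$, to the preceding lemmas; the possible infinite dimensionality of $\M^+$ is harmless here because the homotopy $h_t$ is linear and only contracts the positive direction.
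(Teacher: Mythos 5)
Your proposal is correct and takes essentially the same route as the paper: you build the same commutative square of inclusions (using corollary~\ref{c:comm_Fpm_F0} for~\eqref{e:J_restr_pm} and lemma~\ref{l:IsomLin}(i) for $\m^-=\M^-$), and then identify the local homologies with $\Hom_*(\M^-,\M^-\setminus\gbra{\bm0})$. The only difference is that where the paper invokes the "well known" fact that the inclusions $k,k_\bullet$ of the negative subspace into the sublevel pair are homotopy equivalences, you supply the explicit linear deformation retraction $h_t(\bm v^++\bm v^-)=(1-t)\bm v^++\bm v^-$, which is a correct and helpful fleshing out of that step.
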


\begin{proof}[Proof of theorem~\ref{t:Hom*J}.]
The homeomorphisms $\phi$ and $\phi_\bullet$ obtained by the Morse lemma induce local homology isomorphisms $\phi_*$ and $\phi_{\bullet*}$ such that the following diagram commutes.
\[
\xymatrix{
\MC_*(\catf,\bm0)    \ar[rr]^{J_*} &&
\MC_*(\catF,\bm0)  \\\\
\MC_*(\catf^0 + \catf^\pm,\bm0) 
\ar[rr]^{J_*} 
\ar[uu]^{\phi_{\bullet*}}_{\simeq} &&
\MC_*(\catF^0 + \catF^\pm,\bm0)    
\ar[uu]_{\phi_{*}}^{\simeq} 
}
\]
Hence, we only need to prove that the lower horizontal homomorphism $J_*$ is an isomorphism. We consider Gromoll-Meyer pairs $(\catW^\pm,\catW^\pm_-)$ and $(\catW^0,\catW^0_-)$ for $\catF^\pm$ and $\catF^0$ respectively at $\bm 0$, so  that the cross product of these pairs, which is
\[ 
(\catW,\catW_-):=
(\catW^\pm\times \catW^0,(\catW^\pm_-\times\catW^0)\cup(\catW^\pm\times\catW^0_-)), \]
is a Gromoll-Meyer pair for $\catF^0+\catF^\pm$ at $\bm 0$. Then, by lemma~\ref{l:nat_GM}, we obtain Gromoll-Meyer pairs for the functionals $\catf^\pm$, $\catf^0$ and $\catf^0+\catf^\pm$ at $\bm0$ respectively as 
\begin{align*}
(\catW_\bullet^\pm,\catW_{\bullet-}^\pm)&:=(\catW^\pm\cap\m,\catW_-^\pm\cap\m)=(J^{-1}(\catW^\pm),J^{-1}(\catW_-^\pm)),\\
(\catW_\bullet^0,\catW_{\bullet-}^0)&:=(\catW^0\cap\m,\catW_-^0\cap\m)=(J^{-1}(\catW^0),J^{-1}(\catW_-^0)),\\
(\catW_\bullet,\catW_{\bullet-})&:=(\catW\cap\m,\catW_-\cap\m)=(J^{-1}(\catW),J^{-1}(\catW_-))
\end{align*}
and, together with the K\"unneth formula\index{K\"unneth formula}, we obtain the following commutative diagram.
\[
\xymatrix{
\MC_*(\catf^0 + \catf^\pm,\bm0) 
\ar[rrr]^{J_*} 
\ar[dd]_{\iota_{(\catW_\bullet,\catW_{\bullet-})}}^\simeq 
&&&
\MC_*(\catF^0 + \catF^\pm,\bm0)
\ar[dd]^{\iota_{(\catW,\catW_-)}}_\simeq 
\\\\
\Hom_*(\catW_\bullet,\catW_{\bullet-})
\ar[rrr]^{J_*}
\ar[dd]_{\mathrm{K\ddot{u}nneth}}^\simeq
&&&    
\Hom_*(\catW,\catW_{-})
\ar[dd]^{\mathrm{K\ddot{u}nneth}}_\simeq
\\\\
{
\begin{array}{c}
\Hom_*(\catW_\bullet^\pm,\catW_{\bullet-}^\pm) \\
\otimes \\
\Hom_*(\catW_\bullet^0,\catW_{\bullet-}^0)
\end{array}
}
\ar[rrr]^{J_*\,\otimes\,J_*} 
&&&
{
\begin{array}{c}
\Hom_*(\catW^\pm,\catW_{-}^\pm) \\
\otimes \\
\Hom_*(\catW^0,\catW_{-}^0)
\end{array}
}\\\\
{
\begin{array}{c}
\Hom_*(\catf^\pm,\bm0) \\
\otimes \\
\Hom_*(\catf^0,\bm0)
\end{array}
}
\ar[rrr]^{J_*\,\otimes\,J_*}_{\simeq}
\ar[uu]^{\iota_{(\catW_\bullet^\pm,\catW_{\bullet-}^\pm)}\,\otimes\,\iota_{(\catW_\bullet^0,\catW_{\bullet-}^0)}}_{\simeq} 
&&&
{
\begin{array}{c}
\Hom_*(\catF^\pm,\bm0) \\
\otimes \\
\Hom_*(\catF^0,\bm0)
\end{array}
}
\ar[uu]_{\iota_{(\catW^\pm,\catW_{-}^\pm)}\,\otimes\,\iota_{(\catW^0,\catW_{-}^0)}}^{\simeq} 
}
\]
The commutativity of the upper and lower squares follows from lemma~\ref{l:nat_GM}, while the commutativity of the central square follows from the naturality of the K\"unneth formula (see for instance~\cite[page~275]{b:Hat}). By~\eqref{e:Hom*J0} and  lemma~\ref{l:Hom*Jpm}, the lower horizontal homomorphism $J_*\otimes J_*$ is an isomorphism, and so must be all the others horizontal homomorphisms.
\end{proof}

\subsection{Application to the Lagrangian action functional} \label{s:ApplMorseLagr}

We conclude this section showing that the abstract theorem~\ref{t:Hom*J} applies when  $J$ is the iteration map  and $\catF$ is the mean action functional associated to a convex quadratic-growth Lagrangian. To be more precise, the action functional does not fulfill the hypotheses of theorem~\ref{t:Hom*J}, since it is not $C^2$ in general (the $C^2$-regularity is needed by the Generalized Morse Lemma). However, we can still obtain the assertions of theorem~\ref{t:Hom*J} by means of the discretization  technique developed in section~\ref{s:discretizations}.

Let $\Lagr:\T\times\Tan M\to\R$ be a    convex quadratic-growth Lagrangian  with action $\act\tau$ and let $\gamma:\TT\tau\to M$ be a contractible $\tau$-periodic solution of the Euler-Lagrange system of $\Lagr$ (namely, a contractible critical point of $\act\tau$) with $\act\tau(\gamma)=c$. In order to simplify the notation, let us assume that $\tau=1$.

\begin{prop}\label{p:loc_hom_holds_true}
Let $n\in\N$ be such that $(\iota(\gamma),\nu(\gamma))=(\iota(\gamma\iter n),\nu(\gamma\iter n))$. Then the iteration map $\itmap n$, restricted as a map of pairs  of the form
\[ 
\itmap n : 
( (\Act)_c\cup\gbra\gamma,(\Act)_c )
\hookrightarrow
( (\act n)_c\cup\gbra\gamma, (\act n)_c ), 
\]
induces the homology isomorphism   
$\displaystyle\itmap n_*:
\MC_*(\Act,\gamma)
\toup^\simeq
\MC_*(\act{n},\gamma\iter n)$. 
\begin{proof} 
By corollary~\ref{c:hom_equiv_loc_hom}, for all $k\in\N$  sufficiently big,  the inclusion  
\[ \iota: 
( (\Act_k)_c \cup \gbra\gamma , (\Act_k)_c ) 
\hookrightarrow 
( (\Act)_c \cup \gbra\gamma , (\Act)_c ) \]
 induces an isomorphism in homology. Now, let  $\LL n_k$   be the $n$-periodic analogue of the $k$-broken Euler-Lagrange loop space $\Lambda_k$ (see section~\ref{s:discr_act_funct}). Namely, $\LL n_k$ is the subspace of $\W(\TT n;M)$ consisting of those loops $\zeta:\TT n\to M$ such that $\dist(\zeta(\frac ik),\zeta(\frac {i+1}k))<\rho_0$ and $\zeta|_{[i/k,(i+1)/k]}$ is an action  minimizer for each $i\in\gbra{0,...,nk-1}$ (here, $\rho_0$ is the constant given by proposition~\ref{p:fathi}). We denote by $\act n_k$ the restriction of $\act n$ to $\LL n_k$. Notice that the iteration map restricts as a continuous map of pairs of the form
\begin{align}\label{e:restr_iter}
\itmap n : 
( (\Act_k)_c\cup\gbra\gamma,(\Act_k)_c )
\hookrightarrow
( (\act n_k)_c\cup\gbra\gamma, (\act n_k)_c ). 
\end{align}
Moreover, as before, the inclusion  
\[ \iota\iter n: 
( (\act n_k)_c \cup \gbra\gamma , (\act n_k)_c ) 
\hookrightarrow 
( (\act n)_c \cup \gbra\gamma , (\act n)_c ) \]
 induces an isomorphism in homology, and   the following diagram commutes.
\begin{align*}
\xymatrix{
\MC_*(\Act,\gamma)  
\ar[rr]^{\itmap n_*}    
& & 
\MC_*(\act n,\gamma)\\\\
\MC_*(\Act_k,\gamma) 
\ar[rr]^{\itmap n_*} 
\ar[uu]^{\iota_*}_{\simeq}  
& &   
\MC_*(\act n_k,\gamma)   
\ar[uu]_{\iota\iter n_*}^{\simeq} 
}
\end{align*}
By proposition~\ref{p:SameMorseNullPair_total}, up to choosing a sufficiently big discretization pass $k\in\N$, the Morse index and nullity of the critical points of the action functional   do not change under discretization. Therefore, all we have to do in order to conclude the proof of the proposition is to establish the analogous claim for the restricted iteration map~\eqref{e:restr_iter}.

Applying the localization argument of section~\ref{s:discr_act_funct} around $\gamma$, we can assume that our convex quadratic-growth Lagrangian function has the form $\Lagr:\T\times U\times \R^N\to\R$, where $U$ is an open neighborhood of the origin in $\R^N$, and the corresponding action and mean action have the form $\Act:\W(\T;U)\to \R$ and $\act n:\W(\TT n;U)\to\R$. In this way we identify $\gamma$ with the point $\bm0\in\W(\T;U)$. Notice that the claim that we are proving is precisely the assertion of the abstract  theorem~\ref{t:Hom*J}   when $\catF$ is the  discrete mean action $\act n_k$ and $J$ is the iteration map $\itmap n$. All we have to do in order to conclude is to verify that  theorem~\ref{t:Hom*J} applies in our situation.

First of all, we recall that an open neighborhood $\catu\subset \Lambda_k$ of $\gamma\equiv\bm0$  can be identified with an open set of $\R^{Nk}$ by the diffeomorphism
\[ \zeta\mapsto \cbra{\textstyle\zeta(0), \zeta(\frac 1k) , ..., \zeta(\frac{k-1}k) }, \s\s\forall \zeta\in\catu. \]
Analogously, we can identify an open neighborhood $\catU\subset \LL n_k$ of $\gamma\iter n\equiv\bm0$   with an open set of $\R^{Nnk}$ by the diffeomorphism
\[ \sigma\mapsto \cbra{\textstyle\sigma(0), \sigma(\frac 1{nk}) , ..., \sigma(\frac{nk-1}{nk}) }, \s\s\forall \sigma\in\catU. \]
With these identifications, the iteration map $\itmap n$ is the restriction of an injective linear map $\R^{Nk}\hookrightarrow\R^{Nnk}$, that we still denote by $\itmap n$, given by
\[ \itmap n (w) = ( \underbrace{\bigl.  w,..., w \bigr.}_{n\mbox{ }\mathrm{times}}  ),\s\s\forall w\in\R^{Nk}.  \]
This map is an isometry with respect to the standard inner product  $\aabra{\cdot,\cdot}$  on $\R^{Nk}$ and 
the inner product   $\aabra{\cdot,\cdot}\iter n$ on $\R^{Nnk}$ obtained multiplying by $n^{-1}$ the standard one, i.e.
\begin{align*}
&\aabra{ w, z} = \sum_{j=0}^{k-1} \abra{w_j,z_j},
&\forall  w=(w_0,...,w_{k-1}), z=(z_0,...,z_{k-1})\in\R^{Nk},\\
&\aabra{ w', z'}\iter n = \frac1 n \sum_{j=0}^{nk-1} \abra{w_j',z_j'},
&\forall  w'=(w_0',...,w_{nk-1}'), z'=(z_0',...,z_{nk-1}')\in\R^{Nnk},
\end{align*}
where $\langle\cdot,\cdot\rangle$ denotes the standard inner product of $\R^N$.

Now, in order to conclude, the last hypothesis of theorem~\ref{t:Hom*J} that must be verified  is the condition expressed in~\eqref{e:commGradF}, that in our setting becomes 
\begin{align}\label{e:commGradIter}
 \grad \act n_k(\zeta\iter n)=\itmap n\circ \grad\Act_k(\zeta),\s\s\forall\zeta\in\catu,
 \end{align}
where the gradients of the action functionals $\Act_k$ and $\act n_k$ are computed with respect to the above inner products on $\R^{Nk}$ and $\R^{Nnk}$.

For each $\zeta\in\catu$ and $\xi\in\Tan_{\zeta}\Lambda_k$, we have
\begin{align*}
\diff\Act_k(\zeta)\,\xi=& 
\sum_{h=0}^{k-1} \int_{h/k}^{(h+1)/k} 
\bigl({  
	\langle
	\partial_v \Lagr(t,\zeta,\dot\zeta),\dot\xi
	\rangle
	+
	\langle
	\partial_q \Lagr(t,\zeta,\dot\zeta),\xi
	\rangle
}\bigr)
\diff t\\
=&
\sum_{h=0}^{k-1}  
	\abra{
	{\textstyle
	\partial_v \Lagr ({\frac hk,\zeta(\frac hk),\dot\zeta(\frac hk\sm-)})-
	\partial_v \Lagr ({\frac hk,\zeta(\frac hk),\dot\zeta(\frac hk\sm+)})
	,
	\xi(\frac hk)
	}
	}.
\end{align*}
This computation, together with lemma~\ref{l:TLambda}, shows that  $\grad\Act_k(\zeta):\T\to\R^N$ is the  element of  $\Tan_\zeta\Lambda_k$ given by
\[  
    \textstyle
    \grad\Act_k(\zeta)(\frac hk)=
	\partial_v \Lagr ({\frac hk,\zeta(\frac hk),\dot\zeta(\frac hk\sm-)}) -
	\partial_v \Lagr ({\frac hk,\zeta(\frac hk),\dot\zeta(\frac hk\sm+)}),\s
	\forall h\in\gbra{0,...,k-1}. 
\]
Analogously, for each   $\sigma\in\Tan_{\zeta\iter n}\LL n_k$ we have
\begin{align*}
\diff\act n_k(\zeta\iter n)\,\sigma=
\frac 1n
\sum_{l=0}^{n-1} \sum_{h=0}^{k-1}  
	\abra{
	{\textstyle
	\partial_v \Lagr ({\frac hk,\zeta(\frac hk),\dot\zeta(\frac hk\sm-)})-
	\partial_v \Lagr ({\frac hk,\zeta(\frac hk),\dot\zeta(\frac hk\sm+)})
	,
	\sigma(\frac {h}k + l)
	}
	},
\end{align*}
and, from this computation, equation~\eqref{e:commGradIter} readily follows. 
\end{proof}
\end{prop}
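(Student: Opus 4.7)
The plan is to reduce the problem to the finite-dimensional discrete setting, where the abstract Morse-theoretic result theorem~\ref{t:Hom*J} can be applied. First I would choose $k\in\N$ large enough so that $\gamma\in\Lambda_k$, so that $\gamma\iter n$ belongs to the $n$-periodic analogue $\LL n_k$, and so that corollary~\ref{c:hom_equiv_loc_hom} yields local homology isomorphisms $\iota_*:\MC_*(\Act_k,\gamma)\to\MC_*(\Act,\gamma)$ and $\iota\iter n_*:\MC_*(\act{n}_k,\gamma\iter n)\to\MC_*(\act n,\gamma\iter n)$, where $\act n_k$ denotes the restriction of $\act n$ to $\LL n_k$. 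The iteration map $\itmap n$ clearly sends $\Lambda_k$ into $\LL n_k$ (iterating a $k$-broken Euler-Lagrange loop produces an $nk$-broken one), so one gets a commutative square relating $\itmap n_*$ at the smooth and discrete levels via these isomorphisms. Thus the proposition reduces to proving the analogous statement for $\itmap n:\Lambda_k\to\LL n_k$ between the discrete functionals $\Act_k$ and $\act n_k$.

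Next I would pass to local coordinates around $\gamma$ as in section~\ref{s:discr_act_funct}, so that $\Lambda_k$ near $\gamma$ is identified with an open set of $\R^{Nk}$ via evaluation at $t=h/k$ ($h=0,\dots,k-1$), and similarly $\LL n_k$ near $\gamma\iter n$ is identified with an open set of $\R^{Nnk}$. In these coordinates the iteration map $\itmap n$ becomes the linear embedding $w\mapsto(w,w,\dots,w)$ ($n$ repetitions). If one puts the standard inner product on $\R^{Nk}$ and its $n^{-1}$-rescaling on $\R^{Nnk}$ (matching the Riemannian metrics carried over from $\W(\T;M)$ and $\W(\TT n;M)$, where the latter carries the factor $1/n$ in its $W^{1,2}$ inner product), this linear embedding becomes an isometry, which is the starting hypothesis of theorem~\ref{t:Hom*J}.

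Now I would verify the remaining hypotheses of theorem~\ref{t:Hom*J} applied with $\catF=\act n_k$, $J=\itmap n$, $\bm x=\gamma\iter n\equiv\gamma$: the functional $\act n_k$ is smooth on the finite-dimensional manifold $\LL n_k$, the Palais-Smale condition and Fredholm/finite Morse index conditions are automatic in finite dimensions, and by proposition~\ref{p:SameMorseNullPair_total} (applied in periods $1$ and $n$, with $k$ enlarged if necessary) the Morse index and nullity of $\Act_k$ at $\gamma$ equal those of $\Act$ at $\gamma$ and similarly for $\act n_k$ at $\gamma\iter n$; hence by the assumption on $\gamma$ we obtain $(\iota(\Act_k,\gamma),\nu(\Act_k,\gamma))=(\iota(\act n_k,\gamma\iter n),\nu(\act n_k,\gamma\iter n))$, as required.

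The main point remaining, which I expect to be the most computational step, is the gradient commutation condition \eqref{e:commGradF}, i.e.\ $\grad\act n_k(\zeta\iter n)=\itmap n\circ\grad\Act_k(\zeta)$ for all $\zeta\in\Lambda_k$ near $\gamma$. The idea is to compute $\diff\Act_k(\zeta)\xi$ by integration by parts on each sub-interval $[h/k,(h+1)/k]$; since $\zeta$ solves the Euler-Lagrange equation on each such interval, the integral terms vanish and only boundary contributions at the nodes $h/k$ survive, yielding an explicit formula
\[
\grad\Act_k(\zeta)(\sfrac hk)=\partial_v\Lagr(\sfrac hk,\zeta(\sfrac hk),\dot\zeta(\sfrac hk{-}))-\partial_v\Lagr(\sfrac hk,\zeta(\sfrac hk),\dot\zeta(\sfrac hk{+})),
\]
using the characterization of $\Tan_\zeta\Lambda_k$ from lemma~\ref{l:TLambda}. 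The same computation for $\act n_k$ at $\zeta\iter n$ produces the same nodal jumps repeated $n$ times, and the $n^{-1}$ factor in the inner product on $\R^{Nnk}$ exactly compensates the $n^{-1}$ factor appearing in front of the integral defining $\act n$, so that the values of $\grad\act n_k(\zeta\iter n)$ at the nodes equal those of $\grad\Act_k(\zeta)$. This proves \eqref{e:commGradIter} and therefore permits the direct invocation of theorem~\ref{t:Hom*J}, which finishes the argument.
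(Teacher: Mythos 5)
Your proposal is correct and follows essentially the same route as the paper: reduce via corollary~\ref{c:hom_equiv_loc_hom} and proposition~\ref{p:SameMorseNullPair_total} to the finite-dimensional discrete actions $\Act_k$ and $\act n_k$, identify $\itmap n$ in the nodal coordinates on $\R^{Nk}\hookrightarrow\R^{Nnk}$ as an isometry for the $n^{-1}$-rescaled inner product, verify the gradient-commutation condition~\eqref{e:commGradF} via the boundary-term computation of $\grad\Act_k$ at the nodes, and invoke theorem~\ref{t:Hom*J}. You also explicitly note (correctly) that the Fredholm and Palais--Smale hypotheses are automatic in finite dimensions, a point the paper leaves implicit.
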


\section{Homological vanishing under iteration}\label{s:HomVanSec}

In this section we prove that the elements of the homotopy and homology groups of pairs of sublevels of the action functional (associated to a convex quadratic-growth Lagrangian)  are killed by the $n^{\mathrm{th}}$-iteration map, for $n$ that is a sufficiently big power of any given positive integer. This result was proved in the particular case of the geodesics action functional by Bangert and Klingenberg \cite[theorem~2]{b:BK}, and then extended by Long \cite[section~5]{b:Lo} to more general  Lagrangian systems. Long's proof relies on an ad hoc homology theory, which he calls \emph{Finite Energy Homology}, in order to deal with technical issues concerning the regularity of the involved singular simplices. Here, we provide a proof that makes use of the standard singular homology theory.

Consider a convex quadratic-growth Lagrangian $\Lagr:\T\times\Tan M\to\R$  with  associated action  $\act\tau$, $\tau\in\N$. As usual, let us put $\tau=1$ in order to simplify the notation. We denote by $\wc\subset\W(\T;M)$ and, more generally, by $\WC n\subset\W(\TT n;M)$, the connected com\-po\-nent of \textbf{contractible} loops. Notice that the iteration map restricts to a map \[\itmap n:\wc\hookrightarrow\WC n.\]
From now on we will  implicitly consider the action functionals $\Act$ and $\act n$ restricted to $\wc$ and $\WC n$ respectively. In particular, all the action sublevels will be contained in this latter sets.

\begin{thm}[Homological vanishing]\label{t:vanish}
Let $c_1<c_2\leq\infty$, where the  sublevel $(\Act)_{c_1}$ is not empty, and let $[\mu]\in\Hom_*((\Act)_{c_2}, (\Act)_{c_1})$. Then, for any integer $p\geq 2$, there exists  $\bar n=\bar n(\Lagr,[\mu],p)\in\N$ that is a power of $p$ such that $\itmap {\bar n}_*[\mu]=0$ in $\Hom_*( (\act{\bar n})_{c_2}, (\act{\bar n})_{c_1})$.
\end{thm}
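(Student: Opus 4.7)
My plan follows the strategy of Bangert--Klingenberg~\cite{b:BK} for geodesics, extended by Long~\cite{b:Lo} to more general Lagrangian systems: iterating a relative cycle $n$ times creates enough room in the period-$n$ loop space to kill the iterated class via a controlled deformation. I begin by representing $[\mu]$ by a relative singular cycle whose image $K$ is compact in $(\Act)_{c_2}$, with $\partial K$ compact in $(\Act)_{c_1}$; by compactness $\sup_K \Act < c_2$ and $\sup_{\partial K} \Act < c_1$. Since $(\Act)_{c_1}$ is nonempty, I fix a smooth loop $\eta \in \wc$ with $\Act(\eta) < c_1 - \delta$ for some $\delta > 0$, and, for each $\gamma \in K$, a short arc $\alpha_\gamma : [0, 1] \to M$ from $\gamma(0)$ to $\eta(0)$ depending continuously on $\gamma$ (possible by compactness of $K$ and connectedness of $M$).

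For each $n$ that is a power of $p$, I construct a ``target'' map $\gamma \mapsto \hat\gamma_n \in \WC n$ by concatenating along the period $[0, n]$: one copy of $\gamma$ on $[0, 1]$, the arc $\alpha_\gamma$ on $[1, 2]$, $n - 3$ iterates of $\eta$ on $[2, n-1]$, and $\alpha_\gamma^{-1}$ on $[n - 1, n]$. Using the convex quadratic growth of $\Lagr$ together with compactness of $K$,
\begin{align*}
  \act n(\hat\gamma_n) \,=\, \frac{\Act(\gamma) + (n-3)\,\Act(\eta) + O(1)}{n} \,\longrightarrow\, \Act(\eta) \text{ as } n \to \infty,
\end{align*}
uniformly in $\gamma \in K$, so for $n$ sufficiently large (and in particular for arbitrarily large powers of $p$) the continuous map $\gamma \mapsto \hat\gamma_n$ takes values in $(\act n)_{c_1}$.

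The crux of the proof is the construction of a continuous homotopy $H : [0, 1] \times K \to \WC n$ with $H(0, \cdot) = \itmap n|_K$ and $H(1, \gamma) = \hat\gamma_n$, whose image lies in $(\act n)_{c_2}$ and whose restriction to $\partial K$ lies in $(\act n)_{c_1}$. I carry this out in $n - 1$ successive sub-homotopies, the $k$-th of which deforms only the $k$-th surplus period-$1$ piece of the loop, through a continuous family in $\wc$ starting at a copy of $\gamma$ and ending at a copy of $\eta$ (or, for the first and last sub-homotopy, at the transition arcs $\alpha_\gamma, \alpha_\gamma^{-1}$), while leaving the other $n - 1$ pieces untouched. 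By compactness of $K$ and a continuous-selection argument, the interpolating families can be chosen with individual action uniformly bounded by a constant $C' = C'(K, \eta, \Lagr)$.

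The main obstacle is verifying the action bound at intermediate stages. At any such time, exactly one period-$1$ piece is morphing with action at most $C'$, at most two pieces are transition arcs with action at most a uniform constant $C_a = C_a(K, \eta, \Lagr)$, and the remaining $n - O(1)$ pieces are copies of $\gamma$ or $\eta$, each with action at most $A_0 := \max(\sup_K \Act,\, \Act(\eta))$. Hence
\begin{align*}
  \act n(H(s, \gamma)) \,\leq\, \frac{n - 3}{n}\,A_0 + \frac{C' + 2C_a}{n} \,=\, A_0 + \frac{C' + 2C_a - 3A_0}{n},
\end{align*}
which is at most $c_2$ for $n$ large, since $A_0 < c_2$; the same computation with $\sup_{\partial K} \Act$ replacing $\sup_K \Act$ yields the strict bound $< c_1$ on $\partial K$ for $n$ large, so $H$ restricted to $\partial K$ stays in $(\act n)_{c_1}$. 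The chain defined by $H$ therefore exhibits $\itmap n \mu$ as homologous, modulo $(\act n)_{c_1}$, to the cycle $\gamma \mapsto \hat\gamma_n$ in $(\act n)_{c_1}$, yielding $\itmap{\bar n}_*[\mu] = 0$ for $\bar n = p^k$ with $k$ sufficiently large.
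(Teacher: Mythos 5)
Your strategy departs from the paper's in a way that introduces a genuine topological obstruction. You need a continuous selection, for every $\gamma$ in the compact set $K$, of an arc $\alpha_\gamma$ from $\gamma(0)$ to the fixed point $\eta(0)$, and later a continuous family in $\wc$ deforming (a period-$1$ piece of) $\gamma$ to a copy of $\eta$. The first selection is a lift of the evaluation map $\ev|_K\colon K\to M$ through the based path fibration over $M$, whose total space is contractible; such a lift exists if and only if $\ev|_K$ is nullhomotopic. The second is, morally, a nullhomotopy of the inclusion $K\hookrightarrow\wc$. Neither exists in general. For instance, if $K$ is the submanifold $\iota(M)\subset\wc$ of constant loops (take $c_2$ large and $c_1$ slightly above the minimum of $\Act$, and $\mu$ the fundamental cycle of $\iota(M)$ with $\Z_2$ coefficients), then $\ev|_K$ is the identity of $M$, which is not nullhomotopic when $M$ is closed of positive dimension. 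So ``compactness of $K$ and connectedness of $M$'' do not justify your parenthetical claim, and the later ``continuous-selection argument'' fails for the same reason.

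The paper's proof avoids this obstruction by never introducing a single external target loop (except at the finitely many $0$-simplices, where continuity is vacuous). Lemma~\ref{l:vanish} works one singular simplex $\sigma\colon(\Delta^q,\partial\Delta^q)\to((\Act)_{c_2},(\Act)_{c_1})$ at a time: since $\Delta^q$ is contractible, the auxiliary data needed (the horizontal broken geodesics in the construction of $\theta\siter n$) can be chosen continuously over it, and the ``target'' of the Bangert homotopy is built from the restriction of $\sigma$ to line segments ending on $\partial\Delta^q$, which already lies in $(\Act)_{c_1}$. The resulting per-simplex homotopies are then assembled into a chain-level homotopy by induction on simplex degree, maintaining the face-compatibility conditions (i)--(iv) in the proof of theorem~\ref{t:vanish}, and the vanishing $\itmap{\bar n}_*[\mu]=0$ follows from \cite[lemma~1]{b:BK}. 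Your ``$1/n$'' action estimate is, in spirit, the paper's estimate~\eqref{e:est_act_siter}, so the quantitative part of your argument is sound; what is missing is the simplex-by-simplex inductive scaffolding that removes the topological obstruction to the continuous choices you invoke.
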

Since $\itmap n\circ\itmap m=\itmap{nm}$ for each $n,m\in\N$, the assertion of this theorem can be rephrased as follows: for each $n\in\N$ that is a sufficiently big power of the given $p\in\N$, we have $\itmap {n}_*[\mu]=0$ in $\Hom_*( (\act{n})_{c_2}, (\act{n})_{c_1})$.

The proof of  theorem~\ref{t:vanish} is based on a homotopic technique  that is essentially  due to Bangert (see \cite[section 3]{b:Ba} or \cite[theorem 1]{b:BK}). We recall that a homotopy $F:[0,1]\times (X,U)\to (Y,W)$ is said  \textbf{relative} $U$ when $F(t,x)=F(0,x)$ for all $(t,x)\in[0,1]\times U$. For each $q\in\N$, we denote by $\Delta^q$ the standard $q$-simplex in $\R^q$.

\begin{lem}\label{l:vanish}
Let $c_1<c_2\leq\infty$ and $\sigma:(\Delta^q,\partial\Delta^q)\to((\Act)_{c_2}, (\Act)_{c_1})$ be a  singular simplex, i.e.\ $[\sigma]\in\pi_q((\Act)_{c_2}, (\Act)_{c_1})$. Then, there exists $\bar n=\bar n(\Lagr,\sigma)\in\N$  and, for every integer $n\geq\bar n$, a homotopy
\[\Ban\sigma n: [0,1] \times (\Delta^q,\partial\Delta^q)\to(\sub{n}{c_2}, \sub{n}{c_1})
\s\s\mbox{relative $\partial\Delta^q$},\]
which we call \textnormal{\textbf{Bangert homotopy}}, such that $\Ban \sigma  n(0,\cdot)=\sigma\iter n:=\itmap n\circ\sigma$ and $\Ban\sigma n(1,\Delta^q)\subset \sub{n}{ c_1}$. In particular, $\itmap n_*[\sigma]=0$ in $\pi_q(\sub{n}{c_2}, \sub{n}{c_1})$.
\begin{proof}
First of all, let us introduce some notation. For each path  $\alpha:[x_0,x_1]\to M$, we denote by  $\overline{\alpha}:[x_0,x_1]\to M$ the inverse path 
\[\overline\alpha(x)=\alpha(x_0+x_1-x),\s\s\forall x\in[x_0,x_1].\] 
If we consider a second path $\beta:[x_0',x_1']\to M$ with $\alpha(x_1)=\beta(x_0')$, we denote by $\alpha\bullet\beta:[x_0,x_1+x_1'-x_0']\to M$ the concatenation of   $\alpha$ and $\beta$, namely
\[
\alpha\bullet\beta(x)=
\left\{
  \begin{array}{lcl}
    \alpha(x)         & & x\in[x_0,x_1], \\ 
    \beta(x-x_1+x_0')  & & x\in[x_1,x_1+x_1'-x_0']. \\ 
  \end{array}
\right.
\]

Now, consider a continuous  map $\theta:[x_0,x_1]\to\W(\T;M)$, where $[x_0,x_1]\subset\R$. For each  $n\in\N$, we define $\theta\iter n :=\itmap n\circ\theta:[x_0,x_1]\to\W(\TT n;M)$. Now, we want to build another continuous map $\theta\siter n:[x_0,x_1]\to\W(\TT n;M)$ as explained in the following. To begin with, let us denote by $\ev:\W(\T;M)\to M$  the \textbf{evaluation map}, given by
\[ \ev(\zeta)=\zeta(0),\s\s\forall \zeta\in\W(\T;M). \]
This   map is smooth, which implies that the initial point curve $\ev\circ\theta:[x_0,x_1]\to M$ is  (uniformly) continuous. In particular, there exists a constant $\rho=\rho(\theta)>0$ such that, for each $x,x'\in[x_0,x_1]$ with $|x-x'|\leq\rho$, we have that $\dist(\ev\circ\theta(x),\ev\circ\theta(x'))$ is less than the injectivity radius of $M$. Here,   we have denoted by \virg{$\dist$} the Riemannian distance on $M$  (with respect to its fixed Riemannian metric). Now, for each $x,x'\in[x_0,x_1]$ with $0\leq x'-x\leq\rho$, we define the \textbf{horizontal geodesic}\index{geodesic!horizontal --} $\theta_x^{x'}:[x,x']\to M$ as the shortest geodesic that connects the points $\ev\circ\theta(x)$ and $\ev\circ\theta(x')$. Notice that, by proposition~\ref{p:fathi2}, this geodesic depends  smoothly on its endpoints. Then, let $J\in\N$ be such that
$x_0+J\rho\leq x_1\leq x_0+(J+1)\rho$. For each $x\in[x_0,x_1]$ we further choose $j\in\N$ such that $x_0+  j\rho\leq x\leq x_0+(j+1)\rho$, and we define the \textbf{horizontal broken geodesics}  $\theta_{x_0}^{x}:[x_0,x]\to M$ and $\theta_{x}^{x_1}:[x,x_1]\to M$ by
\begin{align*}
&\theta_{x_0}^x:= 
\theta_{x_0}^{x_0+\rho}
\bullet
\theta_{x_0+\rho}^{x_0+2\rho}
\bullet
...
\bullet
\theta_{x_0+j\rho}^{x},\\
&\theta_{x}^{x_1}:= 
\theta_{x}^{x_0+(j+1)\rho}
\bullet
\theta_{x_0+(j+1)\rho}^{x_0+(j+2)\rho}
\bullet
...
\bullet
\theta_{x_0+J\rho}^{x_1}.
\end{align*}
We define a preliminary map $\tilde\theta\siter n:[x_0,x_1]\to\W(\TT n;M)$ in the following way. For each   $j\in\gbra{1,..., n-2}$ and $y\in[0,\sfrac{x_1-x_0}{ n}]$ we put
\begin{align*}
    \tilde\theta\siter  n (x_0+y)  :=\,&
    	\theta\iter{ n -1}(x_0)  \bullet  
    	\theta_{x_0}^{x_0+ n y}    		\bullet 
    	\theta(x_0+ n y)  
    	\bullet
		\overline{ \theta_{x_0}^{x_0+ n y} },\\
    \tilde\theta\siter  n  \cbra{ x_0 + \sfrac j  n  		(x_1-x_0) + y } :=\,&
    	\theta\iter{ n -j-1}(x_0)  \bullet  
    	 \theta_{x_0}^{x_0+ n y}   \bullet 
    	\theta(x_0+ n y)\\   
			&\bullet \theta_{x_0+ n y}^{x_1}  
			\bullet 
	 	\theta\iter j (x_1)  \bullet  
    	\overline{ \theta_{x_0}^{x_1}}, \\
    \tilde\theta\siter  n  \cbra{ x_0 + 
    	\sfrac{ n-1} n (x_1 - x_0)  + y }  :=\,& 
    	\theta(x_0 + n y)  \bullet  
        \theta_{x_0+ n y}^{x_1}  	\bullet 
	 	\theta\iter { n -1}(x_1)  
		\bullet  
    	\overline{ \theta_{x_0+ n y}^{x_1} }  .
\end{align*}
For each $x\in[x_0,x_1]$, we reparametrize the loop $\tilde\theta\siter  n(x)$ as follows: in the above formulas, each fixed part $\theta(x_0)$ and $\theta(x_1)$ spends the original time $1$, while the moving parts $\theta(x_0+ n y)$ and the pieces of horizontal broken geodesics share the remaining time $1$ proportionally to their original parametrizations. We define  \[\theta\siter  n:[x_0,x_1]\to \W(\TT n;M)\] as the obtained continuous  path in the loop space (see the example in figure~\ref{f:bang}(a)). 

\frag[n]{a4(0)}{$\theta\siter 4(0)$}%
\frag[n]{a4(1/4+s)}{$\theta\siter 4(\sfrac14+y)$}%
\frag[n]{a4(1)}{$\theta\siter 4(1)$}%
\frag[n]{a4(s)}{$\theta\siter 4(y)$}%
\frag[n]{0<s<1/4}{$0\leq y\leq 1/4$}%
\frag[ss]{ev.a}{}%
\frag[ss]{ev.ab}{}%
\frag[ss]{a(0)4}{$\theta\iter4(0)$}%
\frag[ss]{a(0)3}{$\theta\iter3(0)$}%
\frag[ss]{a(0)2}{$\theta\iter2(0)$}%
\frag[ss]{a(4s)}{$\theta(4y)$}%
\frag[ss]{a(1)}{$\theta(1)$}%
\frag[ss]{a(1)4}{$\theta\iter4(1)$}%
\frag[n]{b(s)}{$\widehat\theta(y)$}%
\frag[n]{b(k/4+s)}{$\widehat\theta(\sfrac j4+y)$}%
\frag[n]{b(3/4+s)}{$\widehat\theta(\sfrac34+y)$}%
\frag[n]{0<s<1/4}{$0\leq y < 1/4$}%
\frag[n]{0<s<1/4kin12}{$0\leq y < 1/4$, $j\in\gbra{1,2}$}%
\begin{figure}[p]
\begin{center}
\includegraphics{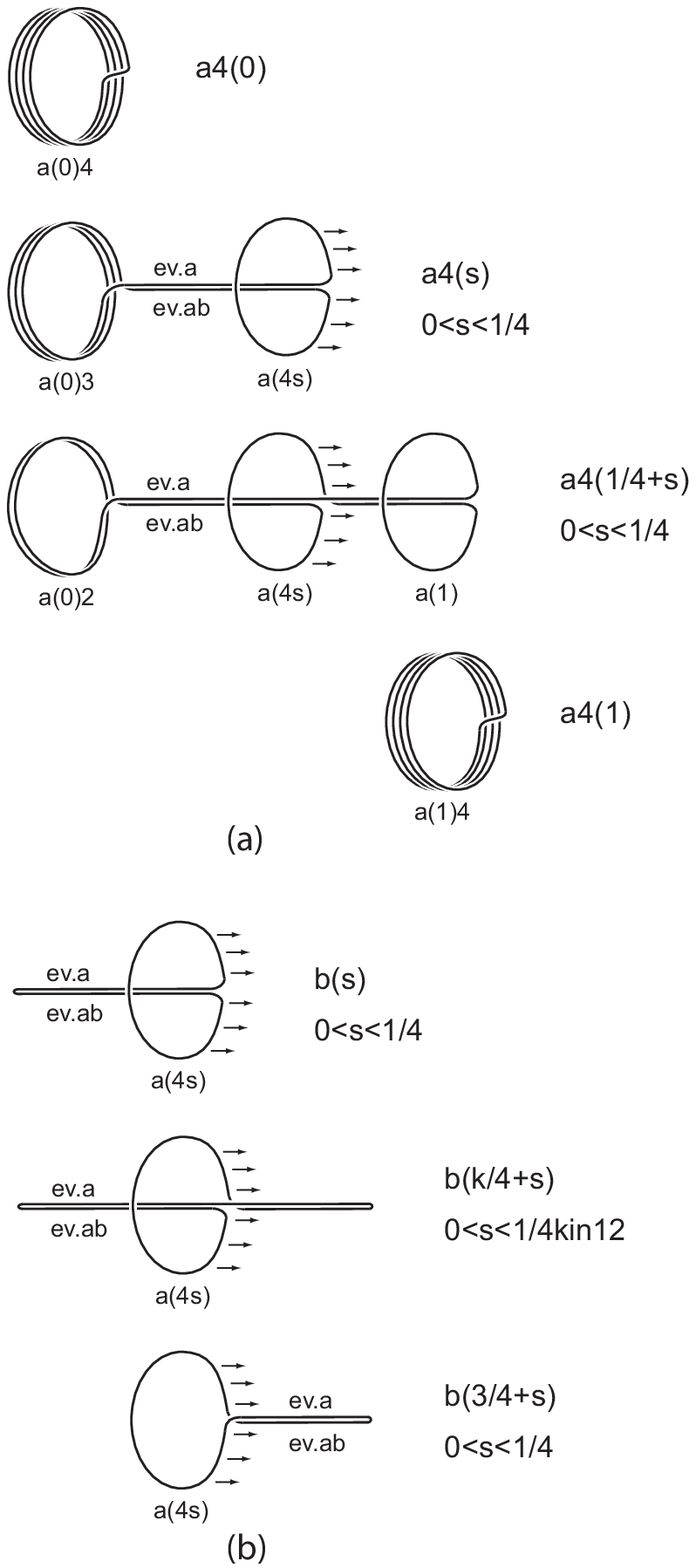}
\end{center}
\hcaption{\textbf{(a)} Description of  $\theta\siter 4:[0,1]\to\W(\T\iter{4};M)$, obtained from a map $\theta:[0,1]\to\W(\T;M)$. Here, for simplicity, we are assuming that the diameter of $\theta([x_0,x_1])$ is less than the injectivity radius of $M$, so that the horizontal geodesics are not broken. The arrows show the direction in which the loop $\theta(4y)$ is pulled as $y$ grows. \textbf{(b)} Description of the map of pulling loops $\widehat\theta:[0,1]\to\W(\T;M)$.
}
\label{f:bang}
\end{figure}

For each $x\in[x_0,x_1]$, we define the \textbf{pulling loop}  $\widehat\theta(x):\T\to M$  as the loop obtained erasing from the formula of $\tilde\theta\siter  n(x)$ the fixed parts  $\theta(x_0)$ and $\theta(x_1)$  and reparametrizing on $[0,1]$ (see the example in  figure~\ref{f:bang}(b)). Notice  that $\widehat\theta$ is independent of the integer $ n\in\N$ and, for each $x\in\N$, the action $\Act(\widehat\theta(x))$ is finite and depends continuously on $x$. In particular we obtain a finite constant
\[C(\theta):=\max_{x\in[x_0,x_1]} \gbra{\Act(\widehat\theta(x))}=
\max_{x\in[x_0,x_1]} \gbra{\int_0^1 \Lagr\!\cbra{t,\widehat\theta(x)(t),\der{}{t}{\widehat\theta}(x)(t)}\diff t} < \infty, \]
and, for each $ n\in\N$, the  estimate
\begin{equation}\label{e:est_act_siter}
\begin{split}
\s\s\act n(\theta\siter n(x))&\leq
\frac1 n \qbra{( n-1)\max\gbra{\Act(\theta(x_0)),\Act(\theta(x_1))} + \Act(\widehat\theta(x)) } \\
&\leq
\max\gbra{\Act(\theta(x_0)),\Act(\theta(x_1))} + \frac{C(\theta)} n.
\end{split}
\end{equation}

\frag[s]{Ds}{$s\Delta^q \subset \Delta^q$}%
\frag[s]{L}{$\Line$}%
\frag[s]{Lo}{$\Line^\bot$}%
\frag[ss]{x}{$\bm y$}%
\frag[ss]{a}{$x_0(\bm y, s)$}%
\frag[ss]{b}{$x_1(\bm y, s)$}%
\begin{figure}
\begin{center}
\includegraphics{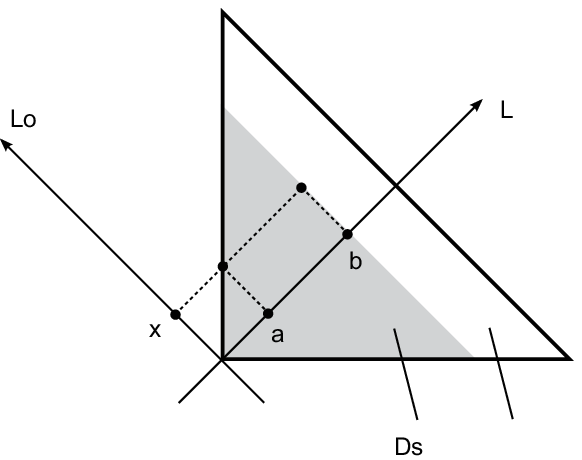}
\end{center}
\ccaption{}
\label{f:simplex}
\end{figure}

Now, let $\Line\subseteq\R^q$ be the straight line passing through the origin and the barycenter of the standard $q$-simplex $\Delta^q\subset\R^q$. According to the orthogonal decomposition $\R^q=\Line^\bot\oplus\Line$,  we can write the elements of $\Delta^q$ as $\bm z=(\bm y,x)\in\Line^\bot\oplus\Line$.
For each $s\in[0,1]$ we denote by $s\Delta^q$ the rescaled $q$-simplex, given by $\gbra{s\bm z\,|\,\bm z\in\Delta^q}$.
Varying $s$ from 1 to 0 we obtain a deformation retraction of $\Delta^q$ onto the origin of $\R^q$. For each $(\bm y,x)\in s\Delta^q$, we denote by $[x_0(\bm y, s),x_1(\bm y,s)]\subseteq\Line$ the maximum interval such that $(\bm y, x')$ belongs to $s\Delta^q$ for all $x'\in[x_0(\bm y, s),x_1(\bm y,s)]$ (see figure~\ref{f:simplex}).

Consider the $q$-singular simplex $\sigma$ of the statement. For each $ n\in\N$, we define the homotopy $\Ban\sigma n:[0,1]\times \Delta^q\to\W(\TT n;M)$ 
by
\begin{align*}
\Ban\sigma n(s,\bm z):=
\Biggl\{
  \begin{array}{lccl}
    \Bigl. \bigl(\sigma(\bm y,\cdot)|_{[x_0(\bm y, s),x_1(\bm y,s)]}\bigr)\siter n(x) &&&
    \bm z=(\bm y,x)\in s\Delta^q,\\ 
    \Bigl. \sigma\iter n(\bm z) & && \bm z\not\in s\Delta^q,\\  
  \end{array}
\end{align*}
for each $(s,\bm z)\in[0,1]\times\Delta^q$. This  homotopy $\Ban\sigma n$ is relative $\partial\Delta^q$, for
\[\Ban\sigma n(s,\bm z)=\sigma\iter n(\bm z),\s\s\forall(s,\bm z)\in[0,1]\times\partial\Delta^q,\]
and clearly $\Ban\sigma n(0,\cdot)=\sigma\iter n$. Take    $\epsilon >0$ such that
\[
\max_{\bm z\in\Delta^q} \Act(\sigma(\bm z))\leq c_2-\epsilon,\s\s
\max_{\bm z\in\partial\Delta^q} \Act(\sigma(\bm z))\leq c_1-\epsilon.
\]
For each $s\in[0,1]$, $ n\in\N$ and $\bm z= (\bm y,x)\in s\Delta^q$, by  the  estimate in~\eqref{e:est_act_siter} we have
\begin{equation*} 
\small
\act n(\Ban\sigma n(s,\bm z))  \leq 
\max\gbra{\Act(\sigma(x_0(\bm y,s))),\Act(\sigma(x_1(\bm y,s)))} + \frac{ C(\sigma(\bm y,\cdot)|_{[x_0(\bm y, s),x_1(\bm y,s)]}) } n,
\end{equation*}
while, for each $\bm z\in\Delta^q\setminus s\Delta^q$, we have
\begin{align*} 
\act n(\Ban\sigma n(s,\bm z))= \Act(\sigma(\bm z)).
\end{align*}
In particular, there exists a finite constant 
\[C(\sigma):= \max\gbra{ C(\sigma(\bm y,\cdot)|_{[x_0(\bm y, s),x_1(\bm y,s)]}) \,|\,s\in[0,1],\ (\bm y,x)\in s\Delta^q} \]
such that, for each $ n\in\N$ and $(s,\bm z)\in[0,1]\times\Delta^q$, we have
\begin{align*} 
\act n(\Ban\sigma n(s,\bm z))
\leq
\max_{\bm w\in\Delta^q}\gbra{\Act(\sigma(\bm w)} + \frac{C(\sigma)} n \leq c_2-\epsilon + \frac{C(\sigma)} n,\\
\act n(\Ban\sigma n(1,\bm z))
\leq
\max_{\bm w\in\partial\Delta^q}\gbra{\Act(\sigma(\bm w)} + \frac{C(\sigma)} n \leq c_1-\epsilon + \frac{C(\sigma)} n.
\end{align*}
These estimates prove that, for $n$ sufficiently big, the homotopy $\Ban \sigma n$ satisfies the properties stated in the lemma.
\end{proof}
\end{lem}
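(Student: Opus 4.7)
My plan is to construct the Bangert homotopy $\Ban\sigma n$ directly by an explicit \emph{pulling} procedure: given a path of loops, I will manufacture an $n$-periodic loop consisting of one ``moving'' copy of the original loop together with $n-1$ fixed copies taken from the endpoints of the path, glued together by short horizontal broken geodesics running along the curve of initial points. Because the action is averaged over $n$ periods, the contribution of the fixed copies dominates; taking $n$ large makes the action of the constructed loop as close as we want to the maximum of the actions at the two endpoints. Applying this fiberwise along a one-parameter foliation of $\Delta^q$ whose leaf-boundaries lie in $\partial\Delta^q$ will produce a deformation, relative $\partial\Delta^q$, of $\sigma\iter n$ into the $c_1$-sublevel.

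The first block of work is the construction, for a continuous path $\theta:[x_0,x_1]\to\W(\T;M)$, of a continuous path $\theta\siter n:[x_0,x_1]\to\W(\TT n;M)$. Composing $\theta$ with the evaluation map $\ev:\W(\T;M)\to M$, $\ev(\zeta)=\zeta(0)$, I obtain a uniformly continuous curve in $M$; choosing a mesh $\rho(\theta)$ smaller than the injectivity radius, any two of its values at parameter distance $\leq\rho(\theta)$ are joined by a unique shortest geodesic, the \emph{horizontal geodesic} $\theta_x^{x'}$, which by Proposition~\ref{p:fathi2} depends smoothly on $(x,x')$; concatenating them yields horizontal broken geodesics $\theta_{x_0}^x$ and $\theta_x^{x_1}$. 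I then partition $[x_0,x_1]$ into $n$ equal subintervals and, on the $j$-th subinterval, define $\theta\siter n$ to be the concatenation of $n-j-1$ copies of $\theta(x_0)$, a horizontal broken geodesic, one moving copy of $\theta(x_0+ny)$, another broken geodesic, $j$ copies of $\theta(x_1)$, and finally a closing broken geodesic back from $\ev\circ\theta(x_1)$ to $\ev\circ\theta(x_0)$; the first and last subintervals omit the absent blocks of fixed copies. After reparametrization (unit time for each fixed copy, the remainder shared by the moving copy and the geodesic arcs according to their original parametrizations), I obtain a continuous map into $\W(\TT n;M)$. Erasing the fixed copies produces the \emph{pulling loop} $\widehat\theta(x):\T\to M$, which is independent of $n$ and depends continuously on $x$; hence $C(\theta):=\max_x\Act(\widehat\theta(x))<\infty$, and splitting the $n$-periodic action into its fixed and pulling contributions gives
\begin{align*}
\act n(\theta\siter n(x))\leq\max\{\Act(\theta(x_0)),\Act(\theta(x_1))\}+\frac{C(\theta)}{n}.
\end{align*}

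The second block assembles these paths into the homotopy. Let $\Line$ be the line through the origin and the barycenter of $\Delta^q$, write $\bm z=(\bm y,x)\in\Line^\bot\oplus\Line$, and set $s\Delta^q:=\{s\bm z:\bm z\in\Delta^q\}$; for $\bm z=(\bm y,x)\in s\Delta^q$, let $[x_0(\bm y,s),x_1(\bm y,s)]$ be the maximal subinterval of $\Line$ on which the fiber stays in $s\Delta^q$, noting that the endpoints $(\bm y,x_i(\bm y,s))$ lie on $\partial(s\Delta^q)$ and, at $s=1$, on $\partial\Delta^q$. I then define
\begin{align*}
\Ban\sigma n(s,\bm z):=\begin{cases}
\bigl(\sigma(\bm y,\cdot)\big|_{[x_0(\bm y,s),x_1(\bm y,s)]}\bigr)\siter n(x),&\bm z=(\bm y,x)\in s\Delta^q,\\
\sigma\iter n(\bm z),&\bm z\notin s\Delta^q.
\end{cases}
\end{align*}
Continuity across $\partial(s\Delta^q)\setminus\partial\Delta^q$ holds because at the interval endpoints the pulling construction degenerates to the pure $n$-fold iteration of the corresponding loop $\sigma(\bm y,x_i(\bm y,s))$; when $\bm z\in\partial\Delta^q$ the fiber interval collapses and $\Ban\sigma n(s,\bm z)=\sigma\iter n(\bm z)$, so the homotopy is relative $\partial\Delta^q$; and by construction $\Ban\sigma n(0,\cdot)=\sigma\iter n$.

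For the action estimate, choose $\epsilon>0$ with $\Act(\sigma(\bm z))\leq c_2-\epsilon$ on $\Delta^q$ and $\leq c_1-\epsilon$ on $\partial\Delta^q$. Applying the bound from the first block to each restricted path $\sigma(\bm y,\cdot)|_{[x_0(\bm y,s),x_1(\bm y,s)]}$, and letting $C(\sigma)$ be the supremum of the corresponding constants $C(\theta)$ (finite by continuity and compactness), I obtain uniformly $\act n\circ\Ban\sigma n\leq c_2-\epsilon+C(\sigma)/n$, and on $\{s=1\}$ the endpoints $x_i(\bm y,1)$ always belong to $\partial\Delta^q$, so $\act n(\Ban\sigma n(1,\cdot))\leq c_1-\epsilon+C(\sigma)/n$. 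Setting $\bar n(\Lagr,\sigma):=\lceil C(\sigma)/\epsilon\rceil$ gives the required inclusions for all $n\geq\bar n$, and the final conclusion $\itmap n_*[\sigma]=0$ in $\pi_q(\sub n{c_2},\sub n{c_1})$ is immediate. The main obstacle I anticipate is bookkeeping-based rather than conceptual: ensuring that the concatenation-and-reparametrization procedure in the first block produces a path that is continuous into the $W^{1,2}$ topology of $\W(\TT n;M)$ (not merely $C^0$), which is exactly why the horizontal geodesics must be short enough to be unique action minimizers and must depend smoothly on their endpoints via Proposition~\ref{p:fathi2}; the matching at $\partial(s\Delta^q)$ also deserves care, but reduces to the tautology that a broken path of length zero is a point.
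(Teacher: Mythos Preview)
Your proposal is correct and follows essentially the same approach as the paper: the same pulling construction $\theta\siter n$ via horizontal broken geodesics along the initial-point curve, the same pulling-loop estimate $\act n(\theta\siter n(x))\leq\max\{\Act(\theta(x_0)),\Act(\theta(x_1))\}+C(\theta)/n$, and the same assembly of the homotopy via the line through the barycenter and the scaled simplices $s\Delta^q$. The only cosmetic differences are that you make $\bar n=\lceil C(\sigma)/\epsilon\rceil$ explicit and phrase the boundary check slightly differently (note that for $\bm z\in\partial\Delta^q$ the fiber interval need not literally collapse; rather $\bm z$ either lies outside $s\Delta^q$ or on $\partial(s\Delta^q)$, and in both cases the construction returns $\sigma\iter n(\bm z)$).
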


\begin{rem}\label{r:BangertBd}
In section~\ref{s:modif_homVan} we will need the following observation. Assume that the singular simplex $\sigma$ of lemma~\ref{l:vanish} has $W^{1,\infty}$-bounded image, i.e.\ there exists a real $\bar R'$ such that 
\[ \sup_{\bm z\in\Delta^q}\ \esssup_{t\in\T} \gbra{\abs{\der{}{t}\, \sigma(\bm z)(t) }_{\sigma(\bm z)(t)}  } \leq \bar R' .\]
Then the Bangert homotopies $\Ban{\sigma}n$ have $W^{1,\infty}$-bounded image as well, and this bound is uniform in $n\geq\bar n(\Lagr,\sigma)$. In other words,  there exists a  real $\bar R\geq\bar R'$ such that, for every integer $n\geq\bar n(\Lagr,\sigma)$,   we have
\[  
\sup_{(s,\bm z)\in[0,1]\times\Delta^q}
\esssup_{t\in\TT{n}} 
\gbra{ 
	\abs{
	\der{}{t}\, 
	\Ban{\sigma}n(s,\bm z)(t) 
	}_{\Ban{\sigma} n(s,\bm z)(t)}
} 
\leq 
\bar R .\]
\end{rem}

\begin{proof}[Proof of theorem~\ref{t:vanish}]
We denote by $\Sigma(\mu)$ the set of singular simplices in $\mu$ together with all their faces, and by $\K\subset\N$ the set of nonnegative integer powers of $p$, i.e. $\K=\gbra{p^n\,|\,n\in\N\cup\gbra0}$. The idea of the proof is to apply lemma~\ref{l:vanish} successively to all the elements of $\Sigma(\mu)$. More precisely, for each singular simplex $\sigma:\Delta^q\to(\Act)_{c_2}$ that belongs to $\Sigma(\mu)$, we will find $\bar n=\bar n(\Lagr,\sigma,p)\in\K$ and a homotopy
\[ P_{\sigma}\iter{\bar n}:[0,1]\times \Delta^{q}\to\sub{\bar n} {c_2}, \]
such that
\begin{itemize}
\item[(i)]  $P_{\sigma}\iter{\bar n}(0,\cdot)=\sigma\iter{\bar n}$,
\item[(ii)]  $P_{\sigma}\iter{\bar n}(1,\Delta^q) \subset (\act{\bar n})_{c_1}$,
\item[(iii)] if $\sigma(\Delta^q)\subset (\Act)_{c_1}$, then $P_{\sigma}\iter{\bar n}(s,\cdot)=\sigma\iter{\bar n}$ for each $s\in[0,1]$,
\item[(iv)] $P_{\sigma\circ F_i}\iter{\bar n}=P_{\sigma}\iter{\bar n}(\cdot,F_i(\cdot))$ for each $i=0,...,q$, where $F_i:\Delta^{q-1}\to\Delta^q$ is the standard affine map onto the $i^{\mathrm{th}}$ face of $\Delta^q$.
\end{itemize}
For each $n\in\K$ greater  than $\bar n$, we define a homotopy $P_{\sigma}\iter{n}:[0,1]\times \Delta^{q}\to\sub{n} {c_2}$ by $P_{\sigma}\iter{n}=\itmap{n/\bar n}\circ P_{\sigma}\iter{\bar n}$. This homotopy satisfies the analogous properties (i),...,(iv) in period $n$. Notice that property (iv) implicitly requires that $\bar n(\Lagr,\sigma,p)\geq\bar n(\Lagr,\sigma\circ F_i,p)$ for each $i=0,...,q$.

Now, assume that such homotopies exist and put 
\[\bar n=\bar n(\Lagr,[\mu],p):=\max\gbra{\bar n(\Lagr,\sigma,p)\,|\,\sigma\in\Sigma(\mu)}\in\K.\] 
Then,  we have a family of homotopies $\{P_{\sigma}\iter{\bar n}\,|\,\sigma\in\Sigma(\mu)\}$ satisfying the above properties. Therefore, a classical result in algebraic topology (basically, a variation of the homotopic invariance of singular homology, see \cite[lemma 1]{b:BK}) implies that $\itmap {\bar n}_*[\mu]=0$ in $\Hom_*(\sub{\bar n} {c_2},\sub{\bar n} {c_1})$.

In order to conclude the proof, we only need to build the above homotopies.  We do it  inductively on the degree of the relative cycle  $\mu$. If $\mu$ is a $0$-relative cycle, then  $\Sigma(\mu)$ is simply a finite  set of contractible loops that is contained in $(\Act)_{c_2}$. Let $\gamma\in\Sigma(\mu)$ be one of these loops. If $\gamma\in(\Act)_{c_1}$  we simply  set $\bar n=\bar n(\Lagr,\gamma,p):=1$ and $P_\gamma\iter{\bar n}(s):=\gamma$ for each $s\in[0,1]$. If $\gamma\not\in(\Act)_{c_1}$, since we are assuming that $(\Act)_{c_1}$ is non-empty, we can find a  continuous path  \[\Gamma:([0,1],\gbra{0,1})\to(\wc,(\Act)_{c_2})\] such that $\Gamma(0)=\gamma$ and $\Gamma(1)\in(\Act)_{c_1}$.  By  lemma~\ref{l:vanish}, for every integer $n\geq\bar n(\Lagr,\Gamma)$ there exists a   Bangert homotopy 
\[  B_{\Gamma}\iter n: [0,1]\times ([0,1],\gbra{0,1})\to(\WC n,(\act n)_{c_2})  \s\s\mbox{relative   $\gbra{0,1}$} \]
such that $B_{\Gamma}\iter n(0,\cdot)=\Gamma\iter n$ and $B_{\Gamma}\iter n(1,[0,1]) \subset (\act n)_{c_2}$. Then, we set 
\[\bar n=\bar n(\Lagr,\gamma,p):=\min\{n\in\K\,|\, n\geq \bar n(\Lagr,\Gamma) \}\] 
and we define the map $P_\gamma\iter{\bar n}:[0,1] \to (\act{\bar n})_{c_2}$ as $P_\gamma\iter{\bar n}:=\Ban\gamma{\bar n}(1,\cdot)$, see figure~\ref{f:BangDim0}. 

\frag[n]{Bgm}{$\Ban{\Gamma}{\bar n}$}%
\frag{D}{$\Gamma\iter{\bar n}$}%
\frag{B}{$P_{\gamma}\iter{\bar n}$}%
\frag{Z}{$\equiv \gamma\iter{\bar n}$}%
\frag{C}{$\equiv \Gamma\iter{\bar n}(1)$}%

\begin{figure}
\begin{center}
\includegraphics[scale=1]{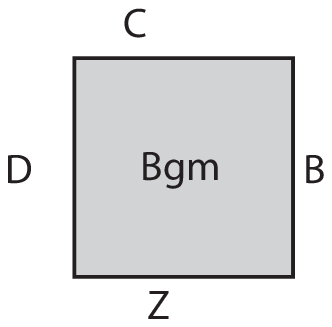}
\end{center}
\ccaption{}
\label{f:BangDim0}
\end{figure}

In case $\mu$ is a $q$-relative cycle, with $q\geq 1$, we can apply the inductive hypothesis: for every  nonnegative integer $j<q$ and for each $j$-singular simplex $\nu\in\Sigma(\mu)$ we obtain  $\bar n(\Lagr,\nu,p)\in\K$ and, for every  $n\in\K$ greater or equal than $\bar n(\Lagr,\nu,p)$, a homotopy $P_\nu\iter n$ satisfying the above properties (i),...,(iv). Now, consider a $q$-singular simplex $\sigma\in\Sigma(\mu)$. If $\sigma(\Delta^q)\subset(\Act)_{c_1}$ we simply  set $\bar n=\bar n(\Lagr,\sigma,p):=1$ and $P_\sigma\iter{\bar n}(s,\cdot):=\sigma$ for each $s\in[0,1]$. Hence, let us assume that $\sigma(\Delta^q)\not\subset(\Act)_{c_1}$.  We denote by $\bar n'=\bar n'(\Lagr,\sigma,p)$ the maximum of the $\bar n(\Lagr,\nu,p)$'s for all the proper faces $\nu$ of $\sigma$. For each   $n\in\K$ greater or equal than $\bar n'$,  every proper face $\nu$ of $\sigma$ has an associated  homotopy   
$P_{\nu}\iter n: [0,1]\times \Delta^{q-1} \to   \sub{n}{c_2}$.
For technical reasons, let us assume that $P_{\nu}\iter {n}(s,\cdot)=P_{\nu}\iter n(\sfrac12,\cdot)$ for each  $s\in[\sfrac12,1]$. Patching together the homotopies of the proper faces of $\sigma$, we obtain
\[  P_\sigma\iter n: ([0,\sfrac12]\times \partial\Delta^q) \cup (\gbra0\times \Delta^q) \to \sub{n}{c_2},\s\s\forall n\in\K,\ n\geq\bar n',\]
such that $P_\sigma\iter n(0,\cdot)=\sigma\iter n$ and $P_\sigma\iter n(\cdot,F_i(\cdot))=P_{\sigma\circ F_i}\iter n$ for each $i=0,...,q$. By retracting $[0,\sfrac12]\times \Delta^q$ onto $([0,\sfrac12]\times \partial\Delta^q) \cup (\gbra0\times \Delta^q)$ we can extend  the homotopy $P_\sigma\iter n$ to the whole $[0,\sfrac12]\times \Delta^q$, obtaining
\begin{align}\label{e:P_homot_1}  
P_\sigma\iter n: [0,\sfrac12]\times \Delta^q \to \sub{n}{c_2}, \s\s\forall n\in\K,\ n\geq\bar n'.
\end{align}
Notice that $P_\sigma\iter {\bar n'}(\sfrac12,\cdot)$ is a singular simplex of the form
\[
P_\sigma\iter {\bar n'}(\sfrac12,\cdot): (\Delta^q,\partial\Delta^q) \to (\sub{\bar n'}{c_2},\sub{\bar n'}{c_1} ).
\]
Let us briefly denote this singular simplex by $\tilde\sigma$. By  lemma~\ref{l:vanish}, there exists  $\bar n''\in\K$ greater or equal than $\bar n(\Lagr,\tilde\sigma)$  and a Bangert homotopy
\begin{align*} 
B_{\tilde\sigma}\iter {\bar n''}: [0,1]\times (\Delta^q, \partial\Delta^q )\to (\sub{\bar n'\bar n''}{c_2},\sub{\bar n'\bar n''}{c_1})\s\s\mbox{relative }\partial \Delta^q,
\end{align*}
such that $B_{\tilde\sigma}\iter {\bar n''}(0,\cdot)=\tilde\sigma\iter {\bar n''}=P_\sigma\iter {\bar n'\bar n''}(\sfrac12,\cdot)$ and  $B_{\tilde\sigma}\iter {\bar n''}(1,\Delta^q)\subset(\act {\bar n'\bar n''})_{c_1}$. 
Finally, we set $\bar n=\bar n(\Lagr,\sigma,p):=\bar n'\bar n''$ and we build the homotopy \[P_\sigma\iter{\bar n}: [0,1]\times \Delta^q \to \sub{\bar n}{c_2}\] 
extending the one in~\eqref{e:P_homot_1} by $P_\sigma\iter{\bar n}(s,\cdot):= B_{\tilde\sigma}\iter {\bar n''}(2s-1,\cdot)$ for each $s\in[\sfrac12,1]$. 
\end{proof}

\section{Convex quadratic modifications}\label{s:conv_quad_modifications}

Throughout this section, $\Lagr:\T\times\Tan M\to\R$ will be a Tonelli Lagrangian with associated action $\act\tau$, $\tau\in\N$. We want to show how several techniques from the $\W$ Morse Theory of the action functional  still apply in the Tonelli case. This will require some work, since the  Tonelli action functional $\act\tau$ is not even continuous on the $\W$ loop space.

First of all, notice that the Tonelli assumptions and the compactness of $M$ imply that $\Lagr$ is \textbf{uniformly} fiberwise superlinear (namely, the limit in \textbf{(T2)} is uniform in $(t,q)\in\T\times M$). In fact, if $\Ham$ is the   Hamiltonian that is Legendre dual to $\Lagr$ (see section~\ref{s:CZLetc}) and $k>0$, for each $(t,q,v)\in\T\times\Tan M$ we have
\begin{align*}
\Lagr(t,q,v) & \geq 
\max_{|p|_q\leq k} \gbra{ p(v) - \Ham(t,q,p) } 
\geq
\max_{|p|_q\leq k} \gbra{ p(v)} - \max_{|p|_q\leq k} \gbra{ \Ham(t,q,p) } \\
& \geq 
k\,|v|_q - \max \gbra{ \Ham(t',q',p')\,|\,(t',q',p')\in\T\times\Tan^*M,\ |p'|_{q'}\leq k }.
\end{align*}
In particular, if we put $C(\Lagr):=\max \gbra{ \Ham(t,q,p)\,|\,(t,q,p)\in\T\times\Tan^*M,\ |p|_{q}\leq 1 }$, we get
\[ \Lagr(t,q,v)\geq |v|_q - C(\Lagr),\s\s\forall (t,q,v)\in\T\times\Tan M. \]
Following Abbondandolo and Figalli \cite[section~5]{b:AbFig}, for each real $R>0$ we say that a  convex quadratic-growth Lagrangian  $\Lagr_R:\T\times\Tan M\to\R$ is a \textbf{convex quadratic $R$-modification}  (or simply an \textbf{$R$-modification}) of the Tonelli Lagrangian $\Lagr$  when:
\begin{itemize}
  \item[\textbf{(M1)}] $\Lagr_R(t,q,v)=\Lagr(t,q,v)$ for each $(t,q,v)\in\T\times\Tan M$ with $\abs v _q \leq R$,
  \item[\textbf{(M2)}] $\Lagr_R(t,q,v)\geq \abs v _q - C(\Lagr)$ for each $(t,q,v)\in\T\times\Tan M$.
\end{itemize}
It is always possible to build a convex quadratic  modification of a given Tonelli Lagrangian (see \cite[page~637]{b:AbFig}). For each $R>0$, we will denote by $\Lagr_R$ an arbitrary $R$-modification of the Tonelli Lagrangian $\Lagr$ with associated action $\act \tau_R:\W(\TT \tau;M)\to\R$ (for each $\tau\in\N$), i.e.
\begin{align*}
\act \tau_R(\zeta)&=\frac1 \tau\int_0^\tau \Lagr_R(t,\zeta(t),\dot\zeta(t))\,\diff t,&\forall\zeta\in\W(\TT \tau;M).
\end{align*}
As before, we will simply write $\Act_R$ for $\act 1_R$. Notice that, if $\gamma:\TT\tau\to M$ is a smooth $\tau$-periodic solution of the Euler-Lagrange system of $\Lagr$ and $R>\max\{|\dot\gamma(t)|_{\gamma(t)}\,|\,t\in\TT\tau\}$, then $\gamma$ is a critical point of $\act\tau_R$. Moreover, the Hessian of $\act\tau_R$ at $\gamma$ depends only on $\Lagr$, since $\Lagr$ and $\Lagr_R$ coincide along the lifted curve $(\gamma,\dot\gamma):\TT\tau\to \Tan M$. In particular, the Morse index and nullity pair $(\iota(\gamma),\nu(\gamma))$ of $\act\tau_R$ at $\gamma$ is independent of the chosen $R$, and in fact coincides with the Conley-Zehnder-Long index pair of $\gamma$.

One of the  important features  of convex quadratic modifications  is given by the following a priori estimate, that is due to Abbondandolo and Figalli (see \cite[lemma~5.2]{b:AbFig} for a proof).

\begin{lem}\label{l:apriori}
For each $\aaa>0$ and $\ttt\in\N$, there exists  $\rrr=\rrr(\aaa,\ttt)>0$ such that, for any $R$-modification $\Lagr_R$ of $\Lagr$ with $R>\rrr$ and for any  $\tau\in\gbra{1,...,\ttt}$, the following holds:  if $\gamma$ is a critical point of $\act \tau_R$  such that  $\act \tau_R(\gamma)\leq \aaa$, then  $\max\{|\dot\gamma(t)|_{\gamma(t)} \,|\, t\in\TT\tau \}\leq \rrr$. 
In particular, $\gamma$ is a $\tau$-periodic solution of the Euler-Lagrange system of $\Lagr$, and $\act \tau(\gamma)=\act \tau_R(\gamma)$.
\hfill$\qed$
\end{lem}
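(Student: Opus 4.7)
The plan is to bootstrap an $L^1$-type length bound coming from condition (M2) into a pointwise bound on $|\dot\gamma|$, using the compactness properties of the Hamiltonian flow of the \emph{unmodified} Tonelli Hamiltonian $\Ham$; the key subtlety is to choose $\rrr$ independently of $R$. First, I combine the hypothesis $\act\tau_R(\gamma) \leq \aaa$ with (M2) to deduce
$$\int_0^\tau |\dot\gamma(t)|_{\gamma(t)}\, \diff t \leq \tau(\aaa + C(\Lagr)),$$
so the mean value theorem produces some $t_0 \in [0,\tau]$ with $|\dot\gamma(t_0)|_{\gamma(t_0)} \leq \rrr_0 := \aaa + C(\Lagr)$, a constant depending only on $\aaa$ and $\Lagr$.

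Second, I construct an $R$-independent compact region in the cotangent bundle using the Hamiltonian $\Ham$ Legendre-dual to $\Lagr$, whose flow $\Phi_\Ham^t$ is global by (T3). Setting
$$K_1 := \sup\{|\partial_v\Lagr(t,q,v)|_q : t \in \T,\ (q,v) \in \Tan M,\ |v|_q \leq \rrr_0\} < \infty,$$
the set $\mathcal{K}_0 := \{(q,p) \in \Tan^* M : |p|_q \leq K_1\}$ is compact, and hence so is
$$\mathcal{K}_\ttt := \bigcup_{|s| \leq \ttt} \Phi_\Ham^s(\mathcal{K}_0).$$
I then define $\rrr \geq \rrr_0$ as the supremum of $|v|_q$ over those $(t,q,v) \in \T \times \Tan M$ such that $(q,\partial_v\Lagr(t,q,v)) \in \mathcal{K}_\ttt$; this is finite since the Legendre map $\Leg$ is a diffeomorphism and $\T \times \mathcal{K}_\ttt$ is compact.

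Finally, I run the bootstrap. Fix $R > \rrr$ and a critical point $\gamma$ satisfying the hypotheses. Let $I \subseteq \TT\tau$ be the connected open arc of $\{t : |\dot\gamma(t)|_{\gamma(t)} < R\}$ containing $t_0$. On $I$, condition (M1) ensures that $\Lagr$ and $\Lagr_R$ coincide on an open neighborhood of $(\gamma(t), \dot\gamma(t))$, so $\gamma|_I$ solves the Euler-Lagrange system of $\Lagr$ and its Legendre lift $(\gamma, p)|_I$ is an integral curve of $X_\Ham$ starting at $(\gamma(t_0), p(t_0)) \in \mathcal{K}_0$. Since the length of $I$ does not exceed $\tau \leq \ttt$, this forces $(\gamma(t), p(t)) \in \mathcal{K}_\ttt$ throughout $I$, and by definition of $\rrr$ we obtain $|\dot\gamma(t)|_{\gamma(t)} \leq \rrr < R$ on $I$. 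If $I$ were a proper subset of $\TT\tau$, continuity would produce a boundary point with $|\dot\gamma| = R$, contradicting $\rrr < R$. Hence $I = \TT\tau$ and $\max_{t \in \TT\tau} |\dot\gamma(t)|_{\gamma(t)} \leq \rrr$; the last assertion of the lemma is then immediate from (M1).

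The main obstacle is the $R$-dependence of the Hamiltonian $\Ham_R$ Legendre-dual to $\Lagr_R$: a direct estimate of $\Ham_R$ along the orbit cannot yield an $R$-uniform bound, because the quadratic-growth constants of $\Ham_R$ worsen as $R$ increases. The workaround is precisely the bootstrap above: by confining the argument to the sub-arc $I$ where $\Lagr$ and $\Lagr_R$ agree along the trajectory, the dynamics reduce to those of the fixed Tonelli $\Ham$, whose global flow supplies the required $R$-independent compactness; the strict inequality $R > \rrr$ then prevents $I$ from having any boundary, closing the argument.
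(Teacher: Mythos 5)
The paper offers no proof of this lemma; it refers the reader to \cite[lemma~5.2]{b:AbFig}. Your strategy---use (M2) to obtain an $L^1$ bound on $|\dot\gamma|$, locate a time $t_0$ with $|\dot\gamma(t_0)|$ bounded by a constant depending only on $\aaa$ and $\Lagr$, pass to the cotangent bundle, and propagate this bound via the completeness of the \emph{unmodified} Hamiltonian flow $\Phi_\Ham$ through a continuity/bootstrap argument confined to the region where $\Lagr_R=\Lagr$---is the natural one, and as far as I can tell it agrees in spirit with the proof of Abbondandolo and Figalli. The key observation that $R>\rrr$ prevents the open arc $I$ from having any boundary is exactly what makes the bootstrap close.

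There is, however, one genuine though easily repairable gap. Under the paper's convention, $\Phi_\Ham^s$ is the flow of the \emph{time-dependent} Hamiltonian $\Ham$ from time $0$ to time $s$, so your set $\mathcal{K}_\ttt=\bigcup_{|s|\leq\ttt}\Phi_\Ham^s(\mathcal{K}_0)$ consists of points reached from $\mathcal{K}_0$ \emph{starting at time~$0$}. But your Legendre lift $\Gamma=(\gamma,p)$ starts at a generic time $t_0\in[0,\tau]$, so its value at time $t$ is $\Phi_\Ham^{t}\circ(\Phi_\Ham^{t_0})^{-1}(\Gamma(t_0))$, which need not lie in your $\mathcal{K}_\ttt$; the two would coincide only if $\Ham$ were autonomous. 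The fix is cheap: by $1$-periodicity of $\Ham$ in $t$ you may reduce to $t_0\in[0,1]$, and the set $\mathcal{K}_\ttt':=\bigl\{\Phi_\Ham^{t}\circ(\Phi_\Ham^{t_0})^{-1}(x)\ :\ t_0\in[0,1],\ |t-t_0|\leq\ttt,\ x\in\mathcal{K}_0\bigr\}$ is still compact, being the continuous image of $[0,1]\times[-\ttt,1+\ttt]\times\mathcal{K}_0$ under the (global) two-parameter flow; replacing $\mathcal{K}_\ttt$ by $\mathcal{K}_\ttt'$ in the definition of $\rrr$ closes the argument. Apart from this, the proof is correct, and the final assertions of the lemma follow from (M1) as you say.
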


\subsection{Convex quadratic modifications and local homology}\label{s:modif&loc_hom}

Let $\gamma$ be a contractible integer  periodic solution of the Euler-Lagrange system associated to the Tonelli Lagrangian $\Lagr$. In order to simplify the notation, we  assume that the period of $\gamma$ is $1$, so that it is a map of the form $\gamma:\T\to M$. We fix a real constant $U>0$ such that
\begin{align}\label{e:choose_U} 
U > \max_{t\in\T} \gbra{ |\dot\gamma(t)|_{\gamma(t)} },
\end{align}
and we consider a $U$-modification $\Lagr_U$ of $\Lagr$. Notice that, by~\eqref{e:choose_U}, $\gamma$ is a critical point of $\Act_U$ with $\Act_U(\gamma)=\Act(\gamma)$.

Now, for each $k\in\N$ sufficiently big, we consider the $k$-broken Euler-Lagrange loop space $\Lambda_k=\Lambda_{k,\Lagr_U}$ associated to $\Lagr_U$ (see section~\ref{s:discr_act_funct}), and we denote by $\Act_{U,k}$ its discrete action, i.e. $\Act_{U,k}=\Act_U|_{\Lambda_k}$. We recall that $\Lambda_k$ is a finite dimensional submanifold of $\W(\T;M)$ and, in particular, its topology coincides with the one induced as a subspace of $W^{1,\infty}(\T;M)$. Therefore, we can define an open set $\catU_k\subset\Lambda_k$ by
\[ \catU_k:=\gbra{ \zeta\in\Lambda_k\ \biggl|\ \max_{t\in\T}\{|\dot\zeta(t)|_{\zeta(t)}\}<U   }. \]
Notice that, for $k\in\N$ sufficiently big, $\gamma$ belongs to $\catU_k$. Moreover, the action $\Act_U$ coincides with  the Tonelli action $\Act$ on the open set $\catU_k$. This allows us to define the \textbf{discrete Tonelli action} $\Act_k:\catU_k\to\R$ by
\[ \Act_k:=\Act|_{\catU_k}=\Act_{U}|_{\catU_k}=\Act_{U,k}|_{\catU_k}. \]
Since $\Act_{U,k}$ is smooth, we readily obtain that the discrete Tonelli action $\Act_k$ is smooth as well. Moreover, the germ of $\Act_k$ at $\gamma$ turns out to be independent of the chosen $U$ and of the chosen $U$-modification $\Lagr_U$ of $\Lagr$, as stated by the following.

\begin{lem}\label{l:germ}
Consider a real constant  $R\geq U$,  an $R$-modification   $\Lagr_R$ of $\Lagr$ with associated discrete action $\Act_{R,k}$ and a sufficiently big  $k\in\N$  so that both $\Act_{R,k}$ and $\Act_{U,k}$ are defined. Then,  there exists $\catV_k\subset \catU_k$ that is an open subset of both $\Lambda_{k,\Lagr_U}$ and $\Lambda_{k,\Lagr_R}$ and that contains $\gamma$. In particular $\Act_k|_{\catV_k} =\Act_{U,k}|_{\catV_k} =\Act_{R,k}|_{\catV_k}$.   
\begin{proof}
The Lagrangian functions $\Lagr_U$ and $\Lagr_R$ coincide on a neighborhood of the support of the lifted curve $(\gamma,\dot\gamma):\T\to\Tan M$. Therefore the $k$-broken Euler-Lagrange loops of $\Lagr_U$ and $\Lagr_R$ that are close to $\gamma$  are the same, and the claim follows.
\end{proof}
\end{lem}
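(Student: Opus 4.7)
The plan is to exploit the fact that both $\Lagr_U$ and $\Lagr_R$ coincide with $\Lagr$ on the open set $\mathcal{O} := \{(t,q,v)\in\T\times\Tan M \,:\, |v|_q < U\}$, since both are modifications satisfying \textbf{(M1)} and $R\geq U$. By \eqref{e:choose_U}, the lifted curve $(\gamma,\dot\gamma)$ lies strictly inside $\mathcal{O}$.

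First I would fix $k$ large enough that $1/k$ is below both thresholds $\epsilon_0(\Lagr_U)$ and $\epsilon_0(\Lagr_R)$ from Proposition~\ref{p:fathi}, so that both $\Lambda_{k,\Lagr_U}$ and $\Lambda_{k,\Lagr_R}$ are diffeomorphic, via the sampling map $\zeta\mapsto (\zeta(0),\zeta(\sfrac1k),\dots,\zeta(\sfrac{k-1}k))$, to open subsets of $M^k$ around the point $\mathbf{q}_\gamma := (\gamma(0),\dots,\gamma(\sfrac{k-1}k))$. The heart of the argument is to show that the two inverse sampling charts agree on a neighborhood of $\mathbf{q}_\gamma$. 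Given samplings $(q_0,\dots,q_{k-1})$ close to $\mathbf{q}_\gamma$, the $\Lagr_U$-minimizer $\zeta_U|_{[i/k,(i+1)/k]}$ joining $q_i$ to $q_{i+1}$ exists and is unique by Proposition~\ref{p:fathi} and depends smoothly on $(q_i,q_{i+1})$ by Proposition~\ref{p:fathi2}; in particular, for samplings close enough to $\mathbf{q}_\gamma$, it stays $C^1$-close to $\gamma|_{[i/k,(i+1)/k]}$ and hence inside $\mathcal{O}$. There $\Lagr_U \equiv \Lagr_R$ together with all derivatives, so $\zeta_U$ also solves the Euler-Lagrange equation of $\Lagr_R$. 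The implicit function construction in the proof of Proposition~\ref{p:fathi2} applied to $\Lagr_R$ gives local uniqueness of such Euler-Lagrange solutions with prescribed close endpoints, and this unique solution must be the $\Lagr_R$-minimizer $\zeta_R|_{[i/k,(i+1)/k]}$ itself; hence $\zeta_U = \zeta_R$ on each segment.

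Once this piecewise coincidence is established, I would define $\catV_k$ as the image, under the sampling chart of $\Lambda_{k,\Lagr_U}$, of a small enough open neighborhood of $\mathbf{q}_\gamma$ in $M^k$ — small enough that additionally every resulting loop satisfies $|\dot\zeta(t)|_{\zeta(t)} < U$ for all $t$, which is achievable by continuity since $\gamma$ itself satisfies the strict bound \eqref{e:choose_U} and the sampling-to-loop map is continuous into the $W^{1,\infty}$ topology of $\Lambda_k$. Then $\catV_k$ is open in $\Lambda_{k,\Lagr_U}$ by construction, open in $\Lambda_{k,\Lagr_R}$ because the two sampling charts produce the same curves on this neighborhood, and contained in $\catU_k$ by the velocity bound; on $\catV_k$ the lifted loops $(\zeta,\dot\zeta)$ take values in $\mathcal{O}$, so the three integrands agree pointwise and the three discrete actions $\Act_k$, $\Act_{U,k}$, $\Act_{R,k}$ coincide on $\catV_k$.

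The main obstacle is the segment-coincidence step: one must combine the uniqueness of short action minimizers for convex quadratic-growth Lagrangians (Proposition~\ref{p:fathi}) with the local uniqueness, furnished by the implicit function argument inside the proof of Proposition~\ref{p:fathi2}, of Euler-Lagrange solutions with prescribed close-to-$\gamma$ endpoints. Everything else — the openness in both smooth structures, the velocity bound, and the identity of the three actions — is a routine consequence of hypothesis \textbf{(M1)} together with \eqref{e:choose_U}.
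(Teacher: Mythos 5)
Your proposal is correct and expands, with careful use of Propositions~\ref{p:fathi} and~\ref{p:fathi2}, exactly the argument the paper compresses into one line: $\Lagr_U$ and $\Lagr_R$ agree on a neighborhood of the lift of $\gamma$, hence the short action minimizers near $\gamma$ are the same for both Lagrangians, so the two broken loop spaces share a neighborhood of $\gamma$ on which the actions coincide. The segment-coincidence step you flag as the main obstacle is indeed where the content lies, and your treatment of it via the velocity bound and the implicit-function uniqueness from Proposition~\ref{p:fathi2} is sound.
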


Now, let $c=\Act_k(\gamma)=\Act(\gamma)$. By   corollary~\ref{c:hom_equiv_loc_hom} and the excision property, for each $k\geq\bar k(\Lagr_U,c)$, the inclusion
\[
\iota_k:
( (\Act_k)_c\cup\gbra\gamma , (\Act_k)_c )
\hookrightarrow
( (\Act_U)_c\cup\gbra\gamma , (\Act_U)_c )
\]
induces the local homology isomorphism
\begin{align}\label{e:incl_U*}
\iota_{k*}:
\MC_*(\Act_k,\gamma)
\toup^\simeq
\MC_*(\Act_U,\gamma).
\end{align}
For each $R$-modification $\Lagr_R$ of $\Lagr$, with $R>U$, the action $\Act_R$ coincides with $\Act_k$ on $\catU_k$. Hence, we also have an inclusion
\begin{align*} 
j_k:
( (\Act_k)_c\cup\gbra\gamma , (\Act_k)_c )
\hookrightarrow
( (\Act_R)_c\cup\gbra\gamma , (\Act_R)_c ).
\end{align*}
A priori, this inclusion might not induce an isomorphism in homology. However, we have the following statement.

\begin{lem}\label{l:loc_Ton_act}
The inclusion $j_k$  induces the homology isomorphism
\begin{align*}
j_{k*}:
\MC_*(\Act_k,\gamma)
\toup^\simeq
\MC_*(\Act_R,\gamma).
\end{align*}
\begin{proof}
Notice that,  for each $h\in\N$, we have $\catU_k\subset\catU_{hk}$. Since $h k\geq k\geq \bar k(\Lagr_U,c)$, by  corollary~\ref{c:hom_equiv_loc_hom} and the excision property, the inclusion 
\begin{align*} 
\iota_{hk}:
( (\Act_{hk})_c\cup\gbra\gamma , (\Act_{hk})_c )
\hookrightarrow
( (\Act_U)_c\cup\gbra\gamma , (\Act_U)_c )
\end{align*}
induces the  homology isomorphism
$\displaystyle \iota_{hk*}:
\MC_*(\Act_{hk},\gamma)\toup^\simeq
\MC_*(\Act_U,\gamma)$, analogously to $\iota_k$ in \eqref{e:incl_U*}. We define an inclusion $\lambda_h$ that factorizes $\iota_k$  as in  the following   diagram 
\[   
\xymatrix
{
 \dirty{((\Act_k)_c \cup \gbra{\gamma}, (\Act_k)_c)}\ 
\ar@{^{(}->}[rrr]^{\iota_k}
\ar@{^{(}->}[dd]_{\lambda_h}
&&&  
\dirty{( (\Act_U)_c \cup \gbra{\gamma}, (\Act_U)_c )}  \\\\
\dirty{((\Act_{hk})_c \cup \gbra{\gamma}, (\Act_{hk})_c)}
\ar@{^{(}->}[uurrr]_{\iota_{hk}}
}
\]
This inclusion induces the homology isomorphism 
\[\lambda_{h*}=(\iota_{hk*})^{-1}\circ\iota_{k*}:\MC_*(\Act_k,\gamma)\toup^\simeq\MC_*(\Act_{hk},\gamma).\]
Now, let us consider the $R$-modification $\Lagr_R$ of $\Lagr$. By lemma~\ref{l:germ}, for each $h\in\N$, we know that there exists an open neighborhood $\catV_{hk}\subset \catU_{hk}$ of $\gamma$ that is also an open subset of $\Lambda_{hk,\Lagr_R}$, and in particular  $\Act_{hk}|_{\catV_{hk}}  =\Act_{R,hk}|_{\catV_{hk}}  =\Act_R|_{\catV_{hk}}$. Applying once more corollary~\ref{c:hom_equiv_loc_hom} and the excision property, we obtain that the inclusion
\begin{align*} 
j_{hk}':
( (\Act_{hk}|_{\catV_{hk}})_c\cup\gbra\gamma , (\Act_{hk}|_{\catV_{hk}})_c )
\hookrightarrow
( (\Act_R)_c\cup\gbra\gamma , (\Act_R)_c ).
\end{align*}
induces an isomorphism in homology. Finally, consider the following diagram of inclusions.
\[   
\xymatrix
{
\dirty{((\Act_k)_c \cup \gbra{\gamma}, (\Act_k)_c)}
\ar@{^{(}->}[ddrrr]^{j_k}
\ar@{^{(}->}[dd]_{\lambda_h}^{(*)}
\\\\
\dirty{((\Act_{hk})_c \cup \gbra{\gamma}, (\Act_{hk})_c)}\ 
\ar@{^{(}->}[rrr]^{j_{hk}}
&&&  
\dirty{( (\Act_R)_c \cup \gbra{\gamma}, (\Act_R)_c )}  
\\\\
\dirty{( (\Act_{hk}|_{\catV_{hk}})_c\cup\gbra{\gamma},(\Act_{hk}|_{\catV_{hk}})_c )}
\ar@{^{(}->}[uurrr]_{j_{hk}'}^{(*)}
\ar@{_{(}->}[uu]^{\footnotesize
  \begin{tabular}{@{}c@{}}
    {excision} \\ 
    {inclusion} 
  \end{tabular}
}_{(*)}
}
\]
We already know that the inclusions marked with $(*)$ induce  isomorphisms in homology. Therefore,    $j_{hk}$  and $j_k$ also induce isomorphisms in homology.
\end{proof}
\end{lem}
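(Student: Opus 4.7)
The plan is to factor $j_k$ through a further refinement $\Lambda_{hk}$ and apply corollary~\ref{c:hom_equiv_loc_hom} twice --- once with respect to the $U$-modification $\Lagr_U$, once with respect to the $R$-modification $\Lagr_R$ --- bridged by the germ-matching provided by lemma~\ref{l:germ}. The local homology $\MC_*(\Act_U,\gamma)$ already admits two natural finite-dimensional models as $\MC_*(\Act_k,\gamma)$ and $\MC_*(\Act_{hk},\gamma)$, so the idea is to route both $j_k$ and an analogous inclusion built from $\Lagr_R$ through these shared models, then invoke two-out-of-three.

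Concretely I would proceed in three steps. First, fix any $h\in\N$. Since $\catU_k\subset\catU_{hk}$ and $\Act_k=\Act_{hk}|_{\catU_k}$, the inclusion of pairs
\[
\lambda_h:\cbra{(\Act_k)_c\cup\gbra\gamma,\,(\Act_k)_c}\hookrightarrow\cbra{(\Act_{hk})_c\cup\gbra\gamma,\,(\Act_{hk})_c}
\]
satisfies $\iota_k=\iota_{hk}\circ\lambda_h$; as both $\iota_{k*}$ and $\iota_{hk*}$ are isomorphisms by~\eqref{e:incl_U*} applied to $\Lambda_{k,\Lagr_U}$ and $\Lambda_{hk,\Lagr_U}$, so is $\lambda_{h*}$. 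Second, enlarge $h$ so that $hk\geq\bar k(\Lagr_R,c)$ as well, and apply lemma~\ref{l:germ} to produce a common open set $\catV_{hk}\subset\catU_{hk}$ that is also open in $\Lambda_{hk,\Lagr_R}$ and on which $\Act_{hk}=\Act_{R,hk}=\Act_R$; by excision, the inclusion of the local pair $((\Act_{hk}|_{\catV_{hk}})_c\cup\gbra\gamma,(\Act_{hk}|_{\catV_{hk}})_c)$ into $((\Act_{hk})_c\cup\gbra\gamma,(\Act_{hk})_c)$ induces an isomorphism on local homology at $\gamma$. Third, apply corollary~\ref{c:hom_equiv_loc_hom} with respect to $\Lagr_R$ (combined with excision) to conclude that the inclusion
\[
j'_{hk}:\cbra{(\Act_{hk}|_{\catV_{hk}})_c\cup\gbra\gamma,\,(\Act_{hk}|_{\catV_{hk}})_c}\hookrightarrow\cbra{(\Act_R)_c\cup\gbra\gamma,\,(\Act_R)_c}
\]
also induces an isomorphism. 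Assembling $\lambda_h$, the excision, $j'_{hk}$, and $j_{hk}$ into a commutative diagram whose outer route is $j_k$, two-out-of-three then forces $j_{k*}$ to be an isomorphism.

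The main obstacle is purely organizational: one must verify that a single integer $h$ can be chosen to meet $hk\geq\max\gbra{\bar k(\Lagr_U,c),\bar k(\Lagr_R,c)}$, that the nested open sets $\catV_{hk}\subset\catU_{hk}\subset\Lambda_{hk,\Lagr_U}$ and $\catV_{hk}\subset\Lambda_{hk,\Lagr_R}$ fit together so every arrow in the diagram genuinely comes from an inclusion of pairs, and that functoriality of singular homology then yields commutativity. Once this bookkeeping is set up, no new Morse-theoretic input is required --- every arrow either reduces to corollary~\ref{c:hom_equiv_loc_hom}, excision, or the germ identity of lemma~\ref{l:germ}.
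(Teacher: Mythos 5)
Your proposal follows the paper's proof essentially verbatim: factor $\iota_k$ through $\Lambda_{hk}$ to get $\lambda_{h*}$ an isomorphism, invoke lemma~\ref{l:germ} to produce the common open set $\catV_{hk}$, apply corollary~\ref{c:hom_equiv_loc_hom} with excision for the $R$-modification to get $j'_{hk*}$ an isomorphism, and close by a diagram chase. Your explicit note that $h$ should be enlarged so that $hk\geq\bar k(\Lagr_R,c)$ makes visible a hypothesis the paper leaves implicit, but this is the same argument.
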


\begin{rem}
As a consequence of the above lemma, we immediately obtain that the local homology groups $\MC_*(\Act_R,\gamma)$ do not depend (up to isomorphism) on the chosen real constant  $R\geq U$ and on the chosen $R$-modification $\Lagr_R$.
\end{rem}

\subsection{Convex quadratic modifications and homological vanishing}\label{s:modif_homVan} 
All the arguments of the previous section can be carried out word by word in an arbitrary period $n\in\N$. Briefly, we introduce the open set
\[ \catU_k\iter n:=\gbra{ \zeta\in\Lambda_k\iter n \biggl|\ \max_{t\in\TT n}\{|\dot\zeta(t)|_{\zeta(t)}\}<U   } \]
and we   define the \textbf{discrete mean Tonelli action} as   
\[\act n_k:=\act n|_{\catU_k\iter n}:\catU_k\iter n\to\R.\] 
Then lemmas~\ref{l:germ} and~\ref{l:loc_Ton_act} go through. Notice that the image of $\catU_k$ under the  $n^{\mathrm{th}}$-iteration map $\itmap n$ is contained in $\catU_k\iter n$. Now, consider $\infty\geq c_2>c_1=c=\Act(\gamma)$, and let us assume that $\gamma$ is not a local minimum of $\Act$. For each $R\geq U$, we have  the following  diagram of inclusions.
\begin{align*}
\xymatrix{
\dirty{( (\Act_k)_{c_1}\cup\gbra{\gamma},(\Act_k)_{c_1} )}\ 
\ar@{^{(}->}[rrr]^{\itmap n}
\ar@{^{(}->}[dd]_{\rho}
&&&
\ \dirty{( (\act n_k)_{c_1}\cup\{\gamma\iter n\},(\act n_k)_{c_1} )}
\ar@{^{(}->}[dd]^{\rho\iter n}
\\\\
\dirty{ ( (\Act_R)_{c_2},(\Act_R)_{c_1} ) }\ 
\ar@{^{(}->}[rrr]^{\itmap n}
&&&
\ \dirty{ ( (\act n_R)_{c_2},(\act n_R)_{c_1} ) }
}
\end{align*}
This latter, in turn, induces the following commutative diagram in homology.
\begin{align*}
\xymatrix{
\MC_*(\Act_k,\gamma) 
\ar[rr]^{\itmap n_*}
\ar[dd]_{\rho_{*}}
&&
\MC_*(\act n_k, \gamma\iter n) 
\ar[dd]^{\rho\iter n_{*}}
\\\\
\Hom_*( (\Act_R)_{c_2},(\Act_R)_{c_1} )  
\ar[rr]^{\itmap n_*}
&&
\Hom_* ( (\act n_R)_{c_2},(\act n_R)_{c_1} ) 
}
\end{align*}
Since $\gamma$ is not a local minimum of the action functional $\Act$, the sublevel $(\Act_R)_{c_1}$ is not empty. Hence, the homological vanishing (theorem~\ref{t:vanish}) guarantees  that for  each $[\mu]\in\MC_*(\Act_k,\gamma)$ and $p\in\N$, there is $\bar n=\bar n(\Lagr_R,[\mu],p)\in\N$ that is a power of $p$ such that  $\itmap{\bar n}_*\circ\rho_{*}[\mu]=0$. Here, we want to remark that we can choose $\bar n$  independent of $R$.

\begin{prop}\label{p:vanishIndep}
With the above assumptions, consider $[\mu]\in\MC_*(\Act_k,\gamma)$ and $p\in\N$. Then, there exist $\bar R=\bar R(\Lagr,[\mu],p)>0$ and $\bar n=\bar n(\Lagr,[\mu],p)\in\N$ that is a power of $p$ such that, for every real $R\geq\bar R$, we have $\itmap{\bar n}_*\circ\rho_{*}[\mu]=0$ in $\Hom_* ( (\act{\bar n}_R)_{c_2},(\act{\bar n}_R)_{c_1} )$. Equivalently, we have that $\itmap{\bar n}_*[\mu]\in\ker \rho\iter{\bar n}_*$.
\begin{proof}
We denote by $\Sigma(\mu)$ the set of singular simplices in $\mu$ together with all their faces, and by $\K\subset\N$ the set of nonnegative integer powers of $p$, i.e. $\K=\gbra{p^n\,|\,n\in\N\cup\gbra0}$. Notice that, for each $q$-singular simplex $\sigma\in\Sigma(\mu)$ we have $\sigma(\Delta^q)\subset\catU_k$, and in  particular 
\begin{align}\label{e:boundW1inf}
 \sup_{\bm z\in\Delta^q}\ 
 \esssup_{t\in\T} \gbra{ \abs{\der{}{t}\, \sigma(\bm z)(t) }_{\sigma(\bm z)(t)}}
  \leq U .
\end{align}
Hence, we can proceed along the line of the proof of the homological vanishing (theorem~\ref{t:vanish}): for each $q$-singular simplex $\sigma\in\Sigma(\mu)$ we get $\bar n=\bar n(\Lagr,\sigma,p)\in\K$ and, for every  $n\in\K$ greater or equal than $\bar n$, a homotopy
\[ P_{\sigma}\iter{n}:[0,1]\times \Delta^{q}\to \WC n, \]
such that
\begin{itemize}
\item[(i)]  $P_{\sigma}\iter{n}(0,\cdot)=\sigma\iter n$,
\item[(ii)] $\act n(P_{\sigma}\iter{n}(s,\bm z))<c_2$ for each $(s,\bm z)\in[0,1]\times\Delta^q$,
\item[(iii)]  $\act n(P_{\sigma}\iter{n}(1,\bm z))<c_1$ for each $\bm z \in \Delta^q$,
\item[(iv)] if $\Act(\sigma(\bm z))<c_1$ for each $\bm z\in\Delta^q$, then $P_{\sigma}\iter{n}(s,\cdot)=\sigma\iter n$ for each $s\in[0,1]$,
\item[(v)] $P_{\sigma\circ F_i}\iter{n}=P_{\sigma}\iter{n}(\cdot,F_i(\cdot))$ for each $i=0,...,q$, where $F_i:\Delta^{q-1}\to\Delta^q$ is the standard affine map onto the $i^{\mathrm{th}}$ face of $\Delta^q$.
\end{itemize}
Notice that, by \eqref{e:boundW1inf} and remark~\ref{r:BangertBd}, there exists a real constant $\bar R(\Lagr,\sigma,p)>0$ such that
\[ 
\sup_{(s,\bm z)\in[0,1]\times\Delta^q}
\esssup_{t\in\TT n} 
\gbra{ 
\abs{
\der{}{t} 
P_\sigma\iter n(s,\bm z)(t) 
}_{P_\sigma\iter n(s,\bm z)(t)}
}
\leq \bar R(\Lagr,\sigma,p).
\]
Now, we define 
\begin{align*}
\bar n&=\bar n(\Lagr,[\mu],p):=\max\gbra{\bar n(\Lagr,\sigma,p)\,|\,\sigma\in\Sigma(\mu)},\\
\bar R&=\bar R(\Lagr,[\mu],p):=\max\gbra{\bar R(\Lagr,\sigma,p)\,|\,\sigma\in\Sigma(\mu)},
\end{align*}
and we consider an $R$-modification $\Lagr_R$ of $R$, with $R\geq\bar R$. Then,  the family of homotopies $\{P_{\sigma}\iter{\bar n}\,|\,\sigma\in\Sigma(\mu)\}$ which we have built  satisfies the following properties: for each $q$-singular simplex  $\sigma\in\Sigma(\mu)$, the homotopy $P_\sigma\iter {\bar n}$ has the form
\[ P_{\sigma}\iter{\bar n}:[0,1]\times \Delta^{q}\to (\act {\bar n}_R)_{c_2}, \]
and moreover
\begin{itemize}
\item[(i)]  $P_{\sigma}\iter{\bar n}(0,\cdot)=\sigma\iter{\bar n}$,
\item[(ii)]  $P_{\sigma}\iter{\bar n}(1,\Delta^q) \subset (\act{\bar n}_R)_{c_1}$,
\item[(iii)] if $\sigma(\Delta^q)\subset (\Act_R)_{c_1}$, then $P_{\sigma}\iter{\bar n}(s,\cdot)=\sigma\iter{\bar n}$ for each $s\in[0,1]$,
\item[(iv)] $P_{\sigma\circ F_i}\iter{\bar n}=P_{\sigma}\iter{\bar n}(\cdot,F_i(\cdot))$ for each $i=0,...,q$.
\end{itemize}
As in the proof of theorem~\ref{t:vanish}, by \cite[lemma 1]{b:BK} we conclude that $\itmap{\bar n}_*\circ\rho_{*}[\mu]=0$ in $\Hom_* ( (\act{\bar n}_R)_{c_2},(\act{\bar n}_R)_{c_1} )$.
\end{proof}
\end{prop}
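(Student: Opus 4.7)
My plan is to adapt the proof of the homological vanishing result (theorem~\ref{t:vanish}) so that all the Bangert homotopies are constructed relative to the unmodified Tonelli Lagrangian $\Lagr$, and only at the very end do we check that for all sufficiently large $R$ these homotopies continue to live inside action sublevels of $\act n_R$. The key observation is that the singular simplices in $\mu$ already take values in the open set $\catU_k\subset\Lambda_k$, and in particular every loop in the image of any simplex $\sigma\in\Sigma(\mu)$ satisfies the pointwise derivative bound $|\dot\sigma(\bm z)(t)|_{\sigma(\bm z)(t)}<U$. This uniform $W^{1,\infty}$ bound is the crucial ingredient that will ultimately allow the choice of $\bar n$ to be made independently of $R$.

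First, I will denote by $\Sigma(\mu)$ the (finite) set of singular simplices appearing in the relative cycle $\mu$, together with all their iterated faces, and by $\K$ the set of nonnegative integer powers of $p$. Then, by induction on the simplicial dimension, exactly as in the proof of theorem~\ref{t:vanish}, I will construct for every $\sigma\in\Sigma(\mu)$ an integer $\bar n(\Lagr,\sigma,p)\in\K$ and, for each $n\in\K$ with $n\geq\bar n(\Lagr,\sigma,p)$, a homotopy
\[P_\sigma\iter n:[0,1]\times\Delta^q\to\WC n\]
with $P_\sigma\iter n(0,\cdot)=\sigma\iter n$, $\act n(P_\sigma\iter n(s,\bm z))<c_2$, $\act n(P_\sigma\iter n(1,\bm z))<c_1$, $P_\sigma\iter n\equiv\sigma\iter n$ whenever $\Act(\sigma(\Delta^q))\subset(-\infty,c_1)$, and the compatibility $P_{\sigma\circ F_i}\iter n=P_\sigma\iter n(\cdot,F_i(\cdot))$ on faces. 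Throughout the induction, the building block is the Bangert homotopy of lemma~\ref{l:vanish}, applied to paths that either lie in $\Sigma(\mu)$ or arise by patching previously constructed $P_{\nu}\iter n$ along proper faces, exactly as in the induction of theorem~\ref{t:vanish}.

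The essential new ingredient is remark~\ref{r:BangertBd}: because each $\sigma\in\Sigma(\mu)$ satisfies a pointwise $W^{1,\infty}$ bound (by $U$), and because each Bangert homotopy preserves such bounds (with a new, larger, but still uniform constant), at every stage of the induction the homotopies $P_\sigma\iter n$ take values in the set of loops whose derivative is essentially bounded by some finite $\bar R(\Lagr,\sigma,p)$ independent of $n$. Taking maxima over the finite set $\Sigma(\mu)$ produces
\[\bar n=\bar n(\Lagr,[\mu],p):=\max_{\sigma\in\Sigma(\mu)}\bar n(\Lagr,\sigma,p),\qquad \bar R=\bar R(\Lagr,[\mu],p):=\max_{\sigma\in\Sigma(\mu)}\bar R(\Lagr,\sigma,p).\]

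Finally, for any real $R\geq\bar R$ and any $R$-modification $\Lagr_R$, property \textbf{(M1)} says $\Lagr_R=\Lagr$ on $\{|v|_q\leq R\}$, so along every loop appearing in any $P_\sigma\iter{\bar n}$ one has $\Lagr_R=\Lagr$ pointwise; consequently the action values $\act{\bar n}_R(P_\sigma\iter{\bar n}(s,\bm z))$ coincide with $\act{\bar n}(P_\sigma\iter{\bar n}(s,\bm z))$, and the properties (i)--(iv) in the statement translate verbatim into the $\Lagr_R$ setting, yielding a family $\{P_\sigma\iter{\bar n}\}_{\sigma\in\Sigma(\mu)}$ of homotopies into the pair $((\act{\bar n}_R)_{c_2},(\act{\bar n}_R)_{c_1})$ that is compatible with the face operators. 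Then \cite[lemma~1]{b:BK} gives at once $\itmap{\bar n}_*\rho_*[\mu]=0$, and since $\itmap{\bar n}_*[\mu]$ is $\rho\iter{\bar n}$-related to $\itmap{\bar n}_*\rho_*[\mu]$ via the commutative square displayed just before the proposition, we conclude $\itmap{\bar n}_*[\mu]\in\ker\rho\iter{\bar n}_*$. The main obstacle is precisely the uniform $W^{1,\infty}$-control of the inductively built homotopies — that is why it is essential to work from the start with $\Lagr$ (so that no dependence on $R$ enters the construction) and to rely on remark~\ref{r:BangertBd} to propagate the derivative bound through each application of the Bangert construction.
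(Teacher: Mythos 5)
Your proposal follows the paper's own argument essentially verbatim: construct the homotopies $P_\sigma\iter n$ relative to the Tonelli action using the $W^{1,\infty}$-bound $U$ coming from $\catU_k$, propagate that bound through each Bangert homotopy via remark~\ref{r:BangertBd} to obtain an $R$-independent constant $\bar R(\Lagr,\sigma,p)$, take maxima over the finite set $\Sigma(\mu)$, and then invoke \textbf{(M1)} to transfer the inequalities to $\act{\bar n}_R$ for $R\geq\bar R$ before applying \cite[lemma~1]{b:BK}. No meaningful difference in approach.
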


\section{Proof of theorem~\ref{t:Conley}}\label{s:proof}

We are now ready to prove theorem~\ref{t:Conley}. Throughout the proof, we will adopt the same notation of the previous section. In particular, we will implicitly assume that the mean action functionals $\act n_R$ of the $R$-modifications of $\Lagr$, for each $n\in\N$ and $R>0$, will be defined on the connected component of contractible loops $\WC n\subset\W(\TT n;M)$. Moreover, all the homology groups that will appear from now on are assumed to have coefficients in the field $\Z_2$.

Let us fix a prime $p\in\N$. We will denote by $\K\subset\N$ the set of non-negative integer powers of $p$, i.e. $\K=\gbra{p^n\,|\,n\in\N\cup\gbra0}$. 
We will proceed by contradiction, assuming that the only contractible periodic solutions of the Euler-Lagrange system of $\Lagr$ with period in $\K$ and mean action less than $\ACT$  (the constant chosen in the statement) are
\begin{align*}
\gamma_1,...,\gamma_r.
\end{align*}
Without loss of generality, we can assume that all these  orbits have period $1=p^0$. This can be easily seen in the following way. If $p^n$ is the maximum of their basic periods (and, in particular,  they are all $p^n$-periodic) we can build a Tonelli Lagrangian $\tilde\Lagr:\T\times\Tan M\to\R$ by time-rescaling of $\Lagr$ as
\begin{align*}
\tilde\Lagr(t,q,v):=\Lagr(p^n t,q, p^{-n} v),\s\s\forall (t,q,v)\in\T\times\Tan M.
\end{align*}
For each $j\in\N$, a curve $\tilde\gamma:\R\to M$ is a $j$-periodic solution  of the Euler-Lagrange system of $\tilde\Lagr$ if and only if the reparametrized curve $\gamma:\R\to M$, given by $\gamma(t):=\tilde\gamma(p^{-n} t)$ for each $t\in\R$, is a $p^n j$-periodic solution of the Euler-Lagrange system of $\Lagr$. Moreover, $\tilde\gamma$ and $\gamma$ have the same mean action (with respect to the Lagrangians $\tilde\Lagr$ and $\Lagr$ respectively).

We recall that $N$ is the dimension of the closed manifold $M$. For each $R>0$ and $n\in\N$, the homology of the sublevel $(\act n_R)_\ACT$ is non-trivial in degree $N$, i.e.
\begin{align}\label{e:Hom_sub_neq_0}
\Hom_N((\act n_R)_\ACT)\neq 0,\s\s\forall R>0,n\in\N.
\end{align}
This is easily seen as follows. First of all, notice that the quantity in~\eqref{e:ACTconstant} is finite (due to the compactness of $M$)  and may  be interpreted in the following way: for each integer $n\in\N$, if we denote by $\iota\iter n:M \hookrightarrow \WC n$ the embedding that maps a point to the constant loop at that point, the quantity in \eqref{e:ACTconstant}  is equal to the maximum of the functional $\act n\circ\iota\iter n: M\to \R$. By our choice of the constant $\ACT$,  we have 
\[ 
\act n_R\circ\iota\iter n(q)
=
\act n \circ \iota\iter n(q) <\ACT,
\s\s
\forall q\in M, 
\]
therefore $\iota\iter n$ can be seen as a map of the form $\iota\iter n:M\hookrightarrow(\act n_R)_\ACT$. We denote by $\ev:\W(\T;M)\to M$ the evaluation map, defined by $\ev(\zeta)=\zeta(0)$ for each $\zeta\in\W(\T;M)$. Since $M$ is an $N$-dimensional closed manifold and we consider homology groups with $\Z_2$ coefficients, we have that $\Hom_N(M)$ is non trivial. Therefore, the following commutative diagram readily implies that $\iota\iter n_*$ is a monomorphism, and the claim follows.
\begin{align*}
\xymatrix
{
 & \Hom_N((\act n_R)_\ACT) \ar[dr]^{\ev_*} & \\
0\neq\Hom_N(M) \ar[ur]^{\iota\iter n_*} \ar[rr]^{ \id_{M*}}_{\simeq}  & &  \Hom_N(M)\\
}
\end{align*}

Now, we want to show  that there exists $\gamma\in\gbra{\gamma_1,...,\gamma_r}$ having mean Conley-Zehnder index  $\aiota(\gamma)$ equal to zero (see section~\ref{s:CZLetc}).  In fact, assume by contradiction that $\aiota(\gamma_v)>0$ for each $v\in\gbra{1,...,r}$.  By the first iteration inequality in~\eqref{e:indexiteration}, there exists $n\in\K$ such that 
\begin{align}\label{e:CZLind_gammaJ}
\iota(\gamma_v\iter n)>N,\s\s \forall   v\in\gbra{1,...,r}.
\end{align}
By lemma~\ref{l:apriori}, if we choose a real constant $R>\tilde R(\ACT, n)$,  we know that the only critical points of $\act n_R$ in the open sublevel $(\act n_R)_\ACT$ are  $\gamma_1\iter n,...,\gamma_r\iter n$. By~\eqref{e:CZLind_gammaJ} and corollary~\ref{c:loc_A_interval}, the local homology of $\act n_R$ at the $\gamma_v\iter n$'s vanishes in degree $N$, i.e.
\[ \MC_N(\act n_R,\gamma_v\iter n)=0,\s\s \forall  v\in\gbra{1,...,r}. \]
Then, by the Morse inequality 
\begin{align*}
\dim \Hom_N((\act n_R)_\ACT)
\leq
\sum_{v=1}^r
\dim \MC_N(\act n_R,\gamma_v\iter n),
\end{align*}
we readily obtain that $\Hom_N((\act n_R)_\ACT)=0$, which contradicts~\eqref{e:Hom_sub_neq_0}.

Hence we can assume   that $\gamma_1,...,\gamma_s$, with $1\leq s\leq r$, are  periodic solutions with mean Conley-Zehnder  index equal to zero, while $\gamma_{s+1},...,\gamma_r$ (if $s<r$) are the ones with  strictly positive mean Conley-Zehnder index. By the second iteration inequality in \eqref{e:indexiteration}, we have
\[ \iota(\gamma_v\iter n)+\nu(\gamma_v\iter n) \leq N,\s\s \forall n\in\K,\ v\in\gbra{1,...,s}.\] 
In particular $\iota(\gamma_v\iter n), \nu(\gamma_v\iter n) \in\gbra{0,...,N}$ for each $n\in\K$ and $v\in\gbra{1,...,s}$, and therefore we can find an infinite subset $\K'\subseteq\K$ such that 
\begin{align*}
(\iota(\gamma_v\iter n), \nu(\gamma_v\iter n)) =
(\iota(\gamma_v\iter{m}), \nu(\gamma_v\iter{m})) , 
\s\s
\forall n,m\in\K',\ v\in\gbra{1,...,s}.
\end{align*} 
For each $n,m\in\K'$ with  $n<m$ and  for each real  $R>\max\{ |\dot\gamma_v(t)|_{\gamma_v(t)}\,|\, t\in\T,\  v\in\gbra{1,...,s} \}$, corollary~\ref{p:loc_hom_holds_true} guarantees that the iteration map $\itmap {m/n}$ induces the  homology isomorphism
\begin{align}\label{e:iso_iter_5}
\itmap{m/n}_*:
\MC_*(\act n_R,\gamma_v\iter n)
\toup^\simeq
\MC_*(\act{m}_R,\gamma_v\iter{m}),
\s\s\forall  v\in\gbra{1,...,s}.
\end{align}

If $s<r$, by the first  iteration inequality in~\eqref{e:indexiteration},  there exists  $n\in\K'$ big enough so that  the periodic solutions $\gamma_{s+1},...,\gamma_r$ with strictly positive mean Conley-Zehnder index satisfy
\[
\iota(\gamma_v\iter n)
\geq 
n\,\aiota(\gamma_v)-N>N,\s \s\forall v\in\gbra{s+1,...r}.
\]
By corollary~\ref{c:loc_A_interval}, for each real $R>\max\{|\dot\gamma_v(t)|_{\gamma_v(t)}\,|\, t\in\T,\ v\in\gbra{s+1,...,r}\}$, we have $\MC_N(\act n_R,\gamma_v\iter n)=0$ for each $v\in\gbra{s+1,...,r}$. If $s=r$ we   just set $n:=1$. If we further take $R> \tilde R(\ACT,n)$, where $\tilde R(\ACT,n)$ is the constant given by lemma~\ref{l:apriori},  the Morse inequality
\begin{align*}
0\neq 
\dim \, 
\Hom_N( (\act n_R)_\ACT) 
\leq 
\sum_{v=1}^r 
\dim\, \MC_N(\act n_R,\gamma_v\iter n)
=
\sum_{v=1}^s 
\dim\, \MC_N(\act n_R,\gamma_v\iter n)
\end{align*}
implies that there is a $\gamma\in\gbra{\gamma_1,...,\gamma_s}$ such that
\begin{align*}
\MC_N(\act n_R,\gamma\iter n) \neq 0.
\end{align*}
At this point, let us assume without loss of generality that $1\in\K'$ (this can be achieved by time-rescaling, as we discussed  at the beginning of the proof). Then, by \eqref{e:iso_iter_5}, we have 
\begin{align}\label{e:it_iso_Loc_gamma}
\itmap n_*:
\MC_*(\Act_R,\gamma)
\toup^\simeq
\MC_*(\act n_R,\gamma\iter{n})\neq 0,
\s\s\forall  n\in\K'.
\end{align}
Now, we apply our discretization technique as in section~\ref{s:modif&loc_hom}: we choose $U>0$ as in \eqref{e:choose_U} and we get the discrete mean Tonelli action functional $\act n_k:\catU_k\iter n\to\R$, for some $k\in\N$ sufficiently big and for every $n\in\N$.  For each $R\geq U$, the homology isomorphism induced by the iteration map in~\eqref{e:it_iso_Loc_gamma} fits into the following commutative diagram
\begin{align*}
\xymatrix{
\MC_*(\Act_k,\gamma)
\ar[rr]^{\itmap n_*}
\ar[dd]_\simeq
&&
\MC_*(\act n_k,\gamma\iter n)
\ar[dd]^\simeq
\\\\
\MC_*(\Act_R, \gamma)
\ar[rr]^{\itmap n_*}_\simeq
&&
\MC_*(\act n_R,\gamma\iter n)
}
\end{align*}
In this diagram, the vertical arrows are isomorphisms induced by inclusions (see lemma~\ref{l:loc_Ton_act}). Therefore, the iteration map induces a homology isomorphism
\begin{align*}
\itmap n_*:
\MC_*(\Act_k,\gamma)
\toup^\simeq
\MC_*(\act n_k,\gamma\iter{n})\neq 0,
\s\s\forall  n\in\K'.
\end{align*}

Let $c_1=\Act(\gamma)$ and let $\epsilon>0$ be small enough so that $c_2:=c_1+\epsilon<\ACT$ and there are no $\gamma_v\in\gbra{\gamma_1,...,\gamma_r}$   with $\Act(\gamma_v)\in (c_1,c_2)$. By  lemma~\ref{l:apriori}, for every $n\in\K'$ and $R>\tilde R(\ACT,n)$, the action functional $\act n_R$ does not have any critical point with critical value in $(c_1,c_2)$. This implies that the inclusion 
\begin{align*}
((\act n_R)_{c_1} \cup \gbra\gamma, (\act n_R)_{c_1})\hookrightarrow 
((\act n_R)_{c_2}, (\act n_R)_{c_1})
\end{align*}
induces a monomorphism in homology (see \cite[theorem~4.2]{b:Ch}), and therefore the inclusion
\begin{align*}
\rho\iter n:
((\act n_k)_{c_1} \cup \gbra{\gamma}, (\act n_k)_{c_1})\hookrightarrow 
((\act n_R)_{c_2}, (\act n_R)_{c_1}),
\end{align*}
induces a monomorphism in homology as well. Summing up, for each  $R >\tilde R(\ACT,n)$ and $n\in\K'$, we have obtained the following commutative diagram.
\begin{align*}
\xymatrix{
0\neq\dirty{\MC_N(\Act_k,\gamma)}\ 
\ar[rr]^{\itmap n_*}_\simeq
\ar@{^{(}->}[dd]_{\rho\iter1_*}
&&
\ \dirty{\MC_N(\act n_k,\gamma\iter n)}
\ar@{^{(}->}[dd]^{\rho\iter n_*}
\\\\
\Hom_N((\Act_R)_{c_2}, (\Act_R)_{c_1})\ 
\ar[rr]^{\itmap n_*}
&&
\ \Hom_N((\act n_R)_{c_2}, (\act n_R)_{c_1})
}
\end{align*}
This diagram contradicts the homological vanishing (proposition~\ref{p:vanishIndep}). In fact, since the local homology group $\MC_N(\Act_k,\gamma)$ is nontrivial and $N>0$, $\gamma$ is not a local minimum of $\Act_k$. For each  nonzero $[\mu]\in\MC_N(\Act_k,\gamma)$, there exist  $\bar R=\bar R(\Lagr,[\mu],p)\in\R$ and  $\bar n=\bar n(\Lagr,[\mu],p)\in\K$ such that, for each real  $R\geq\bar R$ and for each  $n\in\K$ greater or equal than $\bar n$, we have 
\[ 
\itmap n_*\circ\rho\iter1_*[\mu]= 
\itmap{n/\bar n}_*\circ \underbrace{\itmap{\bar n}_*\circ\rho\iter1_*[\mu]}_{=0}=
0,
\]
therefore $\itmap n_*[\mu]\in\ker \rho\iter n_*$.

%-----------------------------------------------

\vspace{1cm}

\end{document}